\documentclass{amsart}
\usepackage{graphicx} % Required for inserting images
\usepackage{amssymb}
\usepackage{amsmath}
\usepackage{amsthm}
\usepackage{mathrsfs}
\usepackage{cite}
\usepackage{hyperref}
\usepackage[all]{xy}
\usepackage{tikz}
\usetikzlibrary{patterns}
\usepackage{tabularx}
\usepackage{pgfplots}
\pgfplotsset{compat=1.18}

\newtheorem{theorem}{\textbf{Theorem}}[section]

\newtheorem{proposition}[theorem]{\textbf{Proposition}}

\newtheorem{lemma}[theorem]{\textbf{Lemma}}

\hypersetup{
	colorlinks=true,
	linkcolor=blue,
	citecolor=blue
}

\numberwithin{equation}{section}

\title{Improvements of Classical Eigenvalue Inequalities I:  Quantitative Hersch-deficit  and the Joint Spectral Range on the Two-Sphere}

\date{\today}

\author{Zuoqin Wang and Qingwei Zeng}
\thanks{Partially supported by NNSFC No. 12571064.} 

\address[Z. W.]{School of Mathematical Sciences\\
	University of Science and Technology of China\\
	Hefei, 230026\\ P.R. China} 
\email{wangzuoq@ustc.edu.cn}

\address[Q. Z.]{School of Mathematical Sciences\\
	University of Science and Technology of China\\
	Hefei, 230026\\ P.R. China} 
\email{qwzeng@mail.ustc.edu.cn}

\begin{document}
	
\maketitle

\markboth{ZUOQIN WANG AND QINGWEI ZENG}{QUANTITATIVE IMPROVEMENTS OF CLASSICAL EIGENVALUE INEQUALITIES}
	
	\begin{abstract}
		For area-$4\pi$ metrics on the two-sphere, we obtain quantitative refinements of classical eigenvalue inequalities and study the joint behavior of the first two Laplace eigenvalues. We prove a sharp quadratic improvement of Hersch's reciprocal-sum inequality in terms of the first eigenvalue, together with a simultaneous quadratic estimate for the first three eigenvalues and a geometric stability result for admissible measures. We also establish a quantitative refinement of Nadirashvili's second-eigenvalue inequality with sharp exponential scale. Finally, using explicit spectral families due to Petrides, a degree argument, and a refined finite-dimensional matrix method, we obtain nontrivial inner and outer bounds for the joint range of $(\lambda_1,\lambda_2)$.  
	    %The proofs combine finite-dimensional compression of the inverse Laplacian onto the first spherical harmonics, exact spherical moment identities, perturbative arguments, and capacity estimates near the two-bubble degeneration.
\end{abstract} 

\section{Introduction}

Let $(M,g)$ be a closed connected Riemannian surface, and let
$\Delta_g=-\operatorname{div}_g\circ\nabla$ be the Laplace--Beltrami
operator. Its spectrum, counted with multiplicity, is
\begin{equation*}
	0=\lambda_0(M,g)<\lambda_1(M,g)\leq\lambda_2(M,g)\leq\cdots\nearrow\infty.
\end{equation*} 
A basic problem in spectral geometry is to understand the spectra that can
arise as the metric varies. Since the full spectral range is still poorly
understood, one typically imposes geometric constraints and seeks sharp
inequalities for finitely many eigenvalues.

For surfaces, the most natural normalization is the area. Indeed, under a
homothetic rescaling one has
\begin{equation*}
	\lambda_k(M,tg)=t^{-1}\lambda_k(M,g),\qquad
	\operatorname{Area}(M,tg)=t\operatorname{Area}(M,g),
\end{equation*}
so the quantities $\lambda_k(M,g)\operatorname{Area}(M,g)$ are scale invariant. 
Under a fixed-area constraint there is, in general, no positive lower bound
for a prescribed eigenvalue $\lambda_k$, as is seen from standard dumbbell
degenerations. The natural questions therefore concern sharp upper bounds,
rigidity and stability of extremizers, and relations among several low-lying
eigenvalues. General topological upper bounds were obtained by Yang--Yau for
the first eigenvalue and by Korevaar for higher eigenvalues
\cite{yang-yau,kor}; sharp results are known in several special settings, in
particular for the sphere and the real projective plane
\cite{hersch,li-yau,nad,klein,nay-sho,knpp,kar_real_projective_spaces}.

This is the first paper in a series in which we aim to sharpen classical
eigenvalue inequalities in a way that retains information about the joint
behavior of low-lying eigenvalues. A recurring theme is that a small deficit
in a reciprocal-sum inequality forces several eigenvalues to be simultaneously
close to their extremal values. In the present paper we study the first three
eigenvalues on the two-sphere of area $4\pi$, as well as the image of the joint
spectral map determined by the first two eigenvalues.

Let $g_{\rm rd}$ denote the round metric of area $4\pi$ on $\mathbb S^2$.
Hersch \cite{hersch} proved that every Riemannian metric $g$ on $\mathbb S^2$
of area $4\pi$ satisfies
\begin{equation}\label{Hersch}
	\frac{1}{\lambda_1(\mathbb S^2,g)}
	+\frac{1}{\lambda_2(\mathbb S^2,g)}
	+\frac{1}{\lambda_3(\mathbb S^2,g)}
	\geq \frac{3}{2},
\end{equation} 
with equality if and only if $g$ is isometric to the round metric. Since
$\lambda_1\leq\lambda_2\leq\lambda_3$, \eqref{Hersch} immediately yields the
sharp bound $\lambda_1(\mathbb S^2,g)\leq2$. A second classical result is
Nadirashvili's sharp estimate \cite{nad_second_eigenvalue_of_spehre}: if
$\operatorname{Area}(\mathbb S^2,g)=4\pi$, then 
\begin{equation}\label{second eigenvalue}
	\lambda_2(\mathbb S^2,g)<4.
\end{equation} 
The value $4$ is not attained by a smooth metric; it is approached by metrics
degenerating to a wedge of two identical round spheres of area $2\pi$. The
higher-eigenvalue picture and the structure of maximizing sequences are
discussed, for example, in \cite{knpp}. Our purpose is to quantify both of
these extremal phenomena while retaining enough information to study the joint
behavior of $\lambda_1$ and $\lambda_2$. 

It is convenient to work more generally with eigenvalues of measures in the
fixed conformal class of the round sphere. The precise definition of an
admissible measure and the associated min--max eigenvalues is recalled in
Section~\ref{sec:prelim-sphere}. Smooth area measures of conformal metrics are
included in this framework, while the measure formulation also accommodates
the concentration phenomena arising near the Nadirashvili extremal
configuration; see Kokarev \cite{kok_eigenvalues_of_measures}.  

Reciprocal sums have a longer variational history than the individual
isoperimetric inequalities. Hersch formulated a trace-type variational
principle for sums of consecutive reciprocal eigenvalues
\cite{hersch_variational}, and related reciprocal-sum estimates were developed
by Hile--Xu \cite{hilexu} and Dittmar \cite{dittmar}. More recently, Eddaoudi
\cite{eddaoudi} obtained reciprocal-sum lower bounds for spheres and closed
orientable surfaces. In a different geometric setting, He--Li--Tang
\cite{hlt} proved the Ashbaugh--Benguria conjecture for the first $d$ nonzero
Neumann eigenvalues of Euclidean domains by keeping the entire transplanted
$d$-dimensional trial space coupled and comparing its mass and energy matrices.
Li--Wang \cite{liwang} developed a quantitative version of that matrix
mechanism and, in dimension two, used the remaining matrix information to
obtain additional constraints on the joint first-two-eigenvalue image.
Our method is close in spirit to the works described above. In both settings, the key point is to keep a finite-dimensional family of trial functions coupled, rather than estimating the coordinate functions separately, and to retain the matrix information that records the directional deviation from the symmetric configuration. In our setting, this deviation is encoded by the trace-free part $A=B-\frac12 I$ of the matrix $B$ describing the action of the inverse Laplace operator on the space of degree-one spherical harmonics. The mechanism that controls this matrix, however, is different. On the sphere, the crucial estimates come from exact fourth- and sixth-order moment identities for the coordinate functions on the round sphere. In a series of works currently in preparation, we further develop this method for general surfaces and higher-dimensional manifolds, obtaining new inequalities for sums of reciprocals of eigenvalues together with corresponding quantitative stability results.

\subsection{Improvements of Hersch's inequality}

For an admissible measure $\mu$ of mass $4\pi$ on $\mathbb S^2$, we study the Hersch deficit 
\begin{equation}\label{eq:Dk-sphere-intro}
	\mathcal H(\mu):=\sum_{i=1}^3\frac1{\lambda_i(\mu)}-\frac{3}{2}.
\end{equation}
For smooth metrics $g$, we use the notation $\mathcal H(g)=\mathcal H(dv_g)$.

Our first result gives the sharp quadratic remainder of Hersch's inequality \eqref{Hersch} in terms of the first
eigenvalue alone.
\begin{theorem}\label{main_thm_sphere_first}
	Let $g$ be a Riemannian metric on $\mathbb S^2$ with
	$\operatorname{Area}(\mathbb S^2,g)=4\pi$. Then
	\begin{equation}\label{improvedHersc_first}
		\mathcal H(g)\geq\frac{9}{160}(\lambda_1(g)-2)^2.
	\end{equation}
	Moreover, both the exponent $2$ and the constant $\frac{9}{160}$ are sharp.
\end{theorem}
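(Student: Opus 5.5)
The plan is to sharpen Hersch's trace argument by keeping the whole three-dimensional trial space coupled and measuring how far the inverse Laplacian fails to preserve it. Let $\mu=dv_g$. By the Hersch balancing lemma we may precompose with a conformal automorphism so that $\int x_i\,d\mu=0$ for $i=1,2,3$, where $x_i$ are the coordinate functions of the round sphere. Conformal invariance of the Dirichlet energy gives energy matrix $E=\frac{8\pi}{3}I$. Since $\sum x_i^2=1$, the mass matrix $M_{ij}=\int x_ix_j\,d\mu$ has trace $4\pi$.

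Let $T$ be the inverse Laplacian of $\mu$, viewed as a compact self-adjoint operator on the energy space $\dot H^1$ (functions orthogonal to constants in $L^2(\mu)$). Its eigenvalues are $1/\lambda_k(\mu)$. Let $V=\operatorname{span}\{x_1,x_2,x_3\}$ and $P$ the energy-orthogonal projection onto $V$. The compression $B=PTP|_V$ is represented, in the energy-orthonormal basis $\sqrt{3/8\pi}\,x_i$, by $\frac{3}{8\pi}M$. Hence $\operatorname{tr}B=\frac32$, and Ky Fan's principle gives exactly \eqref{Hersch}. Write $B=\frac12I+A$ with $\operatorname{tr}A=0$.

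\textbf{Step 1: quantitative Ky Fan.} For the top three eigenvalues of $T$ I will prove
\begin{equation*}
\sum_{i=1}^3\frac1{\lambda_i}-\operatorname{tr}B\;\geq\;c\,\|(I-P)TP\|_{HS}^2 .
\end{equation*}
The route is to compare with the best $3$-dimensional subspace containing a tilt of $V$ in the direction of $(I-P)TV$, i.e. to compute the trace of $T$ on the graph of $\varepsilon(I-P)T|_V$ over $V$ and optimize in $\varepsilon$. The resulting constant involves the spread $\|T\|$, which is at most $1/\lambda_1$.

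\textbf{Step 2: bounding $\lambda_1$ from the deviation.} Next I will control $1/\lambda_1-\frac12$ by $\|A\|$ and $\|(I-P)TP\|$. Trivially $1/\lambda_1\geq\lambda_{\max}(B)\geq\frac12+\lambda_{\max}(A)$. The converse direction, namely that a large gap $1/\lambda_1-\frac12$ forces either $A$ or the leakage $(I-P)TP$ to be large, is the heart of the argument. For this I will use the exact fourth- and sixth-order moment identities on the round sphere,
\begin{equation*}
\int x_i^4=\tfrac{4\pi}{5},\qquad \int x_i^2x_j^2=\tfrac{4\pi}{15},
\end{equation*}
and their sixth-order analogues. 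These express $\operatorname{tr}(PT^2P)=\operatorname{tr}B^2+\|(I-P)TP\|^2$ in terms of the second moments of $\mu$ against degree-two harmonics. They also bound the component of a first eigenfunction outside $V$.

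\textbf{Step 3: optimization and sharpness.} Combining Steps 1 and 2 with $\mathcal H=\operatorname{tr}(\text{top }3)-\operatorname{tr}B$ and $\operatorname{tr}A^2$ contributions gives a quadratic form inequality in the parameters. Minimizing this form subject to a prescribed value of $\lambda_1-2$ should produce the constant $\frac{9}{160}$. For large deviations, where $\lambda_1$ is far from $2$, the inequality is weaker than a crude bound and so is automatic. To see that the exponent and the constant are sharp, I will take the conformal perturbation $\mu_t=(1+tY)\,dv_{g_{\rm rd}}$ with $Y$ an axially symmetric degree-two harmonic. Second-order perturbation theory for the triple eigenvalue $2$ yields $\lambda_1(t)-2=O(t)$ and $\mathcal H(\mu_t)=O(t^2)$, and the ratio of the two leading coefficients should equal $\frac{9}{160}$.

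The main obstacle is Step 2. There one must show that the deviation of $\lambda_1$ is not carried by eigenfunctions nearly orthogonal to $V$ in a way invisible to $B$. I would first attack it perturbatively, expanding $\mu$ near the round measure, to identify the sharp constant. I would then globalize using the moment identities together with the a priori bound $\lambda_1\leq2$.
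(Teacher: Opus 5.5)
\textbf{There is a genuine gap, and it is in Step 3.} The ingredients of Steps 1--2 cannot produce $\frac9{160}$, so minimizing the resulting quadratic form cannot give the sharp constant.

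Step 2 is not where the difficulty lies. After diagonalizing $B$ in decreasing order, interlacing $B_{ii}\le 1/\lambda_i$ and $\operatorname{tr}B=\frac32$ give at once $A_{11}\ge\frac1{\lambda_1}-\frac12-\mathcal H$. No information on the first eigenfunction is needed. Also, the moment identities do not express $\|(I-P)TP\|_{\rm HS}$ through second moments of $\mu$. Pairing $C=(I-P)TP$ with the degree-three parts of $x_i(x^THx)$ only gives the one-sided bound $\|A\|_{\rm HS}\le 2\|C\|_{\rm HS}$.

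Now chain the estimates $\mathcal H\ge c\|C\|_{\rm HS}^2$, $\|C\|_{\rm HS}\ge\frac12\|A\|_{\rm HS}$, $\|A\|_{\rm HS}^2\ge\frac32A_{11}^2$ and $A_{11}\approx\frac{2-\lambda_1}4$. Near the round point this gives $\mathcal H\gtrsim\frac{3c}{128}(2-\lambda_1)^2$, so you need $c=\frac{12}5$. Your Step 1 is true, but a constant controlled only through $\|T\|\le1/\lambda_1$ is at best $1/\|T\|\approx 2$. Already for $T=\mathrm{diag}(r,0)$ and a slightly tilted line, the ratio of the Ky Fan deficit to $\|(I-P)TP\|^2$ tends to $1/r$. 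So $c\approx2$ gives only $\frac3{64}$, and the available estimate $\|C\|_{\rm HS}^2\le 2\lambda_1^{-1}\mathcal H$ gives only $\frac3{128}$.

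The extremal family shows exactly what is missing. Take $\mu_\varepsilon=(1+\varepsilon(x_1^2-\frac13))\,dv_{g_{\rm rd}}$. Then $A=\varepsilon\,\mathrm{diag}(\frac2{15},-\frac1{15},-\frac1{15})$ and $\|C\|_{\rm HS}^2=\frac{\varepsilon^2}{150}$, so every link of your chain except Step 1 is an equality to leading order. Meanwhile $\mathcal H=\frac2{125}\varepsilon^2+O(\varepsilon^3)=\frac{12}5\|C\|_{\rm HS}^2+O(\varepsilon^3)$. The factor $\frac{12}5=(\frac12-\frac1{12})^{-1}$ records that the leakage lies in degree-three harmonics, on which $T$ acts like $\frac1{12}$. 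Nothing in your plan supplies this information globally. The trivial region (from $\lambda_2\le4$, $\lambda_3\le6$) covers only roughly $\lambda_1\le\frac67$, so ``perturbative, then globalize'' has no mechanism on $[\frac67,2)$.

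\textbf{The missing idea.} The paper supplies the degree-three information through explicit trial functions rather than an abstract bound on $T$.
\begin{itemize}
\item Let $H=\mathrm{diag}(\frac23,-\frac13,-\frac13)$ and let $U_i$ be the normalized degree-three part of $x_i(x^THx)$. Take $\phi_1\propto e_1+tU_1$ and $\phi_j\propto e_j+sU_j$ for $j=2,3$.
\item These are Dirichlet-orthonormal with explicitly computable norms, and $\sum_i\int\phi_i^2\,d\mu=\frac1{4\pi}\int F_{t,s}(x_1^2)\,d\mu$ for an explicit cubic $F_{t,s}$.
\item Ky Fan gives $\mathcal H+\frac32\ge\frac1{4\pi}\int F_{t,s}(x_1^2)\,d\mu-\mathcal E_{t,s}$. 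The mean correction $\mathcal E_{t,s}$ is bounded by a multiple of $\mathcal H/\lambda_1$ via $\|C\|_{\rm HS}^2\le2\lambda_1^{-1}\mathcal H$. This error estimate is the only place your Step 1 is needed.
\item Choosing $s=s(t)$ with $F''_{t,s}(1)=0$ makes $F_{t,s}$ convex on $[0,1]$ and increasing on $[\frac13,1]$. Jensen then reduces everything to the single moment $\frac1{4\pi}\int x_1^2\,d\mu=\frac23B_{11}$, which your Step 2 inequality bounds from below.
\item With $t=\frac9{20}(2-\lambda_1)$ and the contradiction hypothesis $\mathcal H<\frac9{160}(\lambda_1-2)^2$, an explicit one-variable polynomial inequality closes the argument for $\frac67\le\lambda_1<2$.
\end{itemize}

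Your sharpness argument is the paper's. Just fix the sign of the perturbation so that $\lambda_1$ is simple, $\lambda_1=2-\frac8{15}\varepsilon+O(\varepsilon^2)$, which gives $\frac{2/125}{(8/15)^2}=\frac9{160}$. The opposite sign gives a double eigenvalue with slope $\frac4{15}$ and ratio $\frac9{40}$.
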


We next obtain a global remainder that simultaneously controls the deviations
of the first three eigenvalues from the round value.
\begin{theorem}\label{main_thm_sphere}
	Let $g$ be a Riemannian metric on $\mathbb S^2$ with
	$\operatorname{Area}(\mathbb S^2,g)=4\pi$. Then
	\begin{equation}\label{improvedHersc}
		\mathcal H(g)\geq\frac{1}{183}\sum_{i=1}^3(\lambda_i(g)-2)^2.
	\end{equation}
	Moreover, the exponent $2$ is sharp.
\end{theorem}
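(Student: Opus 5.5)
Throughout, $\lambda_i=\lambda_i(g)$ and $dv=dv_g$. The plan is to run Hersch's argument with the three coordinate functions kept coupled as a matrix. By the Hersch balancing lemma we first compose with a conformal automorphism $\phi$ of $\mathbb S^2$ so that the coordinate functions $x_1,x_2,x_3$ satisfy $\int x_j\,dv=0$ for the pulled-back measure. Set $V=\operatorname{span}\{x_1,x_2,x_3\}$. Conformal invariance of the Dirichlet energy gives $\int|\nabla x_j|^2\,dv=\int_{\mathbb S^2}|\nabla x_j|^2_{\rm rd}=8\pi/3$, and $\int\nabla x_i\cdot\nabla x_j=\frac{8\pi}{3}\delta_{ij}$. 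Let $G=\Delta_g^{-1}$ act on mean-zero functions and set
\[
B_{ij}=\frac{3}{8\pi}\,\langle\nabla x_i,\nabla G x_j\rangle_{L^2(dv)} .
\]
Then $B$ is the compression of $\Delta_g^{-1}$ onto $V$, normalized so that for the round metric $B=\frac12 I$.

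\textbf{Trace inequality.} The variational principle for reciprocal sums (Hersch/Ky Fan, applied to $\Delta_g^{-1}$ in the energy inner product) gives
\[
\sum_{i=1}^3\frac1{\lambda_i}\ \ge\ \operatorname{tr}B .
\]
Write $A=B-\frac12 I$. Then $\mathcal H(g)\ge\operatorname{tr}A$, together with an additional \emph{Schur-complement gain}. Decompose $\Delta_g^{-1}$ in the energy inner product along $V\oplus V^\perp$. By the exact Ky Fan inequality for the top three eigenvalues, $\sum_i 1/\lambda_i-\operatorname{tr}B$ is bounded below by a quadratic form in the off-diagonal block $C$ (mapping $V\to V^\perp$), divided by the gap between the top block and the rest.

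\textbf{Two mechanisms.} These pieces are combined as follows.
\begin{enumerate}
\item The moment identities on the round sphere (exact fourth- and sixth-order integrals of $x_i x_j x_k x_l$) together with $\sum x_j^2=1$ control $\operatorname{tr}A$ from below by $c|A|^2$. This is the quadratic improvement coming from the symmetric configuration: Hersch's lower bound $\operatorname{tr}B\ge\frac32$ is derived from Cauchy--Schwarz applied to $\int|\nabla x_j|^2\cdot\int x_j^2\ge\cdots$, and the defect in that Cauchy--Schwarz step is quadratic in $A$.
\item One relates $\sum(\lambda_i-2)^2$ to $|A|^2+|C|^2$. The eigenvalues of the full $3\times3$ compression plus the leakage $C$ determine $1/\lambda_1,1/\lambda_2,1/\lambda_3$ up to second order. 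Heuristically, $(1/\lambda_i-\frac12)$ equals the eigenvalues of $A$ plus $O(|C|^2)$. Since $\lambda_i\le\lambda_3$ and $\lambda_i$ are bounded above in the relevant regime, $|\lambda_i-2|\lesssim|1/\lambda_i-\frac12|$, with constants depending on an a priori upper bound for $\lambda_3$ (Korevaar-type, or from $\mathcal H$ being small).
\end{enumerate}

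\textbf{Regimes.} Split into two regimes. In the \emph{far regime}, where $\mathcal H(g)\ge\delta_0$ or some $\lambda_i$ is large, the inequality is trivial. This uses $\lambda_1\le2$, $\lambda_2<4$ (Nadirashvili), and $\sum_i(\lambda_i-2)^2$ being bounded when $\mathcal H$ is bounded below against $1/\lambda_3$. Some care is needed since $\lambda_3$ may be large; this is handled by noting $(\lambda_3-2)^2\lesssim\lambda_3^2$ while the deficit contains $\frac1{\lambda_1}+\frac1{\lambda_2}-\frac32+\dots$. This step is where a weak constant like $1/183$ arises. In the \emph{near regime} we carry out the perturbative analysis above.

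\textbf{Main obstacle and sharpness.} The main obstacle is item 1: obtaining a coercive bound $\operatorname{tr}A\ge c|A|^2$ (or of the full deficit against $|A|^2+|C|^2$) that is uniform over all balanced metrics. This requires the sixth-order moment identity to control how the traceless part of $B$ is weighted by the density $dv/dv_{\rm rd}$. I would first attempt to write $\operatorname{tr}B-\frac32$ as an exact quadratic expression in the density minus one projected onto degree-two harmonics, and then bound $|A|$ by that projection. Sharpness of the exponent $2$ follows from smooth conformal perturbations $g_t=(1+t h)g_{\rm rd}$ with $h$ a degree-two harmonic. For these, $\lambda_i-2$ is linear in $t$ by first-order perturbation, while $\mathcal H(g_t)=O(t^2)$ because the round metric is critical for the Hersch functional.
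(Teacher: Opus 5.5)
\textbf{The central mechanism fails.} With your normalization, $B_{ij}=\frac{3}{8\pi}\langle\nabla x_i,\nabla Gx_j\rangle=\frac{3}{8\pi}\int x_ix_j\,dv$. Hence $\operatorname{tr}B=\frac{3}{8\pi}\int|x|^2\,dv=\frac32$ \emph{identically} for every balanced metric of area $4\pi$, so $\operatorname{tr}A\equiv0$. Hersch's inequality is just the interlacing $B_{ii}\le\lambda_i^{-1}$ (in an ordered eigenbasis of $B$) summed against this identity. There is no Cauchy--Schwarz defect to exploit, and a bound $\operatorname{tr}A\ge c|A|^2$ would force $A=0$. Your plan to expand $\operatorname{tr}B-\frac32$ quadratically in the density is therefore empty. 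Your parenthetical target (the deficit against $|A|^2+|C|^2$) is the right one, but you give no mechanism for the $|A|^2$ part.

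The missing idea is that the off-diagonal block $C$ \emph{controls} $A$. The paper writes $A=\frac{5}{8\pi}\int xx^T\,d\nu$ with $d\nu=d\mu-2x^TBx\,dv_{g_{\rm rd}}=\sum_ix_i\,d\eta_i$ and $\int f\,d\eta_i=(Cx_i,f)_D$. It then tests against $x_i(x^THx)$. Its projection $\frac25(Hx)_i$ onto $E$ (computed from fourth moments) is invisible to $Cx_i\in E^\perp$, and the norm of the degree-three remainder is computed from sixth moments. This gives $\|A\|_{\rm HS}\le2\|C\|_{\rm HS}$ (Proposition~\ref{cor:mx-A-C}).

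Your Schur-complement gain ``divided by the gap'' is also problematic, since no uniform spectral gap is available. The paper uses Ky Fan for $K_\mu^2$ instead: $\|C\|_{\rm HS}^2=\operatorname{tr}(PK_\mu^2P)-\operatorname{tr}B^2\le\sum_i(\lambda_i^{-2}-B_{ii}^2)\le2\lambda_1^{-1}\mathcal H$, which needs no gap. Your heuristic $O(|C|^2)$ relation should be replaced by the exact identity $\lambda_i^{-1}-\frac12=A_{ii}+\epsilon_i$ with $\epsilon_i=\lambda_i^{-1}-B_{ii}\ge0$ and $\sum_i\epsilon_i=\mathcal H$. These three facts already give $\sum_i(\lambda_i^{-1}-\frac12)^2\le16\lambda_1^{-1}\mathcal H+2\mathcal H^2$. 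Combined with $\lambda_i\le2i$ and $\lambda_1^{-1}\le\frac32+\mathcal H$, this yields the theorem with an explicit but worse constant.

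\textbf{The near regime is not a proof.} Small $\mathcal H$ gives no a priori closeness of $g$ to the round metric in any topology where eigenvalue perturbation theory applies. The best available is $W^{-1,2}$-closeness of the balanced measure (Theorem~\ref{thm:sharp-reciprocal-hierarchy}), and that is itself derived from the matrix estimates above. So your perturbative expansion cannot be made uniform, and no explicit constant such as $\frac1{183}$ can come out of it.

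The paper instead proves a global inequality, Proposition~\ref{Prop:Hlowerbdbytrace}. It uses the Dirichlet-orthogonal trial triple $e_i+tU_i(A)$ (with $A$ diagonal), Cauchy--Schwarz for the form $(K_\mu\cdot,\cdot)_D$, and the moment identity $\sum_i(Ce_i,U_i(A))_D=\frac12\operatorname{tr}A^2$. The result is $\mathcal H\ge\frac{(\operatorname{tr}A^2)^2}{2(\operatorname{tr}A^2+\frac47\operatorname{tr}A^3)}$. The constant $183$ then comes from interlacing ($A_{22}\le-u$, $A_{33}\le-s$), Schur convexity (Lemma~\ref{lem:Psi-Schur}), $\lambda_i\le2i$, and a four-case split.

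Your sharpness argument is correct and is essentially the paper's. A degree-two conformal perturbation such as $(1+\varepsilon(x_1^2-\frac13))g_{\rm rd}$ has nonzero first-order splitting, while the first variation of $\mathcal H$ vanishes.
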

In contrast to Theorem \ref{main_thm_sphere_first}, the coefficient  $\frac{1}{183}$ here is not sharp. 

The Hersch deficit also controls the distance of the measure from the
conformal orbit of the round measure, giving a quantitative geometric form of
Hersch rigidity and extending the stability perspective of
\cite[Theorem 1]{knps}. 
\begin{theorem}\label{thm:sharp-reciprocal-hierarchy}
	Let $\mu$ be an admissible measure on $(\mathbb S^2,[g_{\rm rd}])$ with
	$\mu(\mathbb S^2)=4\pi$ and $\mathcal H(\mu)\leq\frac12$. Then there exists a
	conformal automorphism $\Phi$ such that
	\begin{equation}\label{eq:hierarchy-geo-intro}
		\mathcal H(\mu)\geq\frac{1}{344}
		\|\Phi_*\mu-dv_{g_{\rm rd}}\|_{W^{-1,2}}^2.
	\end{equation}
	The geometric exponent $2$ is sharp.
\end{theorem}

Here $W^{-1,2}$ stands for the dual norm of Sobolev norm $W^{1,2}(\mathbb{S}^2,g_{\rm rd})$. Since $\mathcal{H}(\mu)\leq3(\frac{1}{\lambda_1}-2)$, the sharpness of exponent $2$ here follows the sharpness of \cite[Theorem 1]{knps}. 

\subsection{Distribution of the first two eigenvalues}

Next we give a quantitative improvement of Nadirashvili's inequality \eqref{second eigenvalue}. 
Near the Nadirashvili extremal regime, where the conformally normalized measures converge to
$\mu_0=\frac12dv_{g_{\rm rd}}+2\pi\delta_p$, one has simultaneously
$\lambda_2\to4$ and $\lambda_1\to0$. Our next result quantifies the relation between these two asymptotic behaviors and identifies the sharp exponential scale governing the approach of $\lambda_2$ to $4$.

\begin{theorem}\label{main_stability_second}
	For every admissible measure $\mu$ on $(\mathbb S^2,[g_{\rm rd}])$ with
	$\mu(\mathbb S^2)=4\pi$,
	\begin{equation}\label{improvedNadir}
		\lambda_2(\mu)<4-\frac{e}{25}
		\exp\left(-\frac{2}{\lambda_1(\mu)}\right).
	\end{equation}
	Moreover, the constant $2$ in the exponential is sharp.
\end{theorem}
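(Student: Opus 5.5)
We prove the inequality and its sharpness separately; the sharpness comes from an explicit two-bubble family.

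\emph{Reduction to a test function.} By min--max, $\lambda_2(\mu)\le \max_{u\in V}R_\mu(u)$ for any two-dimensional space $V$ orthogonal to constants in $L^2(\mu)$, where $R_\mu$ is the Rayleigh quotient. We follow Nadirashvili's scheme and argue by contradiction. Suppose $\lambda_2(\mu)\ge 4-\varepsilon$ with $\varepsilon:=\frac{e}{25}\exp(-2/\lambda_1)$. Using a conformal automorphism and the topological (degree) argument of Nadirashvili and Petrides, we find a hemisphere $D$ and a half-sphere reflection with the following property: the pulled-back coordinate functions $x_1,x_2,x_3$ of a folded conformal map $\mathbb S^2\to\mathbb S^2$ are $\mu$-orthogonal to constants and also to a first eigenfunction $f_1$. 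Their Dirichlet energy is at most $2\cdot 4\pi\cdot$(area of a hemisphere) $=8\pi$. Their $\mu$-mass is $\int\sum x_i^2\,d\mu=4\pi$. This gives the test bound $\lambda_2\le 4$. Strictness in Nadirashvili's argument comes from a defect in this equality, and the work is to quantify that defect in terms of $\lambda_1$.

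\emph{Quantifying the defect via $\lambda_1$.} The key observation is that $\lambda_2$ close to $4$ forces $\mu$ to be close to $\mu_0=\frac12 dv_{g_{\rm rd}}+2\pi\delta_p$ in the conformal sense. In that regime, a smaller $\lambda_1$ means the bubble is more concentrated, and a larger $\lambda_1$ means more smearing. We quantify this through the following steps.

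First, build a logarithmic cutoff around the concentration point $p$ with inner radius $r$ and outer radius $R$. Its capacity is $2\pi/\log(R/r)$. Using this function orthogonalized against constants as a test function gives
\[
\lambda_1\lesssim \frac{2}{\log(R/r)},
\]
so $r\gtrsim R\,e^{-2/\lambda_1}$. In other words, the mass near $p$ cannot concentrate faster than the scale $e^{-2/\lambda_1}$.

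Second, show that the folded-map test functions lose energy of order $r$-squared-area relative to $8\pi$. The fold makes the map non-conformal on a region of that size, and this region carries a definite mass proportion. This yields $\lambda_2\le 4-c\,r^{\alpha}$.

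Third, combine the two bounds, choosing the radii optimally (taking $R=1$). The exponent must come out exactly $2/\lambda_1$. To get this, the energy gain has to be linear in the conformal scale parameter $t$, with $t\sim e^{-2/\lambda_1}$. The constant $e/25$ then comes from optimizing a one-variable function of the form $s\,e^{-s}$, which produces the factor $e$.

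\emph{Sharpness.} Take Petrides' explicit family of two round spheres of area $2\pi$ glued by a neck of conformal size $\delta$. For this family, $\lambda_1\sim 2/\log(1/\delta)$ and $4-\lambda_2\sim C\delta$. Hence $4-\lambda_2\approx C\exp(-2/\lambda_1)$, so no constant larger than $2$ can replace $2$ in the exponent.

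\emph{Main obstacle.} The hard step is the second one, the linear-in-scale energy deficit. The second-order matrix method of the paper (the trace-free part $A$) has to be run near the degenerate two-bubble configuration. There the $3\times3$ mass matrix becomes nearly singular in one direction, and we must show that the deficit is of first order, not second order, in the scale. We would first try to compute the deficit exactly on $\mu_0$ perturbed by dilation with parameter $t$, and then use a stability argument to extend the bound to general $\mu$.
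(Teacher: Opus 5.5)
You have the right scalings: a bubble at scale $t$, a logarithmic cutoff forcing $t\gtrsim e^{-2/\lambda_1}$, a deficit linear in $t$, and Petrides' family with $\delta=\varepsilon^2$ for sharpness. But there is a genuine gap exactly where the proof lives. Your first step bounds $\lambda_1$ by the energy $2\pi/\log(R/r)$ of a cutoff divided by its $\mu$-variance. For this you need a \emph{quantitative} statement: with an error vanishing at a controlled rate as $d=4-\lambda_2(\mu)\to0$, half of the mass of a conformal image of $\mu$ lies within radius comparable to $r$ of $p$ and half lies away from it. Moreover $r$ must be tied to the scale at which the folding argument sees its deficit. A fixed fraction $\alpha\neq\frac12$ would only give $e^{-c/\lambda_1}$ with $c=1/(2\alpha(1-\alpha))>2$. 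Saying ``$\mu$ is close to $\mu_0$ in the conformal sense'' is qualitative, and nothing in your plan produces these scales or links them.

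The paper gets this from the quantitative stability estimate of \cite[Proposition~5.1]{knps}. After a conformal automorphism $\Phi$ there is a cap $Z=Z_r(p)$ with $\mathrm{Area}(Z)\le\pi d$ and $\|\nu_Z-\lambda_2\Phi_*\mu\|_{((C^0)\cap W^{1,2})^*}\le4\pi\sqrt{3d}$, where $\nu_Z=|dR_Z|^2dv_{g_{\rm rd}}$. Because this is a weak dual norm, one tests $\lambda_1$ with a function whose $\nu_Z$-integrals are explicit. The paper takes the odd function $f$ equal to the logarithmic profile $h$ (from $\partial Z$ to the equator) outside $Z$ and to $-h\circ\tau$ inside. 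For it, $\int f\,d\nu_Z=0$, $\|df\|_{L^2}^2=-4\pi/\log\tan\frac{r}{2}$, and $\int f^2\,d\nu_Z=4\int_{\mathbb S^2\setminus Z}h^2\,dv_{g_{\rm rd}}$. Since $f$ and $f^2$ have bounded $(C^0)\cap W^{1,2}$ norms, passing to $\Phi_*\mu$ costs only $O(\sqrt d)$. Together with $-\log\tan\frac{r}{2}\ge\frac12\log\frac{4-d}{d}$ this yields $\lambda_1^{-1}\ge\frac12\log\frac{4-d}{d}-\log2-\frac12R(d)\log\frac{4-d}{d}$, with $R(d)\log\frac{1}{d}\to0$. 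This capacity comparison across the fold is the missing idea.

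Conversely, the step you single out as the main obstacle is not one. The fold is across a small cap, not a hemisphere, and $R_Z$ is (anti)conformal on each side, so there is no non-conformal region. Hence $\int|dR_Z|^2dv_{g_{\rm rd}}=4(4\pi-\mathrm{Area}(Z))$ exactly, while $\sum_i(x_i\circ R_Z)^2=1$ gives mass exactly $4\pi$. So $\mathrm{Area}(Z)\le\pi d$ at once, and the reflected bulk inside $Z$ sits at scale about $r^2\lesssim d$; that is your linear deficit. Your own accounting, energy $8\pi$ against mass $4\pi$, would give $\lambda_2\le2$.

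The tool you propose for this step, the matrix method built on $A=B-\frac12I$, cannot work here. It is a perturbation theory about the round point, and along Petrides' balanced family $B\to\mathrm{diag}(0,0,\frac32)$. Its only a priori control, $\|C\|_{\rm HS}^2\le2\lambda_1^{-1}\mathcal H(\mu)$, is of order at least $\lambda_1^{-2}$ there, since $\mathcal H(\mu)\ge\lambda_1^{-1}-\frac{13}{12}$. Even the refined bound of Theorem~\ref{thm:uniform-directional-bound} says nothing beyond $\lambda_2<4$ once $\lambda_1$ drops below roughly $0.846$. An exact computation on dilations of $\mu_0$ followed by an unspecified ``stability argument'' would still need an estimate like the one above to reach arbitrary $\mu$.

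Two smaller points. First, the constant $e/25$ does not come from optimizing $se^{-s}$. For $d\le\frac{1}{25}$ the paper checks $d\,e^{2/\lambda_1}\ge\frac{4-d}{4}\bigl(\frac{4-d}{d}\bigr)^{-R(d)}\ge\frac{99}{100}\,100^{-1199/2500}>\frac{e}{25}$, using $R(d)\le2.1\sqrt d+1.49d$. For $d\ge\frac{1}{25}$ it uses $\lambda_1\le2$, so $\frac{e}{25}e^{-2/\lambda_1}\le\frac{1}{25}\le d$. Second, sharpness means the exponent $2$ cannot be \emph{lowered}; a larger exponent only weakens the inequality. Your sentence about ``no constant larger than $2$'' has the direction reversed.
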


The second main purpose of this paper is to study the joint spectral range 
\begin{equation*}
	\mathscr R:=\{(\lambda_1(\mu),\lambda_2(\mu)):
	\mu\text{ admissible},\ \mu(\mathbb S^2)=4\pi\}.
\end{equation*} 
Petrides constructed in \cite{pet25} a family of Riemannian metrics on $\mathbb S^2$ with area $4\pi$ that connects $(0,4)$ to $(2,2)$. 
In Proposition~\ref{measures of multiplicity 2} we employ irreducible representations of the symmetry group $S_3$ to construct a family of 
admissible measures of mass $4\pi$ that realize the point $(\lambda,\lambda)$, $\lambda \in (0,2]$. %Thus the diagonal segment ending at the round point $(2,2)$ is contained in the joint spectral range (with the origin occurring only as a limiting point).
Using a  convex interpolation of measures and a degree argument, we are able to obtain in Section~\ref{sec:joint-range}  an
explicit inner region $\Omega$ in $\mathscr R$. On the other hand, the inequalities above and the refined matrix estimate of Section~\ref{sec:first-two-refined} provide
outer restrictions. 

Figure~\ref{fig:joint-region-intro} summarizes these two sides of the picture.
The region $\Omega$ is proved in Proposition~\ref{imageunderred} to lie in the
joint spectral range. The region $\widetilde\Omega$ between the constructed
inner family and the available outer bounds is not decided by the present
results. The pink hatched region is rigorously excluded from the joint spectral
range by Proposition~\ref{prop:finite-scale-capacity} and
Theorem~\ref{thm:uniform-directional-bound}. The thick blue curve represents
Theorem~\ref{thm:uniform-directional-bound}; the green curve is obtained by
plotting the numerical values in Table~\ref{Numerical result of improved nad},
which come from Proposition~\ref{prop:finite-scale-capacity}; and the red curve
is the Petrides family of
Section~\ref{subsec:petrides-double-bubble}. The thick purple diagonal is the
family furnished by Proposition~\ref{measures of multiplicity 2}, while the
gray dashed horizontal line is Nadirashvili's bound $\lambda_2<4$. 

\begin{figure}[ht]
	\centering
	\begin{tikzpicture}[xscale=4, yscale=2]
		% Confirmed spectral region: Proposition 6.4.
		\fill[blue!20]
		(0,0) -- (2,2) --
		plot[smooth] coordinates {
			(2.000000,2.000000) (1.923696,2.039744) (1.817110,2.101145)
			(1.661246,2.205839) (1.462301,2.372995) (1.233750,2.629891)
			(0.992418,3.004651) (0.847537,3.282707) (0.704537,3.568889)
			(0.609812,3.737477) (0.513863,3.868945) (0.438650,3.937623)
			(0.355283,3.980276) (0.323300,3.989064) (0.000000,4.000000)} -- cycle;
		\node[black,align=center] at (0.62,2.12)
		{$\Omega$\\[-1pt]{\scriptsize Proposition~\ref{imageunderred}}};
		
		% Region not decided by the present results.  The outer boundary follows
		% Proposition 5.1 up to its intersection with Theorem 7.1, and then Theorem 7.1.
		\fill[yellow!20]
		(0.000000,4.000000) --
		plot[smooth] coordinates {
			(0.323300,3.989064) (0.355283,3.980276) (0.438650,3.937623)
			(0.513863,3.868945) (0.609812,3.737477) (0.704537,3.568889)
			(0.847537,3.282707) (0.992418,3.004651) (1.233750,2.629891)
			(1.462301,2.372995) (1.661246,2.205839) (1.817110,2.101145)
			(1.923696,2.039744) (2.000000,2.000000)} --
		plot[smooth] coordinates {
			(1.951921,2.024708) (1.903841,2.050834) (1.855762,2.078508)
			(1.807683,2.107880) (1.759603,2.139125) (1.711524,2.172443)
			(1.663445,2.208067) (1.615365,2.246271) (1.567286,2.287378)
			(1.519207,2.331773) (1.471127,2.379916) (1.423048,2.432361)
			(1.374969,2.489783) (1.326890,2.553014) (1.278810,2.623083)
			(1.230731,2.701287) (1.182652,2.789277) (1.134572,2.889190)
			(1.086493,3.003839) (1.038414,3.137009) (0.990334,3.293905)
			(0.942255,3.481898) (0.894176,3.711779) (0.848997,3.980538)} --
		plot[smooth] coordinates {
			(0.848997,3.980539) (0.800000,3.98211690) (0.750000,3.98381051)
			(0.700000,3.98558425) (0.650000,3.98742978) (0.600000,3.98933134)
			(0.550000,3.99126220) (0.500000,3.99318020) (0.450000,3.99502263)
			(0.400000,3.99670247) (0.350000,3.99811136) (0.300000,3.99914061)
			(0.250000,3.99973407) (0.200000,3.99995892) (0.150000,3.99999842)
			(0.100000,4.00000000) (0.050000,4.00000000) (0.000000,4.00000000)} -- cycle;
		\node[black] at (0.78,3.68) {\scriptsize $\widetilde\Omega$};
		
		% Region rigorously excluded by Proposition 5.1 and Theorem 7.1.
		% Pink hatching replaces the previous solid pale-red fill.
		\path[pattern=north east lines,pattern color=magenta!45]
		(0.000000,4.000000) -- (1,4) --
		plot[samples=80,domain=1:2] (\x,{2/(1.5-1/\x)}) --
		plot[smooth] coordinates {
			(1.951921,2.024708) (1.903841,2.050834) (1.855762,2.078508)
			(1.807683,2.107880) (1.759603,2.139125) (1.711524,2.172443)
			(1.663445,2.208067) (1.615365,2.246271) (1.567286,2.287378)
			(1.519207,2.331773) (1.471127,2.379916) (1.423048,2.432361)
			(1.374969,2.489783) (1.326890,2.553014) (1.278810,2.623083)
			(1.230731,2.701287) (1.182652,2.789277) (1.134572,2.889190)
			(1.086493,3.003839) (1.038414,3.137009) (0.990334,3.293905)
			(0.942255,3.481898) (0.894176,3.711779) (0.848997,3.980538)} --
		plot[smooth] coordinates {
			(0.848997,3.980539) (0.800000,3.98211690) (0.750000,3.98381051)
			(0.700000,3.98558425) (0.650000,3.98742978) (0.600000,3.98933134)
			(0.550000,3.99126220) (0.500000,3.99318020) (0.450000,3.99502263)
			(0.400000,3.99670247) (0.350000,3.99811136) (0.300000,3.99914061)
			(0.250000,3.99973407) (0.200000,3.99995892) (0.150000,3.99999842)
			(0.100000,4.00000000) (0.050000,4.00000000) (0.000000,4.00000000)} -- cycle;
		
		\draw[->] (0,0) -- (2.18,0) node[right] {$\lambda_1$};
		\draw[dashed,blue] (0,0) -- (0,4);
		\draw[->,blue] (0,4) -- (0,4.30) node[above] {$\lambda_2$};
		
		% Nadirashvili horizontal bound, now with an explicit pointer.
		\draw[thick,gray,dashed] (0,4) -- (2.10,4);
		\node[gray,anchor=west] (nadlabel) at (1.46,4.3)
		{\scriptsize Nadirashvili};
		\draw[->,gray] (nadlabel.west) -- (1.3,4.03);
		
		% Nontrivial diagonal family: Proposition 6.1.
		\draw[very thick,magenta!70!black] (0,0) -- (2,2);
		\node[magenta!70!black,align=left,anchor=west] (diaglabel) at (1.28,0.38)
		{\scriptsize  
			 Proposition~\ref{measures of multiplicity 2}};
		\draw[->,magenta!70!black,thick] (diaglabel.west) -- (1.12,1.12);
		
		% Proposition 5.1: numerical curve obtained by plotting Table 1.
		\draw[thick,green!40!black] plot[smooth] coordinates {
			(0.000000,4.00000000) (0.050000,4.00000000) (0.100000,4.00000000)
			(0.150000,3.99999842) (0.200000,3.99995892) (0.250000,3.99973407)
			(0.300000,3.99914061) (0.350000,3.99811136) (0.400000,3.99670247)
			(0.450000,3.99502263) (0.500000,3.99318020) (0.550000,3.99126220)
			(0.600000,3.98933134) (0.650000,3.98742978) (0.700000,3.98558425)
			(0.750000,3.98381051) (0.800000,3.98211690) (0.850000,3.98050677)
			(0.900000,3.97898027) (0.950000,3.97753549) (1.000000,3.97616933)
			(1.050000,3.97487798) (1.100000,3.97365732) (1.150000,3.97250310)
			(1.200000,3.97141114) (1.250000,3.97037739) (1.300000,3.96939799)
			(1.350000,3.96846929) (1.400000,3.96758790) (1.450000,3.96675064)
			(1.500000,3.96595456) (1.550000,3.96519692) (1.600000,3.96447520)
			(1.650000,3.96378706) (1.700000,3.96313035) (1.750000,3.96250307)
			(1.800000,3.96190338) (1.850000,3.96132959) (1.900000,3.96078010)
			(1.950000,3.96025348) (2.000000,3.95974838)};
		\node[green!40!black,anchor=west] (greenlabel) at (1.62,3.67)
		{\scriptsize Proposition~\ref{prop:finite-scale-capacity}};
		\draw[->,green!40!black,thick] (greenlabel.west) -- (1.27,3.92);
		
		% Theorem 7.1.
		\draw[very thick,blue] plot[smooth] coordinates {
			(0.848997,3.980538) (0.894176,3.711779) (0.942255,3.481898)
			(0.990334,3.293905) (1.038414,3.137009) (1.086493,3.003839)
			(1.134572,2.889190) (1.182652,2.789277) (1.230731,2.701287)
			(1.278810,2.623083) (1.326890,2.553014) (1.374969,2.489783)
			(1.423048,2.432361) (1.471127,2.379916) (1.519207,2.331773)
			(1.567286,2.287378) (1.615365,2.246271) (1.663445,2.208067)
			(1.711524,2.172443) (1.759603,2.139125) (1.807683,2.107880)
			(1.855762,2.078508) (1.903841,2.050834) (1.951921,2.024708)
			(2.000000,2.000000)};
	%	\filldraw[fill=white,draw=blue] (0.848997,3.980538) circle (0.9pt);
		\node[blue,anchor=west] (bluelabel) at (1.64,3.14)
		{\scriptsize Theorem~\ref{thm:uniform-directional-bound}};
		\draw[->,blue,thick] (bluelabel.west) -- (1.2,2.794);
		
		% Explicit double-bubble family from Section 5.2.
		\draw[red,thick] plot[smooth] coordinates {
			(2.000000,2.000000) (1.923696,2.039744) (1.817110,2.101145)
			(1.661246,2.205839) (1.462301,2.372995) (1.233750,2.629891)
			(0.992418,3.004651) (0.847537,3.282707) (0.704537,3.568889)
			(0.609812,3.737477) (0.513863,3.868945) (0.438650,3.937623)
			(0.355283,3.980276) (0.323300,3.989064) (0.000000,4.000000)};
		\node[red,align=left,anchor=west] (dblabel) at (0,2.76)
		{\scriptsize Petrides, \cite{pet25}};
		\draw[->,red,thick] (dblabel.north) -- (0.7,3.53);
		
		% Hersch reciprocal-sum hyperbola, with pointer.
		\draw[dashed,gray,samples=80,domain=1:2]
		plot (\x,{2/(1.5-1/\x)});
		\node[gray] (herschlabel) at (1.51,3.47) {\scriptsize Hersch};
		\draw[->,gray] (herschlabel.west) -- (1.14,3.3);
		
		% Distinguished points, with explicit pointer arrows.
		\fill (2,2)  ellipse [x radius=0.6pt, y radius=1.2pt];%circle (0.8pt);
		\node[anchor=east] (roundlabel) at (2.38,1.31) {\scriptsize round sphere};
		\draw[->,black] (roundlabel.north) -- (2,1.95);
		
		\filldraw[fill=white,draw=black] (0,4)  ellipse [x radius=0.6pt, y radius=1.2pt];
		\node[anchor=west] (twolabel) at (0.18,4.22) {\scriptsize two-sphere limit};
		\draw[->,black] (twolabel.west) -- (0.03,4.06);
		
		\filldraw[fill=white,draw=black] (0,0)  ellipse [x radius=0.6pt, y radius=1.2pt];
		
		\foreach \x in {0,1,2} {\draw (\x,0.04)--(\x,-0.04) node[below] {\scriptsize \x};}
		\foreach \y in {2,4} {\draw (0.012,\y)--(-0.012,\y) node[left] {\scriptsize \y};}
	\end{tikzpicture}
	
	\small
	\begin{minipage}{0.94\linewidth}
		\vspace{2mm}
		\noindent
		\tikz[baseline=-0.5ex]{\fill[blue!20] (0,0) rectangle (0.18,0.10);\draw (0,0) rectangle (0.18,0.10);}\quad
		$\Omega$: confirmed spectral region, proved to lie in $\mathscr R$ by Proposition~\ref{imageunderred}.\\[1mm]
		\tikz[baseline=-0.5ex]{\fill[yellow!20] (0,0) rectangle (0.18,0.10);\draw (0,0) rectangle (0.18,0.10);}\quad
		$\widetilde\Omega$: region not decided by the results of the present paper.\\[1mm]
		\tikz[baseline=-0.5ex]{\path[pattern=north east lines,pattern color=magenta!45] (0,0) rectangle (0.18,0.10);\draw (0,0) rectangle (0.18,0.10);}\quad
		Excluded region: proved not to belong to $\mathscr R$ by Proposition~\ref{prop:finite-scale-capacity} and Theorem~\ref{thm:uniform-directional-bound}.
	\end{minipage}
	\caption{Joint first-two-eigenvalue picture.}
	\label{fig:joint-region-intro}
\end{figure}

\subsection{Organization of the paper}

Section~\ref{sec:prelim-sphere} develops the analytic framework for admissible
measures, the inverse operator, and the spherical moment identities used in
the matrix estimates. Section~{\color{blue}\ref{sec:sharp-first-remainder}} proves the sharp first-eigenvalue
remainder in Theorem~\ref{main_thm_sphere_first}. 
Section~\ref{sec:quadratic-expanded} proves the global
three-eigenvalue remainder in Theorem~\ref{main_thm_sphere} and the geometric
stability statement in Theorem~\ref{thm:sharp-reciprocal-hierarchy}.
 Section~\ref{proof of second eigenvalue} develops the quantitative
Nadirashvili estimate and the explicit double-bubble family, leading to Theorem~\ref{main_stability_second}. 
Section~\ref{sec:joint-range} first proves the nontrivial diagonal-realization
result, Proposition~\ref{measures of multiplicity 2}, showing that every point 
$(\lambda,\lambda)$ with $0<\lambda\le2$ belongs to the joint spectral range;
it then constructs a larger inner region by convex interpolation and a degree
argument. Finally, Section~\ref{sec:first-two-refined} proves the uniform
first-two-eigenvalue bound in Theorem~\ref{thm:uniform-directional-bound} and
analyzes its explicit boundary function.

\section{Preliminaries}\label{sec:prelim-sphere}

%This section fixes the analytic framework and collects all matrix identities used later.  Keeping these ingredients in one place avoids repeating the construction of the inverse operator and the spherical moment computations in several proofs.

\subsection{Admissible measures and the inverse operator}
\label{subsec:admissible-K}

Throughout, let $g_{\text{rd}}$ be the round metric on $\mathbb{S}^2$ with area $4\pi$.  A Radon measure $\mu$ on $\mathbb{S}^2$ is called \emph{admissible} if the identity map on $C^\infty(\mathbb S^2)$ extends to a compact operator $W^{1,2}(\mathbb{S}^2,g_{\rm rd})\rightarrow L^2(\mu)$. For such a measure the generalized Laplace eigenvalues are (see
\cite{kok_eigenvalues_of_measures,knps})
\begin{equation}\label{def_of_eigenvalue}
    \lambda_k(\mu)=\inf_{V_{k+1}}    \sup_{0\neq f\in V_{k+1}}
    \frac{\displaystyle \int_{\mathbb{S}^2}|df|_{g_{\rm rd}}^2\,dv_{g_{\rm rd}}}
         {\displaystyle\int_{\mathbb{S}^2}f^2\,d\mu},
    \qquad k\geq 0,
\end{equation}
where the infimum is taken over all $k+1$ dimensional subspace $V_{k+1} \subset C^\infty(\mathbb S^2)$ that are also $(k+1)$-dimensional in $L^2(\mu)$. %The constants give $\lambda_0(\mu)=0$.  
According to \cite{GKL}, one has 
\[0=\lambda_0(\mu) < \lambda_1(\mu) \le \lambda_2(\mu) \le \cdots \nearrow \infty,\]
Moreover, there exists an orthogonal basis of eigenfunctions $\phi_j \in W^{1,2}(\mathbb{S}^2,\mu)$, 
where $W^{1,2}(\mathbb{S}^2,\mu)$ refers to the completion of $C^{\infty}(\mathbb{S}^2)$ under the norm 
\[ \|u\|^2_{W^{1,2}(\mathbb{S}^2,\mu)}=\|u\|^2_{L^2(\mu)}+\|du\|^2_{L^2(g_{\text{rd}})}.\]

If $c>0$, then directly from the
Rayleigh quotient
\begin{equation}\label{eq:measure-scaling}
 	\lambda_k(c\mu)=c^{-1}\lambda_k(\mu).
\end{equation}
If $g=e^{2\omega}g_{\text{rd}}$ is a smooth conformal metric, then
$\lambda_k(dv_g)=\lambda_k(\mathbb{S}^2,g)$ because in dimension two the Dirichlet
energy is conformally invariant.

We shall always use Hersch's balance trick before applying the matrix method.

\begin{lemma}[\cite{hersch}]\label{lem:hersch-normalization}
Let $\mu$ be an admissible measure of mass $4\pi$ on $\mathbb{S}^2$.   There exists a conformal
automorphism $\Phi$ of $\mathbb{S}^2$ such that
\begin{equation}\label{eq:qs-balance}
 \int_{\mathbb{S}^2}x_i\,d(\Phi_*\mu)=0,
 \qquad i=1,2,3.
\end{equation} 
\end{lemma}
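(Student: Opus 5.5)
We prove the balancing statement by the classical center-of-mass degree argument of Hersch. For $\xi$ in the open unit ball $B^3\subset\mathbb R^3$, let $\Phi_\xi$ be the conformal automorphism of $\mathbb S^2$ that fixes $\pm\xi/|\xi|$ and pushes points away from $\xi/|\xi|$ toward $-\xi/|\xi|$ (the hyperbolic translation of $B^3$ moving $\xi$ to $0$, restricted to the boundary). Set $\Phi_0=\mathrm{id}$. We then define the center-of-mass map
\[
F(\xi)=\frac{1}{4\pi}\int_{\mathbb S^2} x\,d\big((\Phi_\xi)_*\mu\big)=\frac1{4\pi}\int_{\mathbb S^2}\Phi_\xi(y)\,d\mu(y)\in \overline{B^3}.
\]
Since $\Phi_\xi(y)$ depends continuously on $(\xi,y)$ and is bounded, dominated convergence shows that $F$ is continuous on $B^3$. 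Moreover, $|F|<1$ on $B^3$ unless $\mu$ is a point mass, because $|x|=1$ on the sphere.

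The key step is the boundary behaviour. As $\xi\to\zeta\in\mathbb S^2$, the maps $\Phi_\xi(y)\to-\zeta$ for every $y\neq\zeta$. We must check that $\mu(\{\zeta\})<4\pi$, i.e. that $\mu$ is not a single atom; this is where admissibility enters. Admissibility, i.e. compactness of $W^{1,2}\to L^2(\mu)$, excludes atoms entirely, since $W^{1,2}$ functions in dimension two are not bounded and a Dirac mass does not define a continuous functional on $W^{1,2}$. Concretely, log-cutoff functions $u_\epsilon$ with $u_\epsilon(p)=1$ and $\|du_\epsilon\|_{L^2}\to0$ contradict boundedness if $\mu(\{p\})>0$. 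Hence $\mu$ has no atoms, and dominated convergence gives $F(\xi)\to-\zeta$, locally uniformly in $\zeta$. Thus $F$ extends continuously to $\overline{B^3}$ with $F|_{\mathbb S^2}=-\mathrm{id}$. This uniform limit is the only delicate point; it follows from writing $\Phi_\xi$ explicitly and noting that for $y$ outside a small cap around $\zeta$ the convergence is uniform, while the cap carries small $\mu$-mass uniformly in $\zeta$ by atomlessness and compactness.

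Finally, suppose $F$ never vanished. Then $F/|F|:\overline{B^3}\to\mathbb S^2$ would be a retraction up to the antipodal map, which has degree $-1$, so $-\mathrm{id}$ would be null-homotopic on $\mathbb S^2$. This contradicts Brouwer's theorem. Hence some $\xi\in B^3$ has $F(\xi)=0$, and $\Phi=\Phi_\xi$ satisfies \eqref{eq:qs-balance}. Note also that $\Phi_*\mu$ remains admissible of mass $4\pi$ with the same eigenvalues, by conformal invariance of the Dirichlet energy.
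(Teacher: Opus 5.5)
Your argument is correct and is essentially the classical Hersch center-of-mass/Brouwer-degree argument; the paper gives no proof of its own and simply cites Hersch for it. Your extra observation is exactly what is needed to pass from smooth metrics to measures: admissibility forces $\mu$ to be atomless (via mollified logarithmic cutoffs), which makes the boundary extension $F|_{\mathbb S^2}=-\mathrm{id}$ valid. One simplification: dominated convergence along an arbitrary sequence $\xi_n\to\zeta$ already gives continuity of the extension, so the uniformity-in-$\zeta$ discussion is unnecessary.
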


By conformal invariance of the two-dimensional
Dirichlet integral, one   gets
\begin{equation}\label{eq:qs-invariance}
	\lambda_k(\Phi_*\mu)=\lambda_k(\mu),\qquad k\ge0.
\end{equation}
From now on, when the matrix method is used, $\mu$ is understood to be
balanced as in \eqref{eq:qs-balance}.  Consider 
\begin{equation*}
    \mathscr H_\mu=\left\{u\in W^{1,2}(\mathbb{S}^2,\mu):\int_{\mathbb{S}^2}u\,d\mu=0\right\},
\end{equation*}
which is the subspace of $W^{1,2}(\mathbb{S}^2,\mu)$ spanned by $\phi_1, \phi_2, \cdots$. 
 We equip $\mathscr H_\mu$ with the Dirichlet inner product
\begin{equation*}
 	(u,v)_D=\int_{\mathbb{S}^2}\langle du,dv\rangle_{g_{\text{rd}}}\,dv_{g_{\text{rd}}}.
\end{equation*}
Then $\mathscr H_\mu$ becomes a Hilbert space. Moreover, by \eqref{def_of_eigenvalue}, on $\mathscr H_\mu$ we have 
\[
\int_{\mathbb S^2} u^2 d\mu \le \frac 1{\lambda_1(\mu)}  	(u,u)_D.
\]
 %the embedding $W^{1,2}(\mathbb{S}^2,\mu) \hookrightarrow L^2(\mu)$ is compact. 
So the Riesz representation theorem define a compact positive
self-adjoint operator
\begin{equation*}
 K_\mu:\mathscr H_\mu\longrightarrow\mathscr H_\mu,
 \qquad
 (K_\mu u,v)_D=\int_{\mathbb{S}^2}uv\,d\mu.
\end{equation*}
By definition it is easy to see that the  eigenvalues of $K_\mu$, listed in nonincreasing order, are $\lambda_j(\mu)^{-1},j\geq 1$, with eigenfunctions $\phi_j$ as well.

\subsection{Spherical moments and the matrix method}
\label{subsec:matrix-prelim}

We first record the moment formula that will be used repeatedly. A convenient reference is Folland \cite{Folland}.

\begin{lemma}\label{lem:momentsonsphere}
	One has the following identities
	\begin{align}
		\int_{\mathbb{S}^2}x_ix_j\,dv_{g_{\text{rd}}}
		&=\frac{4\pi}{3}\delta_{ij},\label{eq:second-moment}\\
		\int_{\mathbb{S}^2}x_ix_jx_kx_\ell\,dv_{g_{\text{rd}}}
		&=\frac{4\pi}{15}
		(\delta_{ij}\delta_{k\ell}+\delta_{ik}\delta_{j\ell}
		+\delta_{i\ell}\delta_{jk}),\label{eq:qs-fourth}\\
		\int_{\mathbb{S}^2}x_{i_1}\cdots x_{i_6}\,dv_{g_{\text{rd}}}
		&=\frac{4\pi}{105}
		\sum_{\text{\rm 15 pairings}}
		\delta_{i_{a_1}i_{b_1}}
		\delta_{i_{a_2}i_{b_2}}
		\delta_{i_{a_3}i_{b_3}}.\label{eq:sixth-moment}
	\end{align}
\end{lemma}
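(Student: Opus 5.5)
The plan is to avoid classifying isotropic tensors and instead reduce all three identities to a one-variable computation, using rotation invariance plus polarization.

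\emph{Step 1: reduce to a diagonal integral.} For $\xi\in\mathbb R^3$ and $m\in\{1,2,3\}$ put
\[
P_m(\xi)=\int_{\mathbb S^2}(\xi\cdot x)^{2m}\,dv_{g_{\rm rd}}.
\]
The measure $dv_{g_{\rm rd}}$ is invariant under $O(3)$. Hence $P_m(R\xi)=P_m(\xi)$ for every $R\in O(3)$, so $P_m$ depends only on $|\xi|$. It is also homogeneous of degree $2m$ in $\xi$. Therefore $P_m(\xi)=P_m(e_3)\,|\xi|^{2m}$.

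\emph{Step 2: compute the constant.} By Archimedes' theorem, the pushforward of $dv_{g_{\rm rd}}$ under $x\mapsto x_3$ is $2\pi\,dt$ on $[-1,1]$. This gives
\[
P_m(e_3)=2\pi\int_{-1}^1 t^{2m}\,dt=\frac{4\pi}{2m+1}.
\]

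\emph{Step 3: compare with the pairing sum.} Let $T_m$ be the symmetric $2m$-linear form on $\mathbb R^3$ given by $c_m$ times the sum over all $(2m-1)!!$ pairings of products of Kronecker deltas. Evaluating it on the diagonal, $T_m(\xi,\dots,\xi)$, each pairing contributes $|\xi|^{2m}$, so $T_m(\xi,\dots,\xi)=c_m(2m-1)!!\,|\xi|^{2m}$. Choose
\[
c_m=\frac{4\pi}{(2m+1)(2m-1)!!}.
\]
Then $T_m$ agrees on the diagonal with the symmetric form $(\xi^1,\dots,\xi^{2m})\mapsto\int_{\mathbb S^2}\prod_a(\xi^a\cdot x)\,dv_{g_{\rm rd}}$.

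\emph{Step 4: polarize.} Two symmetric multilinear forms that agree on the diagonal coincide, by the polarization identity. Evaluating both forms on basis vectors $e_{i_1},\dots,e_{i_{2m}}$ yields the claimed formula with constant $c_m$. This gives $c_1=\tfrac{4\pi}{3}$, $c_2=\tfrac{4\pi}{15}$ and $c_3=\tfrac{4\pi}{7\cdot 15}=\tfrac{4\pi}{105}$, which are \eqref{eq:second-moment}, \eqref{eq:qs-fourth} and \eqref{eq:sixth-moment}. For $m=3$ the number of pairings is $5!!=15$, matching the statement. One could alternatively cite Folland's closed formula for monomial integrals, which gives the same values.

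\emph{Main obstacle.} The only step needing any care is the identification in Step 4, that is, justifying the passage from the invariant polynomial $P_m$ to the full tensor. Polarization handles this cleanly, so no classification of isotropic tensors is needed, and the remaining steps are routine.
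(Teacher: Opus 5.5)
Your proof is correct, but it takes a different route from the paper. The paper gives no derivation of this lemma: it only cites Folland's closed formula for monomial integrals over the sphere. That formula states that $\int_{\mathbb S^2}x_1^{\alpha_1}x_2^{\alpha_2}x_3^{\alpha_3}\,dv_{g_{\rm rd}}=2\Gamma(\beta_1)\Gamma(\beta_2)\Gamma(\beta_3)/\Gamma(\beta_1+\beta_2+\beta_3)$ with $\beta_j=\frac{\alpha_j+1}{2}$ when all $\alpha_j$ are even, and $0$ otherwise; it is proved by integrating a Gaussian in polar coordinates. To get the pairing form of the lemma from it, one still has to check monomial types one by one against the number of nonvanishing pairings. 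For example, $\int x_1^6=\frac{4\pi}{7}=15\cdot\frac{4\pi}{105}$, $\int x_1^4x_2^2=\frac{4\pi}{35}=3\cdot\frac{4\pi}{105}$ and $\int x_1^2x_2^2x_3^2=\frac{4\pi}{105}$.

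Your argument has three steps: $O(3)$-invariance and homogeneity reduce everything to $P_m(e_3)$, Archimedes' theorem evaluates that one-variable integral, and polarization passes from the diagonal to the full symmetric tensor. This produces the pairing structure directly and uniformly in $m$, with no case analysis. It also gives the vanishing when some index occurs an odd number of times, since every pairing then contains a factor $\delta_{ab}$ with $a\neq b$.

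What Folland's formula buys in exchange is generality. It gives every monomial integral, in every dimension, without committing to the isotropic-tensor form.
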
 

Let $P$ be the Dirichlet-orthogonal projection from $\mathscr H_\mu$ onto $E=\mathrm{span}\{x_1,x_2,x_3\}$, and set
\begin{equation*}
    B=PK_\mu P\big|_E,
    \qquad
    C=(I-P)K_\mu P:E\longrightarrow E^\perp.
\end{equation*}
In the Dirichlet-orthonormal basis $e_1=\sqrt{\frac{3}{8\pi}}x_1, e_2=\sqrt{\frac{3}{8\pi}}x_2, e_3=\sqrt{\frac{3}{8\pi}}x_3$, one has 
\begin{equation}\label{eq:qs-Bmatrix}
 B_{ij}=\frac{3}{8\pi}\int_{\mathbb{S}^2}x_ix_j\,d\mu,
 \qquad
 \text{tr} B=\frac32.
\end{equation}

Since an orthogonal change in $\mathbb R^3$ preserves Hersch's balance condition \eqref{eq:qs-balance}, by choosing suitable coordinates in $\mathbb R^3$, in what follows we may assume $B$ is diagonal, with $B_{11} \ge B_{22} \ge B_{33}$.  Moreover, by the max--min characterization of eigenvalues, 
 
\begin{equation}\label{eq:qs-interlace}
 B_{ii}\leq\frac{1}{\lambda_i(\mu)},\qquad i=1,2,3.
\end{equation}
Consequently, we have
\begin{equation}\label{Hersch_mu}
    \mathcal{H}(\mu)=\sum_{i=1}^3\left(\frac{1}{\lambda_i(\mu)}-B_{ii}\right)\geq 0.
\end{equation}
This extends Hersch's classical reciprocal-sum inequality \eqref{Hersch} to the setting of admissible measures.

In what follows, $\|\cdot\|_{\mathrm{HS}}$ denotes the Hilbert--Schmidt norm of a matrix.  For the off-diagonal block, we use  $C^*C=PK_\mu^2P-B^2$ to get 
\begin{equation}\label{eq:qs-A-small}
    \begin{split}
        \|C\|_{\text{HS}}^2 =\text{tr}(PK_\mu^2P)-\text{tr}(B^2) \leq\sum_{i=1}^3(\lambda_i^{-2}(\mu)-B_{ii}^2) 
        \leq 
        2\lambda_1^{-1}(\mu)\mathcal H(\mu).
    \end{split}
\end{equation}

Denote $A=B-\frac{1}{2}I$. We prove 

\begin{proposition} \label{cor:mx-A-C} 
	We have $\|A\|_{\text{\rm HS}}\leq 2\|C\|_{\text{HS}}$.
\end{proposition}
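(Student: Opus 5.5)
The plan is to test the off-diagonal block $C$ against cubic trial functions built from $A$ itself. The key identity is $\sum_k x_k^2=1$ on $\mathbb S^2$: it turns the $\mu$-integral of a quadratic into a sum of pairings of the linear functions $x_k$ against cubics, which are exactly the quantities $C$ sees.

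Set $a=A$, which is symmetric with $\operatorname{tr}a=0$ because $\operatorname{tr}B=\frac32$. Put $q=\sum_{i,j}a_{ij}x_ix_j$, a degree-two spherical harmonic. For each $k$ let $w_k=(I-P)\big(x_kq-c_k\big)$, where $c_k=\frac1{4\pi}\int x_kq\,d\mu$ is chosen so that $w_k\in\mathscr H_\mu$. Since constants have zero energy and $P$ maps into $E$, we have $w_k\in E^\perp$. By the definition of $K_\mu$,
\begin{equation*}
(Ce_k,w_k)_D=(K_\mu e_k,w_k)_D=\int e_k\,w_k\,d\mu .
\end{equation*}
We expand this integral in three pieces.
\begin{itemize}
\item The constant $c_k$ contributes nothing, because $\mu$ is balanced as in \eqref{eq:qs-balance}.
\item The projection $P(x_kq)$ is the degree-one harmonic component of the cubic $x_kq$. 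For traceless $a$, the moment formulas of Lemma~\ref{lem:momentsonsphere} give $P(x_kq)=\frac25(ax)_k$.
\item Summing the main terms over $k$ and using $\sum_kx_k^2=1$ gives $\sum_k\int x_k\,x_kq\,d\mu=\int q\,d\mu=\frac{8\pi}{3}\operatorname{tr}(aB)=\frac{8\pi}3\|A\|_{\rm HS}^2$, by \eqref{eq:qs-Bmatrix}.
\end{itemize}
The projection terms contribute $\frac25\cdot\frac{8\pi}3\operatorname{tr}(aB)$. Hence
\begin{equation*}
\sum_k(Ce_k,w_k)_D=\sqrt{\tfrac{3}{8\pi}}\cdot\tfrac35\cdot\tfrac{8\pi}{3}\,\|A\|_{\rm HS}^2 .
\end{equation*}

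Next we apply Cauchy--Schwarz to the left-hand side. This gives the bound $\|C\|_{\rm HS}\big(\sum_k\|w_k\|_D^2\big)^{1/2}$. Since $\|w_k\|_D=\|(I-P)(x_kq)\|_D$ (constants have zero energy), this reduces to an explicit computation on the round sphere. Write $x_kq$ as its degree-three harmonic part plus $\frac25(ax)_k$, and use that degree-$\ell$ harmonics have energy $\ell(\ell+1)$ times their $L^2(dv_{g_{\rm rd}})$ norm. Then $\sum_k\|(I-P)(x_kq)\|_D^2=12\sum_k\|(x_kq)_3\|_{L^2}^2$. The $L^2$ norms follow from the fourth and sixth moment identities \eqref{eq:qs-fourth}, \eqref{eq:sixth-moment}:
\begin{equation*}
\sum_k\int x_k^2q^2=\int q^2=\tfrac{8\pi}{15}\|a\|^2,
\qquad
\sum_k\big\|\tfrac25(ax)_k\big\|_{L^2}^2=\tfrac4{25}\cdot\tfrac{4\pi}3\|a\|^2 .
\end{equation*}
The difference is the degree-three mass, so $\sum_k\|w_k\|_D^2=\kappa\|A\|_{\rm HS}^2$ for an explicit constant $\kappa$. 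Dividing by $\|A\|_{\rm HS}$ then yields $\|A\|_{\rm HS}\le c\,\|C\|_{\rm HS}$ with $c=\frac{5\sqrt{\kappa}}{3}\sqrt{\frac{8\pi}{3}}\cdot\frac{3}{8\pi}$.

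The main obstacle is purely bookkeeping: getting the spherical-harmonic projection constant and the sixth-moment-based norm exactly right, so that $c\le2$. A quick evaluation gives $\kappa=12\cdot\frac{8\pi}{75}\|a\|^{-2}\cdot\|a\|^2$, so $\kappa=\frac{32\pi}{25}$, and then $c=\frac{5}{3}\sqrt{\frac{32\pi}{25}\cdot\frac{3}{8\pi}}\approx1.15\le2$. I will recheck this against \eqref{eq:sixth-moment} directly. If the constant comes out slightly worse, one can instead optimize over test functions $x_kq+t\,(ax)_k$ before projecting. The stated factor $2$ leaves some room.
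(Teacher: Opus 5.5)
Your argument is the paper's proof specialized to $H=A$. You pair $Ce_k$ with the degree-three part $x_k\,x^TAx-\frac25(Ax)_k$, which is what the paper does through the decomposition $d\nu=\sum_i x_i\,d\eta_i$. Your identity $\sum_k(Ce_k,w_k)_D=\sqrt{\frac{3}{8\pi}}\cdot\frac{8\pi}{5}\|A\|_{\mathrm{HS}}^2$ is correct, and so is the Cauchy--Schwarz step.

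The one thing to fix is the final arithmetic. The degree-three mass is
\[
\frac{8\pi}{15}\|A\|_{\mathrm{HS}}^2-\frac{16\pi}{75}\|A\|_{\mathrm{HS}}^2=\frac{24\pi}{75}\|A\|_{\mathrm{HS}}^2=\frac{8\pi}{25}\|A\|_{\mathrm{HS}}^2,
\]
not $\frac{8\pi}{75}\|A\|_{\mathrm{HS}}^2$. Hence $\kappa=\frac{96\pi}{25}$, which matches the paper's $\sum_i\bigl(\frac{32\pi}{35}\|H\|_{\mathrm{HS}}^2+\frac{192\pi}{175}(H^2)_{ii}\bigr)=\frac{96\pi}{25}\|H\|_{\mathrm{HS}}^2$. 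Your constant is then $c=\frac53\sqrt{\frac{96\pi}{25}\cdot\frac{3}{8\pi}}=2$ exactly, not $\approx1.15$. So the proposition does follow from your argument, but with no slack at all.

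Your fallback of optimizing over $x_kq+t(Ax)_k$ would have been vacuous anyway. The projection $(I-P)$ annihilates $(Ax)_k\in E$, so $w_k$ does not change.

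One small genuine simplification in your route: because you sum over $k$ before computing norms, $\sum_kx_k^2=1$ reduces everything to $\int q^2\,dv_{g_{\rm rd}}$. Only the fourth-moment identity is therefore needed. The paper computes each $\|x_i(x^THx)-\frac25(Hx)_i\|_D^2$ separately with the sixth moment because it wants the componentwise estimate \eqref{(AH)_1} for general trace-free $H$. That estimate is reused in Proposition~\ref{B_1}.
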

\begin{proof} 
By definition and \eqref{eq:qs-fourth},
\begin{equation*}
 \int_{\mathbb S^2}xx^T\,d\mu=\frac{8\pi}{3}B,
 \qquad
 \int_{\mathbb S^2}xx^T(x^TBx)\,dv_{g_{\rm rd}}
 =\frac{4\pi}{15}\left(\frac32I+2B\right).
\end{equation*}
Let
\[
 d\nu=d\mu-2x^TBx\,dv_{g_{\rm rd}}.
\]
Then
\[
 A=\frac{5}{8\pi}\int_{\mathbb S^2}xx^T\,d\nu.
\]
Since $x^Tx=1$, we have the decomposition
\begin{equation}\label{decomposition_nu}
 d\nu=\sum_{i=1}^3x_i^2\,d\mu-2\sum_{i=1}^3x_i(Bx)_i\,dv_{g_{\rm rd}}
 =:\sum_{i=1}^3x_i\,d\eta_i,
\end{equation}
where $d\eta_i=x_i\,d\mu-2(Bx)_i\,dv_{g_{\rm rd}}$. Moreover,
\[
 (Cx_i,f)_D=((I-P)K_\mu Px_i,f)_D
 =(K_\mu x_i-Bx_i,f)_D
 =\int_{\mathbb S^2}f\,d\eta_i.
\]
Hence, for every symmetric trace-free matrix $H$,
\begin{equation*}
 \langle A,H\rangle_{\rm HS}
 =\frac{5}{8\pi}\int_{\mathbb S^2}x^THx\,d\nu
 =\frac{5}{8\pi}\sum_i(Cx_i,x_i(x^THx))_D.
\end{equation*}
Using \eqref{eq:qs-fourth},
\[
 \int_{\mathbb S^2}x_ix_j(x^THx)\,dv_{g_{\rm rd}}
 =\frac{8\pi}{15}H_{ij},
\]
so the projection of $x_i(x^THx)$ onto $E$ is exactly
$\frac25(Hx)_i$. Consequently, since $Cx_i\in E^\perp$,
\begin{equation}\label{(AH)_2}
\begin{split}
 \langle A,H\rangle_{\rm HS}
 &=\frac{5}{8\pi}\sum_i
 \left(Cx_i,x_i(x^THx)-\frac25(Hx)_i\right)_D\\
 &\le \frac{5}{8\pi}\sum_i\|Cx_i\|_D
 \left\|x_i(x^THx)-\frac25(Hx)_i\right\|_D.
\end{split}
\end{equation}
Because $x^THx$ is a degree-two spherical harmonic,
$x_i(x^THx)-\frac25(Hx)_i$ is its degree-three component. Therefore,
using \eqref{eq:second-moment} and \eqref{eq:sixth-moment}, for each fixed $i$,
\begin{equation*}
	\begin{split}
		\left\|x_i(x^THx)-\frac25(Hx)_i\right\|_D^2
		&=12\int_{\mathbb S^2}
		\left(x_i(x^THx)-\frac25(Hx)_i\right)^2\,dv_{g_{\rm rd}}\\
		&=12\int_{\mathbb S^2}x_i^2(x^THx)^2\,dv_{g_{\rm rd}}
		-\frac{48}{25}\int_{\mathbb S^2}(Hx)_i^2\,dv_{g_{\rm rd}}\\
		&=12\cdot\frac{4\pi}{105}(2\|H\|^2_{\text{HS}}+8(H^2)_{ii})-\frac{48}{25}\cdot\frac{4\pi}{3}(H^2)_{ii}\\
		&=\frac{32\pi}{35}\|H\|_{\rm HS}^2
		+\frac{192\pi}{175}(H^2)_{ii}.
	\end{split}
\end{equation*}
Together with \eqref{(AH)_2} and
$\|Cx_i\|_D=\sqrt{\frac{8\pi}{3}(C^*C)_{ii}}$, this gives the refined estimate
\begin{equation}\label{(AH)_1}
 \langle A,H\rangle_{\rm HS}
 \le 2\sum_i\sqrt{(C^*C)_{ii}
 \left(\frac{5}{21}\|H\|_{\rm HS}^2+\frac27(H^2)_{ii}\right)}.
\end{equation}
Finally, Cauchy--Schwarz yields
\begin{equation*}
\begin{split}
 \langle A,H\rangle_{\rm HS}
 & \le 2 
 \left(\sum_i (C^*C)_{ii}\right)^{1/2} \left( \sum_i  \left(\frac{5}{21}\|H\|_{\rm HS}^2+\frac27(H^2)_{ii}\right) \right)^{1/2}\\
% &\le\frac{5}{8\pi}\left(\sum_i\|Cx_i\|_D^2\right)^{1/2}
% \left(\sum_i\left\|x_i(x^THx)-\frac25(Hx)_i\right\|_D^2\right)^{1/2}\\
 &=2\|C\|_{\rm HS}\|H\|_{\rm HS}.
\end{split}
\end{equation*}
Taking $H=A$ proves the proposition. 
\end{proof}

For simplicity, in what follows we denote the normalized degree-three component of $x_i(x^THx)$ by
\begin{equation*}
	U_i(H):=\frac{5}{\sqrt{96\pi}}\left(x_i(x^THx)-\frac{2}{5}(Hx)_i\right).
\end{equation*}
Then as we have seen in the proof of Proposition \ref{cor:mx-A-C}, 
\begin{equation*}
	\|U_i(H)\|^2_D=\frac{5}{21}\|H\|^2_{\text{HS}}+\frac{2}{7}(H^2)_{ii},\quad\sum_{i=1}^3\|U_i(H)\|^2_D=\|H\|^2_{\text{HS}}.
\end{equation*}

\section{Proof of Theorem \ref{main_thm_sphere_first}}
{\color{blue}\label{sec:sharp-first-remainder}}

We now apply the tools of Section~\ref{sec:prelim-sphere} to prove Theorem \ref{main_thm_sphere_first}. We will first prove the inequality  \eqref{improvedHersc_first}, then show that the   exponent $2$ and the constant $\frac{9}{160}$ are sharp.

\subsection{Proof of the inequality \eqref{improvedHersc_first}}

In this section, we set $H=\text{diag}\left(\frac{2}{3},-\frac{1}{3},-\frac{1}{3}\right)$ and denote $U_i=U_i(H)$.  It is easy to compute that
\begin{equation*}
	\|U_1\|_D^2=\frac{2}{7},\quad \|U_2\|_D^2=\|U_3\|_D^2=\frac{4}{21},\quad
	(U_i,U_j)_D=0,i\neq j.
\end{equation*}
To construct trial triple, we set two parameters $t,s$ and define
\begin{equation*}
	\phi_1=\frac{e_1+tU_1}{\|e_1+tU_1\|_D},\quad
	\phi_i=\frac{e_i+sU_i}{\|e_i+sU_i\|_D},i=2,3,
\end{equation*}
where $e_i=\sqrt{\frac{3}{8\pi}}x_i$ for $i=1,2,3$. These three functions are Dirichlet orthonormal, so we have
\begin{equation*}
	\mathcal{H}(\mu)+\frac{3}{2}\geq\sum^3_{i=1}(K_{\mu}(\phi_i-\overline{\phi}_i), \phi_i-\overline{\phi}_i)_D=\mathcal{F}_{t,s}-\mathcal{E}_{t,s}
\end{equation*}
where
\begin{equation*}
	\begin{split}
		\mathcal{F}_{t,s}&=\frac{1}{4\pi}\int_{\mathbb{S}^2}\frac{25}{24}\left(\frac{x_1^2(\frac{6}{5}+t(x_1^2-\frac{3}{5}))^2}{1+\frac{2}{7}t^2}+\frac{(1-x_1^2)(\frac{6}{5}+s(x_1^2-\frac{1}{5}))^2}{1+\frac{4}{21}s^2}\right)d\mu\\
		\mathcal{E}_{t,s}&=\frac{1}{4\pi}\left(\frac{t^2}{1+\frac{2}{7}t^2}\left(\int_{\mathbb{S}^2}U_1d\mu\right)^2+\frac{s^2}{1+\frac{4}{21}s^2}\sum_{j=2,3}\left(\int_{\mathbb{S}^2}U_jd\mu\right)^2\right)
	\end{split}
\end{equation*}
In particular, we set 
\begin{equation*}
	F_{t,s}(z)=\frac{25}{24}\left(\frac{z(\frac{6}{5}+t(z-\frac{3}{5}))^2}{1+\frac{2}{7}t^2}+\frac{(1-z)(\frac{6}{5}+s(z-\frac{1}{5}))^2}{1+\frac{4}{21}s^2}\right).
\end{equation*}
\begin{lemma}
	Suppose $-\frac{1}{2}\leq s\leq\frac{4}{7}$. Then $F_{s,s}$ is convex on $[0,1]$, nondecreasing on $[\frac{1}{3},1]$ if $s>0$, and
	nonincreasing on $[0,\frac{1}{3}]$ if $s<0$.
\end{lemma}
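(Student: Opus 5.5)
The plan is to exploit the fact that $F_{s,s}$ is an explicit cubic polynomial in $z$, so that both assertions reduce to sign checks at finitely many points. Put
\[
a=\Bigl(1+\tfrac27 s^2\Bigr)^{-1},\qquad b=\Bigl(1+\tfrac4{21}s^2\Bigr)^{-1},\qquad c=\tfrac65-\tfrac35 s,\qquad d=\tfrac65-\tfrac15 s .
\]
Then
\[
\tfrac{24}{25}F_{s,s}(z)=a\,z(c+sz)^2+b\,(1-z)(d+sz)^2 .
\]
Expanding, the $z^3$-coefficient of the right-hand side is $s^2(a-b)$, and
\[
a-b=-\tfrac{2}{21}s^2ab\le 0 .
\]
Hence $F_{s,s}''$ is an affine function of $z$ which is nonincreasing on $[0,1]$. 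Convexity on $[0,1]$ therefore follows once we show $F_{s,s}''(1)\ge 0$.

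For the convexity step, differentiate the two cubic pieces directly:
\[
\bigl(z(c+sz)^2\bigr)''=4cs+6s^2z,\qquad \bigl((1-z)(d+sz)^2\bigr)''=2s^2-4ds-6s^2z .
\]
At $z=1$, after substituting $c$ and $d$ and multiplying by $5$, one gets
\[
24s(a-b)+s^2(18a-16b).
\]
Using the identities $a-b=-\tfrac2{21}s^2ab$ and $18a-16b=ab\bigl(2-\tfrac87 s^2\bigr)$, this expression becomes
\[
s^2ab\Bigl(2-\tfrac87 s^2-\tfrac{16}{7}s\Bigr).
\]
The bracket is a concave quadratic in $s$, so it suffices to check it at the endpoints. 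At $s=\tfrac47$ it is about $0.32>0$, and at $s=-\tfrac12$ it is $\tfrac{20}7>0$. Thus the bracket is positive on $[-\tfrac12,\tfrac47]$, so $F_{s,s}''(1)\ge0$ and $F_{s,s}$ is convex on $[0,1]$.

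For the monotonicity statements, convexity makes $F_{s,s}'$ nondecreasing on $[0,1]$. So it is enough to show:
\begin{itemize}
\item $F_{s,s}'(\tfrac13)\ge0$ when $0<s\le\tfrac47$, which gives $F_{s,s}'\ge0$ on $[\tfrac13,1]$;
\item $F_{s,s}'(\tfrac13)\le0$ when $-\tfrac12\le s<0$, which gives $F_{s,s}'\le0$ on $[0,\tfrac13]$.
\end{itemize}
In other words, we must show that $s\,F_{s,s}'(\tfrac13)\ge 0$ on $[-\tfrac12,\tfrac47]$. To do this, compute
\[
\tfrac{24}{25}F_{s,s}'(\tfrac13)=a\,(c+\tfrac s3)\bigl(c+s\bigr)-b\,(d+\tfrac s3)^2+\tfrac43 b\,s\,(d+\tfrac s3).
\]
Note that at $s=0$ we have $a=b=1$ and $c=d$, so this equals $c^2-d^2=0$; this is why a factor of $s$ appears. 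Clear the denominators by multiplying by $(1+\tfrac27s^2)(1+\tfrac4{21}s^2)$, and then factor out $s$. One is left with a polynomial of low degree in $s$. Its positivity on $[-\tfrac12,\tfrac47]$ would be checked either by showing that its value at $0$ is positive and its derivative is controlled, or by bounding its coefficients on the interval.

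The main obstacle is this last polynomial sign verification. The convexity part factors cleanly, but $F'_{s,s}(\tfrac13)$ mixes $a$ and $b$ without an obvious common factor beyond $s$. My first attempt would be to write everything over $ab$ and use $a^{-1}-b^{-1}=\tfrac2{21}s^2$, hoping the quotient is a quadratic or cubic with a visible positive leading part on the small interval.
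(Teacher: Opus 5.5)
Your argument is the same as the paper's. The nonpositive $z^3$-coefficient reduces convexity to $F''_{s,s}(1)\ge 0$, and your factorization agrees with the paper's $F''_{s,s}(1)=-\frac{35s^2(4s^2+8s-7)}{4(2s^2+7)(4s^2+21)}$; monotonicity then reduces to the sign of $F'_{s,s}(\frac13)$. The step you left open is routine: carrying out your reduction gives $F'_{s,s}(\frac13)=\frac{7s(10s^3+156s^2-33s+567)}{18(2s^2+7)(4s^2+21)}$, and the cubic factor is clearly positive on $[-\frac12,\frac47]$ because the constant $567$ dominates the remaining terms.
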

\begin{proof}
	The coefficient of $z^3$ in $F_{s,s}$ is always non-positive, and if $-\frac{1}{2}\leq s\leq\frac{4}{7}$,
	\begin{equation*}
		F_{s,s}''(1)=-\frac{35s^2(4s^2+8s-7)}{4(2s^2+7)(4s^2+21)}\geq 0.
	\end{equation*}
	So $F''_{s,s}(z)\geq F''_{s,s}(1)\geq 0$ for $0\leq z\leq 1$, i.e. $F_{s,s}$ is convex on $[0,1]$. Also, we have
	\begin{equation*}
		F_{s,s}'\left(\frac{1}{3}\right)=\frac{7s(10s^3+156s^2-33s+567)}{18(2s^2+7)(4s^2+21)}.
	\end{equation*}
	The bracket in the numerator is always positive on $[-\frac{1}{2},\frac{4}{7}]$, so the convexity gives the stated monotonicity.
\end{proof}

Differentiation shows that $F''_{t,s}(1)=0$ is equivalent to
\[
R_t(s):=4(t^2-4t+14)s^2+12(2t^2+7)s-21t(3t+4)=0.
\]
In particular, for any $t>0$, there is a unique positive $s=s(t)$ such that $F''_{t,s}(1)=0$. 
	\begin{lemma}\label{estimates for s(t)}
     Suppose $0<t<\frac{18}{35}$. Then 
		\begin{enumerate}
			\item $t<s(t)<\frac{11}{10}t$;
			\item  $F_{t,s(t)}$ is convex on $[0,1]$ and increasing on
			$[\frac13,1]$.
			\item 	Moreover, if $0<t<\frac{1}{10}$, then
			$t+\frac{t^2}{12}-\frac{t^3}{4}<s(t)<\frac{11}{10}t$. 
		\end{enumerate}  
	\end{lemma}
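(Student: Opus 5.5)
The plan is to reduce all three statements to explicit polynomial sign checks, using that $R_t$ is a quadratic in $s$ with a clean structure.

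\emph{Part (1).} For $t>0$ the leading coefficient $4(t^2-4t+14)$ of $R_t$ is positive and the constant term $-21t(3t+4)$ is negative. Hence $R_t$ has exactly one positive root $s(t)$, and $R_t(s)<0$ on $[0,s(t))$ and $R_t(s)>0$ on $(s(t),\infty)$. So (1) follows from the two sign conditions $R_t(t)<0$ and $R_t(\tfrac{11}{10}t)>0$. Direct expansion gives
\[
R_t(t)=t^2(4t^2+8t-7),\qquad R_t\!\left(\tfrac{11}{10}t\right)=t\left(4.84t^3+7.04t^2+4.76t+8.4\right).
\]
The first is negative because the positive root of $4t^2+8t-7$ is $\frac{-2+\sqrt{11}}{2}\approx 0.658>\frac{18}{35}$. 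The second is obviously positive.

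\emph{Part (3).} I will use the same mechanism: it suffices to show $R_t(\sigma(t))<0$ for $\sigma(t)=t+\frac{t^2}{12}-\frac{t^3}{4}$ and $0<t<\frac1{10}$. The heuristic behind the choice of $\sigma$ is the following. We have $R_t(t)\approx-7t^2$ and $\partial_sR_t(t)\approx 84$, so $s(t)\approx t+\frac{t^2}{12}$; the extra $-\frac{t^3}{4}$ is a safety margin that should make the $t^3$ coefficient of $R_t(\sigma(t))$ strictly negative. Concretely, I will expand $R_t(\sigma(t))=t^3\,p(t)$ for an explicit polynomial $p$. I expect $p(0)<0$, and I will then bound the higher coefficients crudely on $[0,\frac1{10}]$ to conclude $p<0$ there. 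The upper bound in (3) is already contained in (1).

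\emph{Part (2).} $F_{t,s}$ is a cubic in $z$. Its $z^3$ coefficient is
\[
\frac{25}{24}\left(\frac{t^2}{1+\frac27t^2}-\frac{s^2}{1+\frac4{21}s^2}\right).
\]
With $s=s(t)>t$ this coefficient is negative. Indeed, $x\mapsto x/(1+\frac4{21}x)$ is increasing and $\frac27>\frac4{21}$, so
\[
\frac{t^2}{1+\frac27t^2}<\frac{t^2}{1+\frac4{21}t^2}<\frac{s^2}{1+\frac4{21}s^2}.
\]
Therefore $F''_{t,s(t)}$ is a nonincreasing affine function of $z$, and $F''_{t,s(t)}(z)\ge F''_{t,s(t)}(1)=0$ on $[0,1]$; this is exactly the defining property of $s(t)$, and it gives convexity. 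For monotonicity on $[\frac13,1]$, convexity reduces the claim to $F'_{t,s(t)}(\frac13)>0$. I will compute $F'_{t,s}(\frac13)$ as a rational function of $(t,s)$, in analogy with the formula for $F'_{s,s}(\frac13)$ in the preceding lemma. I will then show that its numerator is positive on the region $\{0<t<\frac{18}{35},\ t\le s\le\frac{11}{10}t\}$ provided by (1).

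The main obstacle is this last sign verification. The numerator of $F'_{t,s}(\frac13)$ is a two-variable polynomial, and I would handle it as follows. The dominant part should be linear with a positive coefficient, of the form $c_1t+c_2s$, coming from the $\frac65$ terms, since $F'_{0,0}(\frac13)=0$. I would write $s=t+\theta t$ with $\theta\in[0,\frac1{10}]$ and show that this linear part dominates the higher-order terms. For that I would bound each higher-order monomial using $t<\frac{18}{35}$ and $s<\frac{11}{10}t<0.57$. If the crude bounds are not sharp enough near $t=\frac{18}{35}$, the fallback is to split $[0,\frac{18}{35}]$ into a few subintervals and check the sign on each.
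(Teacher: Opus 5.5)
Your proposal is correct. For parts (1), (3) and the convexity half of (2) it is the paper's argument: you locate $s(t)$ by the sign of $R_t$ at the test points $t$, $\frac{11}{10}t$ and $t+\frac{t^2}{12}-\frac{t^3}{4}$, and you get convexity from the negative $z^3$ coefficient together with $F''_{t,s(t)}(1)=0$. There are only cosmetic differences here. The paper derives the sign pattern of $R_t$ from $R_t'>0$ on $s\ge0$ rather than from the product of the roots. In (3) it writes $R_t(t+\delta)=-\frac{t^3}{3}(6t^3-8t^2+74t+11)+4(t^2-4t+14)\delta^2$, which confirms your expectation $p(0)=-\frac{11}{3}$.

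The genuine difference is in the check $F'_{t,s(t)}(\frac13)>0$. The paper uses the fact that this quantity separates as $\frac{(9-2t)(3+t)}{18(1+2t^2/7)}-G(s)$, where $G(s)=\frac{(9+s)(1-s)}{6(1+4s^2/21)}$. It shows $G$ is decreasing on the relevant range, invoking the upper bound from (1) to guarantee $G'<0$. Then $s(t)>t$ gives $F'_{t,s(t)}(\frac13)\ge F'_{t,t}(\frac13)$, a one-variable expression already computed in the preceding lemma.

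Your direct two-variable route also works, and the obstacle you anticipate does not materialize. Over the positive denominator $18(7+2t^2)(21+4s^2)$ the numerator is
\[
7\bigl(63t+504s-204t^2+171s^2+144st^2+12ts^2+10t^2s^2\bigr).
\]
Its only negative monomial, $-204t^2$, is dominated by $504s\ge 504t$ as soon as $s\ge t$ and $t<\frac{18}{35}$. So no crude bounding of higher-order terms and no subinterval splitting is needed.

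The trade-off is this. Your route uses only the lower bound $s(t)>t$ and needs no monotonicity of $G$. The paper's route reuses the formula for $F'_{s,s}(\frac13)$ instead of expanding a new two-variable polynomial, at the cost of appealing to the upper bound on $s(t)$.
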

	
	\begin{proof}
				For fixed $t$, $R_t$ is strictly increasing on $s\geq0$, because
		\[
		R_t'(s)=8(t^2-4t+14)s+12(2t^2+7)>0.
		\]
		For $t\leq18/35$,
		\[
		R_t(t)=t^2(4t^2+8t-7)<0,
		\quad
		R_t\left(\frac{11}{10}t\right)
		=\frac{t}{25}(121t^3+176t^2+119t+210)>0,
		\]
		which proves (1).
		
		For (3), set $\delta=\frac{t^2}{12}-\frac{t^3}{4}$. Then
		\begin{equation*}
			\begin{split}
				R_t(t+\delta)
				&=R_t(t)+4(2t^3-2t^2+28t+21)\delta
				+4(t^2-4t+14)\delta^2\\
				&=-\frac{t^3}{3}(6t^3-8t^2+74t+11)
				+4(t^2-4t+14)\delta^2\\
				&\leq-\frac{10t^3}{3}+\frac{7t^4}{18}<0
				\quad\left(0<t\leq\frac{1}{10}\right),
			\end{split}
		\end{equation*}
		where we used $\delta\leq\frac{t^2}{12}$. Since $R_t$ is increasing, (3) follows.
		
		Finally we prove (2). Since $s(t)>t$,
		\begin{equation*}
			F''_{t,s(t)}(z)
			=\frac{25}{4}(z-1)\left(\frac{t^2}{1+\frac{2t^2}{7}}
			-\frac{s(t)^2}{1+\frac{4s(t)^2}{21}}\right)\geq0,
			\quad 0\leq z\leq1.
		\end{equation*}
		Thus $F_{t,s(t)}$ is convex. For the monotonicity, we first calculate 
		\[F'_{t,s(t)}\!\left(\frac13\right)
		=\frac{(9-2t)(3+t)}{18(1+2t^2/7)}
		-\frac{(9+s(t))(1-s(t))}{6(1+4s(t)^2/21)}.\]
		Write $G(s)=\frac{(9+s)(1-s)}{6(1+4\frac{s^2}{21})}$. By (1), 
		$0\leq s\leq\frac{11}{10}\cdot\frac{18}{35}$. So 
		\[
		G'(s)=\frac{7(16s^2-57s-84)}{(4s^2+21)^2}<0.
		\]
		So we obtain, using $s(t)>t$,
		\begin{equation*}
			\begin{split}
				F'_{t,s(t)}\!\left(\frac13\right)
				\ge F'_{t,t}\!\left(\frac13\right)
				=\frac{7t(10t^3+156t^2-33t+567)}
				{18(2t^2+7)(4t^2+21)}>0.
			\end{split}
		\end{equation*}
		Convexity now implies that $F_{t,s(t)}$ is increasing on $[1/3,1]$.
	\end{proof} 

\begin{lemma}
	Let $\mu$ be an admissible measure on $(\mathbb S^2,[g_{\rm rd}])$ with
	$\mu(\mathbb S^2)=4\pi$. Then
	\begin{equation*}
		\mathcal{E}_{t,s}\leq\frac{20}{27}\frac{\mathcal{H(\mu)}}{\lambda_1(\mu)}\max\left\{\frac{t^2}{1+\frac{2}{7}t^2},\frac{s^2}{1+\frac{4}{21}s^2}\right\}.
	\end{equation*}
\end{lemma}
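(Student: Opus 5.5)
The plan is to express each $\int_{\mathbb S^2}U_j\,d\mu$ as a Dirichlet pairing against $Cx_j$. We then apply Cauchy--Schwarz and finish with the bound \eqref{eq:qs-A-small} on $\|C\|_{\rm HS}^2$.

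\textbf{Step 1: write $U_j$ as $x_j$ times an even function.} Since $H=\operatorname{diag}(\frac23,-\frac13,-\frac13)$ is diagonal, $(Hx)_j=H_{jj}x_j$. Hence
\[
U_j=c\,x_j\,q_j,\qquad c=\frac{5}{\sqrt{96\pi}},\qquad q_j:=x^THx-\frac25H_{jj}.
\]
Here $q_j$ is a degree-two spherical harmonic plus a constant.

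\textbf{Step 2: pass to $\eta_j$.} Recall from the proof of Proposition~\ref{cor:mx-A-C} that $d\eta_j=x_j\,d\mu-2(Bx)_j\,dv_{g_{\rm rd}}$ and $(Cx_j,f)_D=\int f\,d\eta_j$. With $B$ diagonal we have $(Bx)_j=B_{jj}x_j$. Since $q_j$ is even and $x_j$ is odd, $\int q_jx_j\,dv_{g_{\rm rd}}=0$. Therefore
\[
\int_{\mathbb S^2}U_j\,d\mu=c\int_{\mathbb S^2}q_j\,d\eta_j .
\]
The constant part of $q_j$ contributes nothing. Indeed, $\eta_j$ has total mass $\int x_j\,d\mu=0$ by the balance condition \eqref{eq:qs-balance}, and constants have zero Dirichlet pairing. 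So
\[
\int_{\mathbb S^2}U_j\,d\mu=c\,(Cx_j,x^THx)_D .
\]
The one point needing care is that the identity $(Cx_j,f)_D=\int f\,d\eta_j$ holds for $f$ with a constant component. This follows from the mass-zero property of $\eta_j$, or alternatively by replacing $f$ with its $\mu$-mean-zero part.

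\textbf{Step 3: Cauchy--Schwarz.} This gives
\[
\left(\int_{\mathbb S^2}U_j\,d\mu\right)^2\le c^2\,\|Cx_j\|_D^2\,\|x^THx\|_D^2 .
\]
We evaluate the right-hand side with \eqref{eq:qs-fourth} and the fact that degree-two harmonics satisfy $\|f\|_D^2=6\int f^2$:
\[
\|x^THx\|_D^2=6\cdot\frac{4\pi}{15}\cdot2\|H\|_{\rm HS}^2=\frac{32\pi}{15},
\qquad
\|Cx_j\|_D^2=\frac{8\pi}{3}(C^*C)_{jj}.
\]
Multiplying out,
\[
\left(\int_{\mathbb S^2}U_j\,d\mu\right)^2\le\frac{25}{96\pi}\cdot\frac{32\pi}{15}\cdot\frac{8\pi}{3}(C^*C)_{jj}=\frac{40\pi}{27}(C^*C)_{jj}.
\]

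\textbf{Step 4: sum and use \eqref{eq:qs-A-small}.} Bound both coefficients in $\mathcal E_{t,s}$ by their maximum and sum over $j$:
\[
\mathcal E_{t,s}\le\frac{1}{4\pi}\max\left\{\frac{t^2}{1+\frac27t^2},\frac{s^2}{1+\frac4{21}s^2}\right\}\cdot\frac{40\pi}{27}\,\|C\|_{\rm HS}^2 .
\]
Then \eqref{eq:qs-A-small} gives $\|C\|_{\rm HS}^2\le2\lambda_1^{-1}(\mu)\mathcal H(\mu)$. Combining, the total constant is
\[
\frac{1}{4\pi}\cdot\frac{40\pi}{27}\cdot 2=\frac{20}{27},
\]
which is exactly the constant in the statement.
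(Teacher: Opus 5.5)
Your proof is correct and follows essentially the same route as the paper. Both arguments write $\int U_j\,d\mu$ as a Dirichlet pairing of $Cx_j$ (the paper uses $Ce_j$) with the degree-two harmonic $x_1^2-\frac13=x^THx$, apply Cauchy--Schwarz with $\|x_1^2-\frac13\|_D^2=\frac{32\pi}{15}$, and conclude with \eqref{eq:qs-A-small}. Your explicit summation $\sum_j (C^*C)_{jj}=\|C\|_{\rm HS}^2$ is what produces the constant $\frac{20}{27}$; the paper's per-term bound $\|Ce_i\|_D^2\le\|C\|_{\rm HS}^2$ only gives that constant when read in this summed sense.
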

\begin{proof}
	Using the formula of $U_i$ and the fact that $x_1^2-\frac{1}{3}$ is Dirichlet orthogonal to $E$, we have
	\begin{equation*}
		\frac{1}{4\pi}\left(\int_{\mathbb{S}^2}U_id\mu\right)^2
		\!=\!\frac{1}{4\pi}\frac{25}{36}\left(\!Ce_i,x_1^2\!-\!\frac{1}{3}\!\right)_D^2
		\!\leq\! \frac{1}{4\pi}\frac{25}{36}\|Ce_i\|^2_D\left\|x_1^2\!-\!\frac{1}{3}\right\|_D^2
		\!\leq\! \frac{10}{27}\|Ce_i\|^2_D.
	\end{equation*}
	In view of \eqref{eq:qs-A-small}, we have 
	\[\|Ce_i\|^2_D\leq  \|C\|^2_{\text{HS}}\leq \frac{2\mathcal{H}(\mu)}{\lambda_1(\mu)}.\]
	The conclusion follows from the definition of $\mathcal{E}_{t,s}$.
\end{proof}

\begin{proof}[Proof of the inequality \eqref{improvedHersc_first}]
	For brevity we omit $g$ and write $\lambda_i$ for $\lambda_i(g)$. 
	Using $\lambda_2\leq 4,\lambda_3\leq 6$, we have $\mathcal{H}(g)\geq\frac{1}{\lambda_1}-\frac{13}{12}.$
	Since the function $f(t)=\frac{1}{(t-2)^2}(\frac{1}{t}-\frac{13}{12})$ is decreasing on $(0,\frac{6}{7})$, we know 
	\begin{equation*}
		\frac{\mathcal{H}(g)}{(\lambda_1-2)^2}\geq f\left(\frac{6}{7}\right)=\frac{49}{768}>\frac{9}{160}
	\end{equation*}
	provided $\lambda_1\leq\frac{6}{7}$.
	
	It remains to consider $\frac{6}{7}\leq\lambda_1<2$. Set $0<t=\frac{9}{20}(2-\lambda_1)\leq\frac{18}{35}$ and assume, for contradiction, that $\mathcal{H}(g) < \frac{9}{160}(\lambda_1-2)^2$. Then
	\[
	B_{11}=\frac 32-B_{22}-B_{33} \ge \frac 32 -\frac 1{\lambda_2}-\frac 1{\lambda_3} = -\mathcal H(g) + \frac 1{\lambda_1} > \frac  1{\lambda_1}- \frac{9}{160}(\lambda_1-2)^2,
	\]
	and thus 	
	\begin{equation*}
		\begin{split}
			\frac{1}{4\pi}\int_{\mathbb{S}^2}\! x_1^2d\mu=\frac{2}{3}B_{11} > \frac{2}{3}\left(\frac{1}{\lambda_1}\!-\!\frac{9}{160}(\lambda_1\!-\!2)^2\right) =\frac{3}{9\!-\!10t}-\frac{5}{27}t^2>\frac{1}{3}+\frac{10}{27}t+\frac{2}{9}t^2.
		\end{split}
	\end{equation*}
	Write $m(t)=\frac{1}{3}+\frac{10}{27}t+\frac{2}{9}t^2$. The
		convexity and monotonicity of $F_{t,s(t)}$ give
	\begin{equation*}
		\mathcal{F}_{t,s(t)}\geq F_{t,s(t)}\left(\frac 1{4\pi}\int_{\mathbb{S}^2}x_1^2d\mu\right)\geq F_{t,s(t)}(m(t)).
	\end{equation*}
	Now the inequality \eqref{improvedHersc_first} follows from the following lemma (using the contradict assumption).  
	\end{proof}
	
	\begin{lemma} Suppose $0<t\leq\frac{18}{35}$, then
		\begin{equation}\label{inq_final}
			F_{t,s(t)}(m(t))-\mathcal{E}_{t,s(t)}-\frac{5}{18}t^2-\frac{3}{2}>0.
		\end{equation}
	\end{lemma}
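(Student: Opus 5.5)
The plan is to read \eqref{inq_final} in the setting where it is used: with the contradiction hypothesis $\mathcal H(g)<\frac{9}{160}(\lambda_1-2)^2$ and $t=\frac{9}{20}(2-\lambda_1)$. I will reduce it to an explicit one-variable inequality in $t$ and then verify that inequality in two regimes.

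\textbf{Eliminating $\mathcal E_{t,s(t)}$.} Since $(2-\lambda_1)^2=\frac{400}{81}t^2$, the hypothesis reads $\mathcal H<\frac{5}{18}t^2$. Also $\lambda_1=2-\frac{20}{9}t\ge\frac67$.

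The map $x\mapsto x^2/(1+cx^2)$ is increasing in $x$. By Lemma~\ref{estimates for s(t)}(1) we have $s(t)>t$, and $\frac{4}{21}<\frac27$, so the maximum in the previous lemma is attained by the $s$-term. Bounding that term by $s(t)^2<\frac{121}{100}t^2$ gives
\[
\mathcal E_{t,s(t)}\le \frac{20}{27}\cdot\frac{5t^2/18}{2-20t/9}\cdot\frac{121}{100}t^2=:e(t)=O(t^4).
\]
It therefore suffices to prove the deterministic inequality
\[
\Phi(t):=F_{t,s(t)}(m(t))-\tfrac32-\tfrac{5}{18}t^2-e(t)>0,\qquad 0<t\le\tfrac{18}{35}.
\]

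\textbf{Expansion near $t=0$.} The key structural facts are the following.
\begin{itemize}
\item $F_{0,0}\equiv\frac32$, since $\frac{25}{24}\cdot\frac{36}{25}=\frac32$.
\item $\partial_z F$ vanishes at $t=s=0$.
\item At $z=\frac13$ the first-order terms in $t$ and $s$ cancel, because the trial triple is Dirichlet orthonormal and the $U_i$ are orthogonal to $E$.
\end{itemize}
I will expand $F_{t,s}(m)$ in the three small quantities $t$, $s$ and $m-\frac13=\frac{10}{27}t+\frac29t^2$. I then substitute $s=t+\sigma$ with $\sigma\in[\frac{t^2}{12}-\frac{t^3}{4},\frac{t}{10}]$, using Lemma~\ref{estimates for s(t)}(1),(3). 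The expected outcome is that the $t^2$ coefficient is exactly $\frac{5}{18}$; this is how $\frac{9}{160}$ was chosen. The positivity of $\Phi$ then comes from the $t^3$ term, where the correction $\sigma\approx t^2/12$ contributes through the linear-in-$s$ part of $\partial_z F$ at $z=\frac13$. One needs a positive $t^3$ coefficient that dominates $e(t)=O(t^4)$ and the remaining $O(t^4)$ errors.

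For $0<t<\frac1{10}$ I will make this rigorous as follows. $F$ is a polynomial in $z$ divided by $1+\frac27t^2$ and $1+\frac4{21}s^2$, so I can clear denominators. I bound the $s$-dependence monotonically using the two-sided bounds on $s(t)$; by the previous lemma's computation, $F$ is monotone in $s$ near the relevant $z$. This gives a polynomial lower bound $p(t)>0$ with explicit coefficients.

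\textbf{Remaining range.} For $\frac1{10}\le t\le\frac{18}{35}$ only $s(t)\in(t,\frac{11}{10}t)$ is available. Here I will use monotonicity in $s$ to replace $s(t)$ by the worse endpoint. If the sign of $\partial_sF_{t,s}(m(t))$ is not uniform, I will instead use the implicit relation $R_t(s)=0$ to eliminate $s$. I then check positivity of the resulting explicit function on a finite subdivision of $[\frac1{10},\frac{18}{35}]$, controlling derivatives on each subinterval.

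\textbf{Main obstacle.} The main difficulty is that at order $t^2$ the inequality is an equality. The whole argument therefore depends on getting the sign and size of the $t^3$ coefficient right. This requires the lower bound $s(t)>t+\frac{t^2}{12}-\frac{t^3}{4}$, not merely $s(t)>t$, together with careful bookkeeping of the $m(t)$ shift. The large-$t$ range should be a routine, if tedious, verification.
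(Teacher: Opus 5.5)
Your plan would prove the lemma, and it shares the paper's skeleton:
\begin{itemize}
\item the same bound $\mathcal E_{t,s(t)}\le\frac{25t^2s(t)^2}{27(9-10t)}$, coming from $\mathcal H<\frac5{18}t^2$;
\item the split at $t=\frac1{10}$;
\item the refined bound $s(t)>t+\frac{t^2}{12}-\frac{t^3}{4}$ on the small range, and only $s(t)>t$ on the large range.
\end{itemize}
The reduction differs. The paper uses convexity of $F_{t,s(t)}$ to replace $F_{t,s(t)}(m(t))$ by its tangent line at $z=\frac13$, and uses $\frac1{1+x}\ge1-x$ to clear denominators. This yields an explicit polynomial $\Psi_t(s)$ with the uniform bound $\Psi_t'(s)\ge\frac29$ on $[t,\frac{11}{10}t]$. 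Both ranges then reduce to short polynomial inequalities such as $\Psi_t(s(t))\ge\frac{t^3(907-478t-228t^2)}{3402}$.

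You instead keep $F$ exact at $m(t)$. That loses nothing, but it obliges you to do two things the paper avoids.
\begin{itemize}
\item You must prove that $s\mapsto F_{t,s}(m(t))$ is increasing yourself. The earlier lemma only shows $G'<0$, i.e.\ that $\partial_zF_{t,s}(\frac13)$ is monotone in $s$, not that $F$ is. The needed fact follows from
\[
\partial_sF_{t,s}(z)=\frac{25}{12}\,\frac{(1-z)\bigl(\frac65+s(z-\frac15)\bigr)}{\bigl(1+\frac4{21}s^2\bigr)^2}\Bigl(z-\frac15-\frac{8}{35}s\Bigr).
\]
This is positive for $\frac13\le z<1$ and $0\le s<\frac7{12}$, and here $s(t)<\frac{99}{175}<\frac7{12}$. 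In particular the sign is uniform, so your fallback of eliminating $s$ via $R_t(s)=0$ is unnecessary.
\item You must certify a rational inequality of fairly high degree on $(0,\frac1{10})$, and a subdivision check on $[\frac1{10},\frac{18}{35}]$. Near $t=\frac1{10}$ the margin there is only of order $10^{-4}$.
\end{itemize}
So your route is valid but essentially computer-assisted. The paper's linearization gives a hand-checkable proof and makes the role of $s(t)-t$ transparent.

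Your heuristic bookkeeping is wrong, though, in a way worth fixing. The first-order terms at $z=\frac13$ do not cancel:
\[
F_{t,s}\Bigl(\frac13\Bigr)=\frac{(9-2t)^2}{162\bigl(1+\frac27t^2\bigr)}+\frac{(9+s)^2}{81\bigl(1+\frac4{21}s^2\bigr)}=\frac32+\frac29(s-t)+O(t^2+s^2).
\]
Orthonormality and $U_i\perp E$ kill the linear terms of $\mathcal F_{t,s}$ for the round measure, i.e.\ of $\frac1{4\pi}\int F_{t,s}(x_1^2)\,dv_{g_{\rm rd}}$. They say nothing about the Jensen-reduced value $F_{t,s}(\frac13)$.

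Consequently $\sigma=s(t)-t\approx\frac{t^2}{12}$ enters at order $t^2$, not $t^3$. With $s=t$ one gets $F_{t,t}(m(t))=\frac32+\frac7{27}t^2+O(t^3)$, which falls short of $\frac32+\frac5{18}t^2$ by $\frac{t^2}{54}+O(t^3)$. The term $\frac29\sigma\approx\frac{t^2}{54}$ is exactly what closes this gap, and only then does the positive $t^3$ term decide.

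Your rigorous step substitutes $s_-(t)$ before expanding, so the actual computation will sort this out. But the accounting you describe, with $\sigma$ acting only through $\partial_zF$ at order $t^3$, would not produce the coefficient $\frac5{18}$.
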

	\begin{proof}
		Convexity and $\frac{1}{1+s}\geq 1-s$ gives
		\begin{equation*}
			\begin{split}
				F_{t,s(t)}(m(t))
				\geq\ &F_{t,s(t)}\left(\frac{1}{3}\right)+\left(\frac{10}{27}t+\frac{2}{9}t^2\right)F'_{t,s(t)}\left(\frac{1}{3}\right)\\
				\geq\ &\left(\frac{(9-2t)^2}{162}+\left(\frac{10}{27}t+\frac{2}{9}t^2\right)\frac{(9-2t)(3+t)}{18}\right)\left(1-\frac{2}{7}t^2\right)\\
				&+\frac{(9+s(t))((9-5t-3t^2)+(1+5t+3t^2)s(t))}{81}\left(1-\frac{4}{21}s(t)^2\right)\\
				=:\ &\Psi_t(s(t))+\frac{3}{2}+\frac{5}{18}t^2.
			\end{split}
		\end{equation*}
		
				We first show that $s\mapsto\Psi_t(s)$ is increasing on $t\leq s\leq\frac{11t}{10}$. 
		In fact
		\begin{equation}\label{Psits}
			\begin{split}
				\Psi_t'(s)=\ &\frac{1}{81}\left(18+40t+24t^2+\frac{2}{7}(-101+95t+57t^2)s\right. \\
				&\left.-\frac{4}{7}(18+40t+24t^2)s^2-\frac{16}{21}(1+5t+3t^2)s^3\right). 
			\end{split}
		\end{equation}
		%	From this expresssion we get $\Psi_t'(s)>0$, 
		Since 
		\[
		40t+\frac27(-101+95t+57t^2)s
		\ge 40t-\frac{202}{7}s \ge  \frac{289}{35}t>8t
		\]
		and
		\[
		\frac47(18+40t+24t^2)s^2
		+\frac{16}{21}(1+5t+3t^2)s^3 
		<
		\frac47\cdot 46 s^2
		+\frac{16}{21}\cdot \frac 92 s^3 < 32 t^2+5t^3, 
		\]
		we get 
		\[
		\Psi_t'(s) \ge \frac 1{81} (18+8t+24t^2-32 t^2 -5t^3) \ge \frac 29.
		\]

		If $0<t\leq\frac{1}{10}$,
		, then Lemma~\ref{estimates for s(t)} gives
		\begin{equation*}
			\Psi_t(s(t))\geq\Psi_t(t)+\frac{2}{9}\left(\frac{t^2}{12}-\frac{t^3}{4}\right)=\frac{t^3(907-478t-228t^2)}{3402}>\frac{t^3}{4}.
		\end{equation*}
		Meanwhile, using $s\leq\frac{11t}{10}$ again, we have
		\begin{equation*}
			\mathcal{E}_{t,s}\leq\frac{25t^2s^2}{27(9-10t)}<\frac{t^4}{7}\leq\frac{t^3}{70}.
		\end{equation*}
		This proves \eqref{inq_final} when $0<t\leq\frac{1}{10}$.
		
		Now suppose $\frac{1}{10}<t\leq\frac{18}{35}$.  
		Using the monotonicity of $\Psi_t$ and $s(t)>t$, we have
		\begin{equation*}
			\Psi_t(s(t))>\Psi_t(t)>t^2\left(\frac{8}{25}t-\frac{1}{54}-\frac{1}{5}t^2\right).
		\end{equation*}
	    Also $9-10t\geq\frac{27}{7}$ and $s(t)\leq\frac{11t}{10}$, so 
		\begin{equation*}
			\mathcal{E}_{t,s}<\frac{3t^4}{10}.
		\end{equation*}
		Consequently 
		\begin{equation*}
			\Psi_t(s(t))-\mathcal{E}_{t,s(t)}\geq t^2\left(\frac{8}{25}t-\frac{1}{54}-\frac{1}{2}t^2\right)>0
		\end{equation*}
		whenever $\frac{1}{10}<t\leq\frac{18}{35}$.  This completes the proof of \eqref{inq_final}. 
	\end{proof}

\subsection{Sharpness of the exponent and the coefficient}
\label{subsec:qs-sharpness}

We now show that the exponent two cannot be lowered, and that the coefficient $\frac {9}{160}$ is sharp. For $0\leq\varepsilon\ll1$, consider
\begin{equation}\label{eq:qs-perturbation}
	g_\varepsilon=(1+\varepsilon f)g_{\text{rd}},\qquad f=x_1^2-\frac{1}{3}.
\end{equation}
It is a smooth positive metric of area $4\pi$, and its area measure is Hersch-balanced since  
\begin{equation*}
	\int_{\mathbb{S}^2} x_i\,dv_{g_\varepsilon}=\varepsilon\int_{\mathbb{S}^2} x_ifdv_{g_{\text{rd}}}=0.
\end{equation*}
Let
\begin{equation}\label{eq:qs-psi}
	\psi_i=\sqrt{\frac{3}{4\pi}}x_i,\qquad i=1,2,3.
\end{equation}
Then $\psi_i$ is an $L^2(dv_{g_{\text{rd}}})$-orthonormal basis of the first eigenspace of $\Delta_{g_{\text{rd}}}$. 

The behavior of the eigenvalue problem of $g_{\varepsilon}$ is closely related to the  matrix 
\begin{equation}\label{eq:qs-splitting-matrix}
	M_{ij}=-2\int_{\mathbb{S}^2}f\psi_i\psi_jdv_{g_{\text{rd}}}.
\end{equation}
Standard perturbation theory for the multiple round eigenvalue shows that the one-sided derivatives at $\varepsilon=0^+$ of the three ordered eigenvalue branches are the ordered eigenvalues of $M$; see \cite{sou-ili-2,sou-ili-08}. A direct computation gives
\begin{equation*}
	M=\text{diag}\left(-\frac{8}{15},\frac{4}{15},\frac{4}{15}\right).
\end{equation*}
Furthermore, the first variation of $\mathcal H(g_\varepsilon)$ at $\varepsilon=0^+$ vanishes. The second-variation formula of \cite{Kar-25} gives
	\begin{equation*}
		\frac12\left.\frac{d^2}{d\varepsilon^2}\right|_{\varepsilon=0}\mathcal H(g_\varepsilon)
		=\sum_{i=1}^3 \sum_{\ell\ge2}
		\frac{\|\Pi_{\mathcal H_\ell}(f\psi_i)\|_{L^2}^2}{\ell(\ell+1)-2},
	\end{equation*}
	where $\Pi_{\mathcal H_\ell}$ denotes the $L^2$-orthogonal projection onto the spherical harmonics of degree $\ell$. Since $f$ has degree two, each product $f\psi_i$ has only degree-one and degree-three components. Thus only the degree-three component contributes, and using Lemma \ref{lem:momentsonsphere} one can calculate 
	\begin{equation*}
		\sum_{i=1}^3 \frac{\|\Pi_{\mathcal H_3}(f\psi_i)\|_{L^2}^2}{12-2}=\frac{2}{125}.
\end{equation*}

As a result, for $0\leq\varepsilon\ll 1$, we have
\begin{equation*}
	\lambda_1(g_{\varepsilon})=2-\frac{8}{15}\varepsilon+O(\varepsilon^2),\quad\lambda_i(g_{\varepsilon})=2+\frac{4}{15}\varepsilon+O(\varepsilon^2),i=2,3
\end{equation*}
and 
\begin{equation*}
	\mathcal{H}(g_{\varepsilon})=\frac{2}{125}\varepsilon^2+O(\varepsilon^3).
\end{equation*}
This proves the sharpness of the exponent $2$ and constant $\frac{9}{160}$.

\section{Quadratic reciprocal-sum stability and geometric stability}
\label{sec:quadratic-expanded}

 By uniformization
and diffeomorphism invariance, a smooth metric on $\mathbb{S}^2$ may be represented by
an admissible measure in the conformal class $[g_{\text{rd}}]$.  Hersch normalization
then gives \eqref{eq:qs-balance} without changing any eigenvalue. 

\subsection{Proof of Theorem \ref{main_thm_sphere}}
Let $A$ be the matrix from Section~\ref{subsec:matrix-prelim}. We first prove a quantitative improvement of \eqref{Hersch_mu}.
\begin{proposition}\label{Prop:Hlowerbdbytrace}
	Let $\mu$ be a balanced admissible measure on $\mathbb{S}^2$ with mass $4\pi$. If $A \ne 0$, then 
	\begin{equation*}
		\mathcal{H}(\mu)\geq\frac{(\text{\rm tr}A^2)^2}{2\left(\text{\rm tr}A^2+\frac{4}{7}\text{\rm tr}A^3\right)}.
	\end{equation*}
\end{proposition}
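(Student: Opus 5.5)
The plan is a second-order trial-subspace argument. I will tilt each coordinate function $e_i$ in the degree-three direction $U_i(A)$ and feed the resulting triple into the Ky Fan principle for $K_\mu$. The inequality should then come out of a scalar quadratic in a tilt parameter $t$, optimized at
\[
t=\frac{\operatorname{tr}A^2}{\operatorname{tr}A^2+\frac47\operatorname{tr}A^3}.
\]

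\textbf{Setup.} Keep $B$ diagonal, as in Section~\ref{subsec:matrix-prelim}; then $A$ and $A^2$ are diagonal too. For $t\in\mathbb R$ set $w_i=e_i+tU_i(A)$, with $U_i=U_i(A)$ as defined after Proposition~\ref{cor:mx-A-C}. The same moment computation that produced $\|U_i(H)\|_D^2$, using \eqref{eq:sixth-moment}, should give the full Gram matrix
\[
(U_i,U_j)_D=\frac5{21}\operatorname{tr}A^2\,\delta_{ij}+\frac27(A^2)_{ij},
\]
which is diagonal. The $U_i$ are of degree three, hence Dirichlet-orthogonal to $E$. So the $w_i$ are mutually Dirichlet-orthogonal with
\[
\|w_i\|_D^2=1+t^2g_i,\qquad g_i:=\frac5{21}\operatorname{tr}A^2+\frac27(A^2)_{ii}.
\]
Subtracting $\mu$-means puts the functions in $\mathscr H_\mu$ without changing Dirichlet norms. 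The max--min principle for $K_\mu$, exactly as in Section~\ref{sec:sharp-first-remainder}, then gives
\[
\frac32+\mathcal H(\mu)\ \ge\ \sum_{i=1}^3\frac{B_{ii}+2t\,n_i+t^2k_i}{1+t^2g_i},
\qquad n_i=(Ce_i,U_i)_D,\quad k_i=(K_\mu U_i,U_i)_D\ge0.
\]

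\textbf{Key identities.} Tracking normalizations in \eqref{(AH)_2} gives
\[
\langle A,H\rangle_{\rm HS}=2\sum_i\bigl(Ce_i,U_i(H)\bigr)_D,
\]
so $\sum_i n_i=\frac12\operatorname{tr}A^2$. Since $A$ is trace-free, the Gram formula gives $\sum_i g_i=\operatorname{tr}A^2$ and
\[
\sum_i B_{ii}g_i=\frac12\operatorname{tr}A^2+\frac27\operatorname{tr}A^3=\frac12\Bigl(\operatorname{tr}A^2+\frac47\operatorname{tr}A^3\Bigr).
\]

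\textbf{Reduction to a quadratic.} At second order in $t$ the right-hand side expands as
\[
\frac32+t\operatorname{tr}A^2-\frac{t^2}2\Bigl(\operatorname{tr}A^2+\frac47\operatorname{tr}A^3\Bigr)+(\text{higher order}).
\]
Maximizing $t\,a-\frac{t^2}{2}b$, with $a=\operatorname{tr}A^2$ and $b=\operatorname{tr}A^2+\frac47\operatorname{tr}A^3$, gives exactly $a^2/(2b)$. Note $b>0$: $|\operatorname{tr}A^3|\le\frac1{\sqrt6}(\operatorname{tr}A^2)^{3/2}$ for trace-free $3\times3$ matrices, and $\operatorname{tr}A^2\le\frac32$ since $0\le B\le\frac32$.

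\textbf{Main obstacle.} The inequality must hold exactly, not just to second order, so the denominators $1+t^2g_i$ have to be handled. For the $B_{ii}$ terms, $B_{ii}\ge0$ and $\frac1{1+x}\ge1-x$ give $\sum_i B_{ii}/(1+t^2g_i)\ge\frac32-t^2\sum_i B_{ii}g_i$, which is the desired quadratic term. The trouble is the cross terms $2tn_i/(1+t^2g_i)$: individual $n_i$ may be negative, so they cannot simply be bounded by $2tn_i$.

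My first attempt would be to use the discarded $t^2k_i\ge0$ terms together with the normalization freedom. Concretely, replace $w_i$ by $(e_i+tU_i)/(1+t^2g_i)^{1/2}$ but optimize a separate $t_i$ for each $i$. Alternatively, use $w_i=\sqrt{1-t^2g_i}\,e_i+tU_i$, which is exactly unit-normed and so removes the denominators altogether. That would give
\[
\sum_i\Bigl(B_{ii}(1-t^2g_i)+2t\sqrt{1-t^2g_i}\,n_i\Bigr)
\]
and leave only the control of $\sqrt{1-t^2g_i}$ against the sign of $n_i$.

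If neither works directly, I would fall back on \eqref{eq:qs-A-small} and Proposition~\ref{cor:mx-A-C}. These bound $|n_i|$ by $\|C\|_{\rm HS}\|U_i\|_D$, so the error terms are of higher order in $\operatorname{tr}A^2$ and can be absorbed, at least whenever the claimed lower bound is smaller than the trivial bound $\mathcal H\ge0$ plus a cubic correction. The large-$A$ case is then handled separately using $\operatorname{tr}A^2\le\frac32$.
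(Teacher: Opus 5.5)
Your setup matches the paper's: the same triple $w_i=e_i+tU_i(A)$, the same Gram matrix, and the same identities $\sum_i n_i=\frac12\operatorname{tr}A^2$ and $\sum_i B_{ii}g_i=\frac12\operatorname{tr}A^2+\frac27\operatorname{tr}A^3$. Even your optimal $t$ is the paper's. But the proposal stops where the exact inequality has to be proved, and none of the routes you list can close it, because you use only $k_i=(K_\mu U_i,U_i)_D\ge0$.

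With that information alone, fix $i$ and take any normalized trial vector in $\operatorname{span}\{e_i,U_i\}$: common $t$, separate $t_i$, or your $\sqrt{1-t^2g_i}\,e_i+tU_i$. The best it can give is the top eigenvalue of $\left(\begin{smallmatrix}B_{ii}&n_i/\sqrt{g_i}\\ n_i/\sqrt{g_i}&0\end{smallmatrix}\right)$, namely $\frac12\bigl(B_{ii}+\sqrt{B_{ii}^2+4n_i^2/g_i}\bigr)$. This is strictly less than $B_{ii}+n_i^2/(g_iB_{ii})$ when $n_i\neq0$. Write $S=\frac12\operatorname{tr}A^2$ and $D=\sum_iB_{ii}g_i$, and consider the configuration $n_i=\frac{S}{D}\,g_iB_{ii}$, which nothing in your argument rules out. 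There the quantities $B_{ii}+n_i^2/(g_iB_{ii})$ sum to exactly $\frac32+S^2/D$, so your lower bound falls strictly short of the claim. The fallback through \eqref{eq:qs-A-small} and Proposition~\ref{cor:mx-A-C} has the same defect. Absorbing higher-order errors gives the claimed expression minus an error term, or the claim in a restricted regime, not the stated inequality.

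The missing idea is to keep $k_i$ and couple it to $n_i$ by Cauchy--Schwarz for the positive semidefinite form $(K_\mu\cdot,\cdot)_D$. This gives $k_iB_{ii}\ge n_i^2$, that is,
\[
(K_\mu w_i,w_i)_D\ \ge\ \frac{(K_\mu e_i,w_i)_D^2}{(K_\mu e_i,e_i)_D}=\frac{(B_{ii}+tn_i)^2}{B_{ii}}.
\]
Each summand becomes $\frac{(B_{ii}+tn_i)^2}{B_{ii}(1+t^2g_i)}$, a square over a positive quantity, so the signs of the individual $n_i$ no longer matter. The Engel form of Cauchy--Schwarz, $\sum_i x_i^2/y_i\ge(\sum_i x_i)^2/\sum_i y_i$, then gives
\[
\frac32+\mathcal H(\mu)\ \ge\ \frac{\bigl(\frac32+tS\bigr)^2}{\frac32+t^2D}.
\]
At $t=S/D$ (your $t$) the right-hand side equals exactly $\frac32+S^2/D$, and $S^2/D$ is the claimed bound. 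Here $D>0$ because $B_{ii}\ge0$, $\sum_iB_{ii}=\frac32$ and $g_i>0$. This is the paper's proof: the Engel inequality absorbs the denominators, with no expansion and no error terms. Equivalently, choose $t_i=n_i/(g_iB_{ii})$ separately; each index then gains exactly $n_i^2/(g_iB_{ii})$, and $\sum_i n_i^2/(g_iB_{ii})\ge S^2/D$ by Cauchy--Schwarz.
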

\begin{proof} 
 After a rotation
we may assume that $A$ is diagonal. 
%If $A=0$, the conclusion is immediate, so assume $A\ne0$. 
For $t\ge0$, set
\[
 v_i=e_i+tU_i(A),\qquad i=1,2,3.
\]
Since $A$ is diagonal, the functions $U_i(A)$ are mutually Dirichlet-orthogonal;
they are also orthogonal to $E$. Thus the three functions $v_i$ are mutually
Dirichlet-orthogonal. Moreover,
\[
 \|v_i\|_D^2
 =1+t^2\left(\frac{5}{21}\operatorname{tr}A^2
 +\frac27(A^2)_{ii}\right).
\]
By Cauchy--Schwarz for the positive form induced by $K_\mu$,
\begin{equation*}
\begin{split}
 (K_\mu v_i,v_i)_D
 &\ge (K_\mu e_i,e_i)_D+2t(K_\mu e_i,U_i(A))_D
 +t^2\frac{(K_\mu e_i,U_i(A))_D^2}{(K_\mu e_i,e_i)_D}\\
 &=\frac{(K_\mu e_i,e_i+tU_i(A))_D^2}{(K_\mu e_i,e_i)_D}.
\end{split}
\end{equation*}
The trace variational principle applied to the Dirichlet-orthogonal triple
$v_1,v_2,v_3$ therefore gives
\begin{equation*}
 \mathcal H(\mu)+\frac32
 \ge \frac{(\frac32+tS)^2}{\frac32+t^2D},
\end{equation*}
where
\[
 S:=\sum_{i=1}^3(K_\mu e_i,U_i(A))_D,
 \qquad
 D:=\sum_{i=1}^3(K_\mu e_i,e_i)_D\|U_i(A)\|_D^2.
\]
Here $D>0$ when $A\ne0$. Choosing $t=\frac{S}{D}$ gives the exact identity
\[
 \frac{(\frac32+tS)^2}{\frac32+t^2D}
 =\frac32+\frac{S^2}{D}.
\]
Furthermore,
\begin{equation*}
\begin{split}
 D
  =\sum_{i=1}^3\left(\frac12+A_{ii}\right)
 \left(\frac{5}{21}\operatorname{tr}A^2+\frac27(A^2)_{ii}\right) =\frac12\operatorname{tr}A^2+\frac27\operatorname{tr}A^3,
\end{split}
\end{equation*}
and the first identity in \eqref{(AH)_2} gives
\[
 S=\sum_{i=1}^3(Ce_i,U_i(A))_D=\frac12\operatorname{tr}A^2.
\]
Substitution yields
\[
 \mathcal H(\mu)\ge
 \frac{(\operatorname{tr}A^2)^2}
 {2\left(\operatorname{tr}A^2+\frac47\operatorname{tr}A^3\right)},
\]
as claimed. 
\end{proof}

\begin{proof}[Proof of Theorem \ref{main_thm_sphere}]
We need the following two lemmas. Let $a=(a_1,a_2,a_3)$ be a real triple with $a_i \ge -\frac 12$, and set
\begin{equation*} \Psi(a)=\frac{(a_1^2+a_2^2+a_3^2)^2}{2\left(a_1^2+a_2^2+a_3^2+\frac{4}{7}(a_1^3+a_2^3+a_3^3)\right)}\quad(a\ne0),\qquad \Psi(0):=0. 
\end{equation*}
 
\begin{lemma}\label{lem:Psi-Schur}
	The function $\Psi$ is Schur-convex on $
		\left\{a\in\mathbb{R}^3:\sum_i a_i=0,a_i\geq-\frac{1}{2}\right\}.$
\end{lemma}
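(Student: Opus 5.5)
The plan is to use the Schur--Ostrowski criterion. $\Psi$ is symmetric in $(a_1,a_2,a_3)$, and the domain $\{\sum_i a_i=0,\ a_i\ge -\frac12\}$ is a convex, permutation-invariant subset of a hyperplane. So it suffices to check
\[
(a_i-a_j)\bigl(\partial_i\Psi-\partial_j\Psi\bigr)\ge0
\]
at points where $\Psi$ is differentiable, i.e.\ away from $a=0$. The case $a=0$ is handled by continuity: $\Psi\to0$, and $0$ is majorized by every point of the domain, where $\Psi\ge0$. The derivatives are those of the formula for $\Psi$ on all of $\mathbb R^3$; the Schur--Ostrowski condition for a symmetric function restricted to the hyperplane is exactly this pairwise condition.

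First I check that the denominator is positive. Write $P=\sum a_i^2$ and $Q=\sum a_i^3$. Then
\[
P+\tfrac47Q=\sum_i a_i^2\bigl(1+\tfrac47 a_i\bigr)>0
\]
for $a\neq0$, because $a_i\ge-\frac12$ gives $1+\frac47a_i\ge\frac57$. (This is the quantity $2D$ from the proof of Proposition~\ref{Prop:Hlowerbdbytrace}.) Hence $\Psi$ is smooth on the domain minus the origin.

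Next I compute the gradient with $c=\frac47$ and $\Psi=P^2/(2(P+cQ))$:
\[
\partial_i\Psi=\frac{4Pa_i(P+cQ)-P^2(2a_i+3ca_i^2)}{2(P+cQ)^2}.
\]
Subtracting the $j$-th derivative and factoring out $a_i-a_j$ gives
\[
\partial_i\Psi-\partial_j\Psi=\frac{P(a_i-a_j)\bigl[2P+4cQ-3cP(a_i+a_j)\bigr]}{2(P+cQ)^2}.
\]
So everything reduces to showing that the bracket is nonnegative.

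The key step, which I expect to be the only real point, uses the constraint $\sum a=0$ to write $a_i+a_j=-a_k$. The bracket then becomes
\[
2P+\tfrac{16}{7}Q+\tfrac{12}{7}Pa_k .
\]
Using $a_k\ge-\frac12$, this is at least
\[
2P+\tfrac{16}{7}Q-\tfrac67P=\tfrac87(P+2Q)=\tfrac87\sum_i a_i^2(1+2a_i)\ge0,
\]
again because $a_i\ge-\frac12$. This gives $(a_i-a_j)(\partial_i\Psi-\partial_j\Psi)\ge0$ for every pair, and Schur--Ostrowski yields Schur-convexity of $\Psi$ on the stated set.
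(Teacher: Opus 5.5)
Your proof is correct and takes the same route as the paper. You compute the Schur--Ostrowski quantity, which after substituting $a_i+a_j=-a_k$ equals $\frac{P(a_i-a_j)^2}{(P+\frac47Q)^2}\bigl(P+\frac87Q+\frac67Pa_k\bigr)$, exactly the paper's expression, and you treat the origin separately; your explicit lower bound $\frac87(P+2Q)=\frac87\sum_i a_i^2(1+2a_i)\ge0$ for twice the bracket fills in the nonnegativity step that the paper only asserts.
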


\begin{proof}
Put $X=a_1^2+a_2^2+a_3^2,Y=a_1^3+a_2^3+a_3^3$. For distinct $i,j$ and the remaining index $k$,
\begin{equation*}
	(a_i-a_j)\left(\frac{\partial\Psi}{\partial a_i}-\frac{\partial\Psi}{\partial a_j}\right)
 	=\frac{X(a_i-a_j)^2}{(X+\frac{4}{7}Y)^2}\left(X+\frac87Y+\frac67Xa_k\right).
\end{equation*}
Using $a_i\geq-\frac{1}{2}$, the last bracket is nonnegative. The standard differential criterion \cite{MOA} proves Schur convexity away from the origin, and the definition $\Psi(0)=0$ extends it continuously to the whole stated domain.
\end{proof}

In the following lemma, we write $s=\frac{1}{2}-\frac{1}{\lambda_3},u=\frac{1}{2}-\frac{1}{\lambda_2}$.
\begin{lemma}\label{lem:nonlinear-water}
	Let $g$ be a Riemannian metric on $\mathbb S^2$ with
$\operatorname{Area}(\mathbb S^2,g)=4\pi$. Assume $\lambda_3(g)>2$. Then
\begin{enumerate}
	\item $\mathcal H(g)\ge \frac{21s^2}{4(7-2s)}$.
	\item Additionally if $\lambda_2(g)\ge2$, then $\mathcal H(g)\ge s^2+\frac57u(u+s)\geq\frac57(u+s)^2$.
	
\end{enumerate}
\end{lemma}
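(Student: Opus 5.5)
The plan is to derive both parts from Proposition~\ref{Prop:Hlowerbdbytrace} combined with the Schur convexity of $\Psi$ in Lemma~\ref{lem:Psi-Schur}, reducing everything to evaluating $\Psi$ at an extremal triple.

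\emph{Setup.} Represent $g$ by a balanced admissible measure $\mu$ of mass $4\pi$; this is possible by uniformization and Lemma~\ref{lem:hersch-normalization}, and it changes no eigenvalue. Diagonalize $B$ with $B_{11}\ge B_{22}\ge B_{33}$, and put $a_i=A_{ii}=B_{ii}-\frac12$. Then:
\begin{itemize}
\item $\sum_i a_i=0$, since $\operatorname{tr}B=\frac32$;
\item $a_i\ge-\frac12$, since $B_{ii}=\frac{3}{8\pi}\int x_i^2\,d\mu\ge0$;
\item $\operatorname{tr}A^2=\sum a_i^2$ and $\operatorname{tr}A^3=\sum a_i^3$, so Proposition~\ref{Prop:Hlowerbdbytrace} reads $\mathcal H(g)\ge\Psi(a)$ (trivially also when $A=0$).
\end{itemize}
The interlacing \eqref{eq:qs-interlace} gives $a_3\le\frac1{\lambda_3}-\frac12=-s$ and $a_2\le -u$. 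The hypothesis $\lambda_3>2$ means $s>0$, and $a_3\ge-\frac12$ forces $s\le\frac12$.

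\emph{Part (1).} I will show that $a$ majorizes $(\frac s2,\frac s2,-s)$. Both triples sum to zero. Since $a_3\le -s<0$, we get $a_1+a_2=-a_3\ge s$, hence $a_1\ge \frac s2$, and these are exactly the partial-sum conditions. The comparison triple lies in the domain of Lemma~\ref{lem:Psi-Schur} because $s\le\frac12$. Schur convexity then gives $\mathcal H\ge\Psi(\frac s2,\frac s2,-s)$. For this triple $X=\frac32 s^2$ and $Y=-\frac34 s^3$, so
\[
\Psi=\frac{\frac94 s^4}{3s^2-\frac67 s^3}=\frac{21s^2}{4(7-2s)},
\]
which is exactly the claimed bound.

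\emph{Part (2).} Now $u\ge0$, and $\lambda_2\le\lambda_3$ gives $u\le s$. Here $a$ majorizes $(u+s,-u,-s)$: indeed $a_1=-a_2-a_3\ge u+s$ and $a_1+a_2=-a_3\ge s$. So $\mathcal H\ge\Psi(u+s,-u,-s)$. Write $Q=u^2+us+s^2$, so that $X=2Q$ and $Y=3us(u+s)$, and this becomes
\[
\mathcal H\ge\frac{Q^2}{Q+\frac67 us(u+s)}.
\]
Set $p=\frac27u(u+s)$ and $q=\frac67us(u+s)$. The target $s^2+\frac57u(u+s)$ equals $Q-p$. Comparing $Q^2\ge(Q-p)(Q+q)$ reduces to $Q(q-p)\le pq$, that is,
\[
Q(3s-1)\le\tfrac67 us(u+s).
\]
This holds trivially when $s\le\frac13$, since then the left side is $\le0$.

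\emph{Main obstacle.} When $\frac13<s\le\frac12$ and $u$ is small, the comparison above fails, so $\Psi$ alone is not enough. In that regime I would first try to add the linear slack that Proposition~\ref{Prop:Hlowerbdbytrace} discards, namely the exact identity $\mathcal H=\sum_i(\frac1{\lambda_i}-B_{ii})$ together with $\frac1{\lambda_1}-\frac12\ge a_1$. Taking a convex combination of $\Psi(a)$ with this linear lower bound should cover the case of large $s$. If that also fails, I would use the cruder bound $\lambda_2\le4$, $\lambda_3\le6$ in that range, in the spirit of the first step of the proof of \eqref{improvedHersc_first}. Finally, the last inequality $s^2+\frac57u(u+s)\ge\frac57(u+s)^2$ is elementary algebra once $u$ and $s$ are correctly compared; I would recheck exactly which relation between $u$ and $s$ it requires before finalizing.
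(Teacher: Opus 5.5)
\textbf{Part (2) has two unfinished points: a regime you leave open, and a final inequality that is false as stated.}

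\textbf{What is correct.} Your setup, part (1), and the reduction in part (2) are right. Part (1) is cleaner than the paper's argument. Since $a_1+a_2=-a_3\ge s$, the triple $a$ majorizes $(\frac s2,\frac s2,-s)$ for every value of $u$. This makes the paper's case distinction on the sign of $u+\frac s2$, and its explicit check that $\Phi(s,u)-\phi(s)\ge0$, unnecessary. You only need $s\le\frac12$, which you correctly get from $a_3\ge-\frac12$. In part (2), the majorization $a\succ(u+s,-u,-s)$ is correct. So is the exact reduction of $\Psi(u+s,-u,-s)\ge s^2+\frac57u(u+s)$ to $Q(3s-1)\le\frac67us(u+s)$.

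\textbf{The open regime.} You leave $\frac13<s\le\frac12$ unresolved, with fallbacks that are not worked out. The missing input is that this regime is empty. At area $4\pi$ one has $\lambda_3(g)\le6$ by \cite{knpp}, so $s=\frac12-\frac1{\lambda_3}\le\frac13$, and your inequality $Q(3s-1)\le\frac67us(u+s)$ holds trivially. This is the paper's argument: with $w=u(u+s)$ it bounds $\frac67sw\le\frac27w$ in the denominator. It then uses $\frac{(s^2+w)^2}{s^2+\frac97w}\ge s^2+\frac57w$, which is equivalent to $\frac4{49}w^2\ge0$. You do mention $\lambda_3\le6$, but only as a crude estimate to use ``in that range''. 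You do not notice that it removes the range altogether, so the convex-combination idea is not needed.

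\textbf{The last link.} Your hesitation is justified, and this step cannot be finalized because it is false in general. We have $s^2+\frac57u(u+s)-\frac57(u+s)^2=\frac s7(2s-5u)$, which is negative as soon as $u>\frac25s$. This happens under the hypotheses of (2) whenever $\lambda_2=\lambda_3>2$. For instance, in the family $g_\varepsilon$ of Section~\ref{subsec:qs-sharpness}, rotational symmetry about the $x_1$-axis gives $\lambda_2=\lambda_3=2+\frac4{15}\varepsilon+O(\varepsilon^2)$. So $u=s>0$, and $s^2+\frac57u(u+s)=\frac{17}7s^2<\frac{20}7s^2=\frac57(u+s)^2$. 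The paper's proof claims the difference equals $\frac27s^2$, which drops the term $-\frac57us$.

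What is true for $0\le u\le s$ is $s^2+\frac57u(u+s)\ge\frac{17}{28}(u+s)^2$, with equality at $u=s$. This holds because $\tau\mapsto\bigl(1+\frac57\tau(1+\tau)\bigr)/(1+\tau)^2$ is decreasing on $[0,1]$. A correct proof of (2) should therefore stop at $\mathcal H\ge s^2+\frac57u(u+s)$, or replace $\frac57$ by $\frac{17}{28}$ in the last step. The paper uses the $\frac57$ version in Case 1 of the proof of Theorem~\ref{main_thm_sphere}, so the numerical constant there must be rechecked.
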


\begin{proof}
Set $a_i=A_{ii}$. Then $a_2\le-u$ and $a_3\le-s$. If $u\le-s/2$,
every admissible $a$ majorizes $(s/2,s/2,-s)$, whereas if $u\ge-s/2$ it
majorizes $(u+s,-u,-s)$. Proposition~\ref{Prop:Hlowerbdbytrace} and
Lemma~\ref{lem:Psi-Schur} therefore give
\begin{equation}\label{eq:nonlinear-water}
	\mathcal H(g)\geq
		\begin{cases}
			\displaystyle \phi(s):=\frac{21s^2}{4(7-2s)},&\displaystyle u\leq-\frac{s}{2},\\
			\displaystyle \Phi(s,u):=
			\frac{(u^2+us+s^2)^2}
			{u^2+us+s^2+\frac{6}{7}us(u+s)},&\displaystyle u\geq-\frac{s}{2}.
		\end{cases}
\end{equation}
To prove (1), one only needs to consider $-\frac{s}{2}\leq u<0$. Since $\lambda_3(g)\leq 6$ by \cite{knpp}, one has $0<s\leq\frac{1}{3}$. Direct subtraction gives
\begin{equation*}
	\Phi(s,u)-\phi(s)=\left(u+\frac{s}{2}\right)^2
	\frac{\frac{3s^2}{4}(1-\frac{10s}{7})
	+(1-\frac{2s}{7})(u+\frac{s}{2})^2}
	{(1-\frac{2s}{7})
	\left(\frac{3s^2}{4}(1-\frac{2s}{7})
	+(1+\frac{6s}{7})(u+\frac{s}{2})^2\right)}\geq0.
\end{equation*}

For (2), $u\geq0$, and hence $u(u+s)\geq0$. Using $s\leq\frac{1}{3}$ in the second
branch of \eqref{eq:nonlinear-water},
\begin{equation*}
	\Phi(s,u)=\frac{(s^2+u(u+s))^2}{s^2+u(u+s)+\frac{6}{7}su(u+s)}
			\geq\frac{(s^2+u(u+s))^2}{s^2+\frac{9}{7}u(u+s)}
			\geq s^2+\frac{5}{7}u(u+s),
\end{equation*}
where the last inequality is equivalent to
$\frac4{49}(u(u+s))^2\geq 0$. Finally, $s^2+\frac{5}{7}u(u+s)-\frac{5}{7}(u+s)^2=\frac{2}{7}s^2\geq0$. This completes the proof.
\end{proof}

Now we start to prove Theorem \ref{main_thm_sphere}.  For brevity, we omit $g$ in $\lambda_i(g)$ and $\mathcal H(g)$, and  set $\mathcal{S}:=\sum_{i=1}^3(\lambda_i-2)^2$. From \cite{knpp} we know $\lambda_i\leq 2i$, hence $\mathcal{S}\leq 24$. We will repeatedly use 
\begin{equation*}
	(\lambda_i-2)^2=4\lambda_i^2\left(\frac{1}{2}-\frac{1}{\lambda_i}\right)^2,i=1,2,3.
\end{equation*}
To achieve the conclusion, we split the proof into four cases.

\textbf{Case 1}: $\lambda_2 \ge 2$.  Assume the contrary $\mathcal{S}>183\mathcal{H}$.   Lemma \ref{lem:nonlinear-water} (2) gives
\begin{equation*}
	\mathcal{H}\geq\left(\frac{1}{2}-\frac{1}{\lambda_3}\right)^2+\frac{5}{7}\left(\frac{1}{2}-\frac{1}{\lambda_2}\right)\left(\left(\frac{1}{2}-\frac{1}{\lambda_2}\right)+\left(\frac{1}{2}-\frac{1}{\lambda_3}\right)\right)\geq\frac{5}{7}\left(\mathcal{H}+\frac{1}{2}-\frac{1}{\lambda_1}\right)^2.
\end{equation*}
Multiplying 144 on both sides of the first inequality gives
\begin{equation*}
	(\lambda_2-2)^2+(\lambda_3-2)^2\leq 64\left(\frac{1}{2}-\frac{1}{\lambda_2}\right)^2+144\left(\frac{1}{2}-\frac{1}{\lambda_3}\right)^2 \le  144\mathcal{H}.
\end{equation*}
Since $\mathcal{S}\leq 24$, we have $\mathcal{H}<\frac{2}{15}$ by the assumption. Thus 
\begin{equation*}
	(\lambda_1-2)^2\leq 16\left(\frac{1}{2}-\frac{1}{\lambda_1}\right)^2\leq 16\left(\mathcal{H}+\sqrt{\frac{7}{5}\mathcal{H}}\right)^2\leq 16\left(\sqrt{\frac{2}{15}}+\sqrt{\frac{7}{5}}\right)^2\mathcal{H}<39\mathcal{H}.
\end{equation*}
Hence $\mathcal{S}<(144+39)\mathcal{H}=183\mathcal{H}$, a contradiction.

\textbf{Case 2}: $\lambda_2<2$ and $\lambda_3\geq4$. In this case $(\lambda_i-2)^2\leq2\left(\frac{1}{\lambda_i} -\frac 12\right)$ for $i=1,2$. So
\begin{equation*}
	\mathcal{S}\leq 2 \left(\frac{1}{\lambda_1}-\frac{1}{2}\right)+ 2 \left(\frac{1}{\lambda_2}-\frac{1}{2}\right) +(\lambda_3-2)^2=2\left(\mathcal{H}+\left(\frac{1}{2}-\frac{1}{\lambda_3}\right)\right)+(\lambda_3-2)^2.
\end{equation*}
Consequently
\begin{equation*}
	\frac{\mathcal{S}}{\mathcal{H}}\leq 2+\frac{2\left(\frac{1}{2}-\frac{1}{\lambda_3}\right)+(\lambda_3-2)^2}{\phi\left(\frac{1}{2}-\frac{1}{\lambda_3}\right)}=:R\left(\frac{1}{2}-\frac{1}{\lambda_3}\right),
\end{equation*}
where
\begin{equation*}
	R(s)=2+\frac{8(7-2s)}{21s}+\frac{64(7-2s)}{21(1-2s)^2}.
\end{equation*}
A direct computation gives
\begin{equation*}
	R'(s)=\frac{8(2s+1)(12s^2+56s-7)}{21s^2(1-2s)^3}>0,
 	\qquad \frac{1}{4}\leq s\leq\frac{1}{3}.
\end{equation*}
Therefore $\frac{\mathcal{S}}{\mathcal{H}}\leq R\left(\frac{1}{3}\right)<183$.

\textbf{Case 3}: $\lambda_2<2$ and $2<\lambda_3\leq 4 $. Assume the contrary $\mathcal{S}>183\mathcal{H}$. By Lemma \ref{lem:nonlinear-water} (1),
\begin{equation*}
	\mathcal{H}\geq\phi\left(\frac{1}{2}-\frac{1}{\lambda_3}\right)\geq\frac{3}{4}\left(\frac{1}{2}-\frac{1}{\lambda_3}\right)^2
\end{equation*}
As above, using $\mathcal{H}<\frac{2}{15}$ again,
\begin{equation*}
	\begin{split}
		(\lambda_1-2)^2+(\lambda_2-2)^2&\leq 16\left(\frac{1}{2}-\frac{1}{\lambda_1}\right)^2+16\left(\frac{1}{2}-\frac{1}{\lambda_2}\right)^2\\
		&\leq16\left(\mathcal{H}+\left(\frac{1}{2}-\frac{1}{\lambda_3}\right)\right)^2\\
		&\leq 32\left(\mathcal{H}+\frac{4}{3}\right)\mathcal{H}<48\mathcal{H}.
	\end{split}
\end{equation*}
and
\begin{equation*}
	(\lambda_3-2)^2\leq 64\left(\frac{1}{2}-\frac{1}{\lambda_3}\right)^2<86\mathcal{H}.
\end{equation*}
Hence $\mathcal{S}<(48+86)\mathcal{H}=134\mathcal{H}$, a contradiction.

\textbf{Case 4}: $\lambda_3\leq 2$. We have
\begin{equation*}
 	\mathcal S\leq16\sum_i\left(\frac{1}{2}-\frac{1}{\lambda_i}\right)^2\leq16\left(\sum_{i}\left(\frac{1}{2}-\frac{1}{\lambda_i}\right)\right)^2=16\mathcal H^2.
\end{equation*}
Thus $\mathcal S<183\mathcal H$ when $\mathcal H\le1$, while for $\mathcal H>1$ the trivial bound $\mathcal S\le12$ suffices.
\end{proof}

\subsection{Proof of Theorem \ref{thm:sharp-reciprocal-hierarchy}}

We first keep the balancing normalization \eqref{eq:qs-balance}. The decomposition \eqref{decomposition_nu} of $d\nu$ and the definition of $A$ give
\begin{align*}
	d\nu=d\mu-2x^T\left(\frac12I+A\right)x\,dv_{g_{\rm rd}} =d\mu-(1+2x^TAx){dv_{g_{\rm rd}}}.
\end{align*}
In other words,
\begin{equation}\label{eq:qs-mu-decomp}
		d\mu-{dv_{g_{\rm rd}}}=d\nu+2x^TAx{dv_{g_{\rm rd}}}.
\end{equation}
We estimate the two terms $d\nu$ and $x^TAx{dv_{g_{\rm rd}}}$ separately.

First,  for every smooth $f$, we have
\begin{equation*}
	\left|\int_{\mathbb{S}^2} fd\nu\right|=\left|\sum_i(Cx_i,x_if)_D\right|
	\leq\sqrt{\frac{8\pi}{3}}\|C\|_{\mathrm{HS}}\left(\sum_i\|x_if\|_D^2\right)^{1/2}.
\end{equation*}
The multiplier term can be computed exactly.  Since
$\sum_i x_i^2=1$, $\sum_i x_i\,dx_i=\frac12d|x|^2=0$, and
$\sum_i|dx_i|^2=2$ pointwise, we have
\begin{equation*}
	\sum_i\|x_if\|_D^2=\int_{\mathbb{S}^2}\sum_i|x_i\,df+f\,dx_i|^2 dv_{g_{\rm rd}}=\int_{\mathbb{S}^2}(|df|^2+2f^2){dv_{g_{\rm rd}}}\leq2\|f\|_{W^{1,2}}^2.
\end{equation*}
Using \eqref{eq:qs-A-small}, it follows that
\begin{equation}\label{first_part}
\|d\nu\|_{W^{-1,2}}
	\le\sqrt{\frac{16\pi}{3}}\,\|C\|_{\mathrm{HS}}
	\le4\sqrt{\frac{2\pi}{3\lambda_1(\mu)}}\,\mathcal H(\mu)^{1/2}.
\end{equation}

Second, the fourth-moment formula \eqref{eq:qs-fourth} gives, for every
symmetric trace-free $A$, a function in $L^2$ defines a $W^{-1,2}$ distribution with norm at most its $L^2$ norm. Therefore, 
\begin{equation}\label{second_part}
	\begin{split}
	\|x^TAx\,{dv_{g_{\rm rd}}}\|_{W^{-1,2}} \leq\left(\int_{\mathbb{S}^2}(x^TAx)^2{dv_{g_{\rm rd}}}\right)^{1/2}
		=\sqrt{\frac{8\pi}{15}}\|A\|_{\mathrm{HS}} \leq8\sqrt{\frac{\pi}{15\lambda_1(\mu)}}\mathcal{H}(\mu)^{1/2},
	\end{split}
\end{equation}
where the equality uses \eqref{eq:qs-fourth}, and the last inequality uses \eqref{eq:qs-A-small} and Proposition \ref{cor:mx-A-C}.

Combining \eqref{eq:qs-mu-decomp}, \eqref{first_part}, and \eqref{second_part}, and using $\left(4\sqrt{\frac{2\pi}{3}}+16\sqrt{\frac{\pi}{15}}\right)^2<172$, we obtain
\begin{equation}\label{balance_position}
	\|\mu-dv_{g_{\text{rd}}}\|_{W^{-1,2}}\leq\sqrt{\frac{172}{\lambda_1(\mu)}}\mathcal{H}(\mu)^{1/2}.
\end{equation}
Finally, apply Hersch's lemma to the original measure and use the invariance
\eqref{eq:qs-invariance}. This gives a conformal automorphism $\Phi$ for which
$\Phi_*\mu$ is balanced. Since $\mathcal H(\mu)\leq\frac12$ implies
$\lambda_1(\mu)\geq\frac12$, \eqref{balance_position} yields
\eqref{eq:hierarchy-geo-intro}.

\section{The sharp Nadirashvili corner and the explicit double bubble}\label{proof of second eigenvalue}
\label{sec:corner-double-bubble}

The purpose of this section is to prove Theorem \ref{main_stability_second}.  First, we apply the
second-eigenvalue stability argument in  \cite[Proposition 5.1]{knps} to prove the quantitative 
Nadirashvili inequality \eqref{improvedNadir}.  Second, we introduce the explicit
smooth two-bubble metrics of Petrides \cite{pet25} to show the sharpness of the constant 2 in the exponential in \eqref{improvedNadir}.

\subsection{A refinement of Nadirashvili's bound}

We first establish the following estimate that gives an explicit relation between the first two eigenvalues. It will also be used for the numerical comparison below.

\begin{proposition}\label{prop:finite-scale-capacity}
Let $\mu$ be an admissible measure on $(\mathbb S^2,g_{\rm rd})$ with mass $4\pi$, and set $d=4-\lambda_2(\mu)$. If $0<d\le1/25$, then
\begin{equation}\label{eq:capacity-exp}
 \frac{1}{\lambda_1(\mu)}\ge J(d),
\end{equation}
where $J$ is defined in \eqref{lower_bounds_1/lambda_1}.
\end{proposition}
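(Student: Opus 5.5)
Our plan follows the second-eigenvalue stability argument of \cite[Proposition 5.1]{knps}, keeping every constant explicit so that the output is a concrete function $J(d)$. The guiding picture is that $d=4-\lambda_2(\mu)$ small forces $\mu$ to be close to the Nadirashvili configuration. In that configuration, after a conformal normalization, $\mu$ is near $\frac12 dv_{g_{\rm rd}}+2\pi\delta_p$, with half the mass concentrated in a tiny cap. The first eigenfunction then separates the cap from the rest. Its Rayleigh quotient is governed by the Dirichlet capacity of a cap relative to the complementary region, and that capacity is logarithmically small in the cap radius. Inverting this relation is what produces a bound of the form $1/\lambda_1\gtrsim \frac12\log(1/d)$, i.e. $\lambda_2\le 4-c\,e^{-2/\lambda_1}$.

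\textbf{Step 1: structure from $\lambda_2$ close to $4$.} We use a Nadirashvili-type test space. For every pair of disjoint caps (or complementary hemisphere-type regions, after conformal moves) we build two test functions, each obtained by transplanting a round first eigenfunction onto one of the two pieces. The min--max \eqref{def_of_eigenvalue}, together with a topological/degree argument that chooses the conformal automorphism and the splitting circle, gives
\[
d \ \ge\ c_0\,\operatorname{dist}\big(\Phi_*\mu,\ \tfrac12 dv_{g_{\rm rd}}+2\pi\delta_p\big)
\]
in a quantitative sense. Concretely, we aim for the following: there is $\Phi$ and a cap $B_r(p)$ with $\Phi_*\mu(B_r(p))\ge 2\pi(1-\kappa d)$ and $r\le C\sqrt d$ (or $r\le Cd$). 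Equally useful is a lower bound of order $\pi(1-\kappa d)$ on the mass of the outer region, with an explicit $\kappa$.

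This is exactly the content of \cite[Proposition 5.1]{knps}, and our work here is to track the constants. The hypothesis $d\le 1/25$ is what makes these explicit constants admissible, for example ensuring $r<1$ and positive masses on both sides.

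\textbf{Step 2: test function for $\lambda_1$.} Take the logarithmic cutoff $u$ on $\mathbb S^2$, using the chordal/geodesic distance $\rho$ to $p$:
\[
u=1 \text{ on } B_r(p),\qquad u=\frac{\log(R/\rho)}{\log(R/r)}\ \text{on } B_R\setminus B_r,\qquad u=0 \text{ outside } B_R,
\]
with $R$ a fixed radius chosen so that the outer region still carries mass about $2\pi$. The energy satisfies $\int|du|^2\le 2\pi/\log(R/r)$, up to an explicit curvature correction. Subtract the $\mu$-mean of $u$ and use the masses from Step 1. This gives
\[
\frac1{\lambda_1}\ \ge\ \frac{\int u^2d\mu-(\int u\,d\mu)^2/4\pi}{\int|du|^2}\ \ge\ \frac{m_{\rm in}m_{\rm out}}{4\pi}\cdot\frac{\log(R/r)}{2\pi}.
\]
With $m_{\rm in},m_{\rm out}\approx 2\pi$ this is $\approx \frac12\log(R/r)$. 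Inserting $r\lesssim$ a power of $d$ then defines $J(d)$ explicitly, e.g. $J(d)=\frac{(1-\kappa d)^2}{2}\log\frac{R}{C d^{\beta}}$ up to the corrections. We will choose $R$ and the exponent $\beta$ so that the leading term is $\frac12\log(1/d)$; that choice is what yields the sharp constant $2$ in Theorem~\ref{main_stability_second}.

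\textbf{Main obstacle.} The hard part is Step 1 with explicit constants. We must convert "$\lambda_2$ near $4$" into a concentration radius $r$ that is polynomial in $d$ with the right exponent, and do so uniformly over all admissible measures, including non-smooth ones. The plan is to follow the knps argument: first use the balancing lemma (Lemma~\ref{lem:hersch-normalization}) applied to suitable half-measures. Then bound the deficit of each half-sphere problem by the first-eigenvalue deficit of a round hemisphere-type comparison, which by Theorem~\ref{thm:sharp-reciprocal-hierarchy}/\eqref{balance_position}-type stability controls how far each half is from a conformal image of the round measure. A conformal image with large dilation parameter is itself concentrated in a cap of radius comparable to $e^{-\text{dilation}}$. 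It is this correspondence between dilation and cap radius that must be quantified linearly in $d$.
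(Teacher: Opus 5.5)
Your Step~2 cannot produce the leading term $\frac12\log\frac1d$, and the Step~1 statement it would need is false.

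A one-sided cutoff gives at best $\frac{1}{\lambda_1}\gtrsim\frac12\log\frac{R}{r}$, attained when $m_{\rm in}=m_{\rm out}=2\pi$. So you would need essentially all of the concentrated mass $2\pi$ inside a cap of radius $O(d)$. That does not happen. Near the Nadirashvili configuration the concentrated half is, to leading order, the reflected measure $\tau^*\bigl(\tfrac12 dv_{g_{\rm rd}}\bigr)$ through a cap $Z=Z_r(p)$ with $\tan\frac{r}{2}=\rho\sim\sqrt d$. In stereographic coordinates with $p=0$ and $Z=\{|z|<\rho\}$, the disc $\{|z|<s\rho^2\}$ carries only $2\pi s^2/(1+s^2)$ of it. Petrides' metrics, after a dilation at one pole, are exactly of this form with $\rho=\varepsilon$ and $d\approx 12\varepsilon^2$.

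Consequences for your argument:
\begin{itemize}
\item A cap of radius $O(d)$ holds only a fixed fraction of $2\pi$. Mass $2\pi(1-\kappa d)$ requires radius $\sim\sqrt d$, which is what \cite[Proposition 5.1]{knps} actually gives through $\operatorname{Area}(Z)\le\pi d$.
\item With $r\le C\sqrt d$ your bound is only $\frac{1}{\lambda_1}\gtrsim\frac14\log\frac1d$, i.e.\ $\lambda_2\le 4-c\,e^{-4/\lambda_1}$. That is off by a factor $2$ in the exponent.
\item A fixed $R$ costs a further constant factor, since $\mathbb S^2\setminus B_R(p)$ carries only about $\pi(1+\cos R)<2\pi$.
\item The two-mass bound $\operatorname{Var}\ge m_{\rm in}m_{\rm out}/4\pi$ discards the mass inside the transition annulus. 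No choice of $r,R$ recovers the missing modulus with only an $O(1)$ loss.
\item Theorem~\ref{thm:sharp-reciprocal-hierarchy} is not usable for your Step~1 either. It requires $\mathcal H(\mu)\le\frac12$, whereas here $\mathcal H(\mu)\ge\frac{1}{\lambda_1}-\frac32$ is large.
\end{itemize}

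The missing idea is to use the reflection itself, not a mass-in-a-cap consequence of it. The paper takes from \cite[Proposition 5.1]{knps} the cap $Z$, the reflection $\tau$, the bound $\operatorname{Area}(Z)\le\pi d$, and the estimate $\|\nu_Z-\lambda_2\Phi_*\mu\|_{((C^0)\cap W^{1,2})^*}\le 4\pi\sqrt{3d}$ for the folded measure $\nu_Z=|dR_Z|^2dv_{g_{\rm rd}}$.

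It then tests $\lambda_1$ with the odd function $f=h$ on $\mathbb S^2\setminus Z$ and $f=-h\circ\tau$ on $Z$. Here $h$ is linear in $\log\tan\frac{\theta}{2}$, vanishes on $\partial Z$, and equals $1$ beyond the equator. The transition region of $f$ runs from $\tau(\{\theta=\frac{\pi}{2}\})$, of radius $\sim r^2\lesssim d$, out to the equator. So its modulus is doubled, and $\int|df|^2=-4\pi/\log\tan\frac{r}{2}$.

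Meanwhile two identities hold exactly:
\begin{itemize}
\item $\int f\,d\nu_Z=0$;
\item $\int f^2d\nu_Z=4\int_{\mathbb S^2\setminus Z}h^2\ge 4\bigl(4\pi-\operatorname{Area}(Z)+4\pi\log 2/\log\tan\frac{r}{2}\bigr)$.
\end{itemize}
This accounts for the mass lying in the transition region. Testing the dual estimate against $f$ and $f^2$ transfers these identities to $\operatorname{Var}_{\Phi_*\mu}(f)$. Combined with $-\log\tan\frac{r}{2}\ge\frac12\log\frac{4-d}{d}$, this gives $\frac{1}{\lambda_1}\ge J(d)$. The leading term is the sharp $\frac12\log\frac{4-d}{d}$, and the only losses are the $O(1)$ term $\log 2$ and the $R(d)$ error.
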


\begin{proof}
We follow the proof of \cite[Proposition 5.1]{knps}. After a conformal automorphism $\Phi$, there is a cap $Z=Z_r(p)$  and a conformal reflection $\tau=\tau_{\partial Z}:Z\to\mathbb{S}^2\setminus Z$ such that, for the folded
map
\begin{equation*}
	R_Z(x)=
		\begin{cases}
		x,& x\in\mathbb S^2\setminus Z,\\
		\tau(x),&x\in Z,
		\end{cases}
\end{equation*}
one has
\begin{align}
	 \operatorname{Area}(Z,g_{\rm rd}) &\leq\pi(4-\lambda_2(\mu))\label{eq:KNPS-explicit-residual}\\ 
	\|d\nu_Z-\lambda_2(\mu)\Phi_*\mu\|_{((C^0)\cap W^{1,2})^*}&\leq4\pi\sqrt{3(4-\lambda_2(\mu))},\label{eq:KNPS-explicit-residual-2}
\end{align}
where $d\nu_Z=|dR_Z|_{g_{\rm rd}}^2\,dv_{g_{\rm rd}}$ and $((C^0)\cap W^{1,2})^*$ stands for the dual norm of
\[\|u\|_{(C^0)\cap W^{1,2}}^2=\|u\|^2_{C^0}+\|du\|^2_{L^2}.\]
 
Under the hypothesis $d=4-\lambda_2(\mu)\le1/25$, \eqref{eq:KNPS-explicit-residual} implies $r<\pi/2$.  Let $\theta=d(p,\cdot)$ be the distance parameter based on $p$, and define radial function $h$ on $\mathbb{S}^2\setminus Z$ as
\begin{equation*}
	h(x)=
		\begin{cases}
			\displaystyle
			\frac{\log\tan\frac{r}{2}-\log\tan\frac{\theta}{2}}{\log\tan\frac{r}{2}},
			&r\leq\theta\leq\frac{\pi}{2},\\
			1,&\frac{\pi}{2}\leq\theta\leq\pi.
		\end{cases}
\end{equation*}
 We also put
\begin{equation*}%\label{eq:odd-condenser}
	f(x)=
		\begin{cases}
			h(x),&x\in\mathbb{S}^2\setminus Z,\\
			-h(\tau(x)),&x\in Z.
		\end{cases}
\end{equation*}
Then $f=0$ on $\partial Z$, and conformal invariance of the Dirichlet integral together with the change of variables under $\tau$ gives the exact identities
\begin{equation*}%\label{eq:integral f}
	\int_{\mathbb{S}^2}fd\nu_Z=0,\quad
	\int_{\mathbb{S}^2}|df|_{g_{\rm rd}}^2\,dv_{g_{\rm rd}}=-\frac{4\pi}{\log\tan\frac{r}{2}},\quad
	\int_{\mathbb{S}^2}f^2d\nu_Z=4\int_{\mathbb{S}^2\setminus Z}{h^2\,dv_{g_{\rm rd}}}.
\end{equation*}
As a consequence, 
\begin{equation}\label{Xnorm-f}
 	\|f\|_{(C^0)\cap W^{1,2}}\leq\left(1-\frac{4\pi}{\log\tan\frac{r}{2}}\right)^{1/2},
 	\qquad
 	\|f^2\|_{(C^0)\cap W^{1,2}}\leq\left(1-\frac{16\pi}{\log\tan\frac{r}{2}}\right)^{1/2}.
\end{equation}
Using \eqref{eq:KNPS-explicit-residual-2} and \eqref{Xnorm-f}, the variance of $f$ with respect to $\tilde\mu=\Phi_*\mu$ satisfies
\begin{equation*}%\label{eq:variance-explicit}
	\begin{split}
		\text{Var}_{\tilde\mu}(f):=&\ \int_{\mathbb{S}^2} f^2d\widetilde\mu
		-\frac1{4\pi}\left(\int f\,d\widetilde\mu\right)^2\\
		\geq&\ \frac{1}{\lambda_2(\mu)}\left(4\int_{\mathbb{S}^2\setminus Z}h^2\,dv_{g_{\rm rd}}-4\pi\sqrt{3(4-\lambda_2(\mu))}\left(1-\frac{16\pi}{\log\tan\frac{r}{2}}\right)^{1/2}\right)\\
		&\ -\frac{16\pi(3(4-\lambda_2(\mu)))}{4\lambda_2^2(\mu)}\left(1-\frac{4\pi}{\log\tan\frac{r}{2}}\right).\\
	\end{split}
\end{equation*} 

To estimate $\int_{\mathbb{S}^2\setminus Z}h^2\,dv_{g_{\rm rd}}$, we use the fact $\int_{0 \le \theta \le \frac{\pi}2} (-\log \tan \frac{\theta}2) dv_{\rm rd} = 2\pi \log 2$ and $h^2\geq2h-1$, 
\begin{equation*}%\label{eq:h2-lower}
	\int_{\mathbb{S}^2\setminus Z} h^2\,dv_{g_{\rm rd}}\geq
	\int_{\mathbb{S}^2\setminus Z} (2h-1)\,dv_{g_{\rm rd}}\geq4\pi- \operatorname{Area}(Z,g_{\rm rd}) +\frac{4\pi\log2}{\log\tan\frac{r}{2}}.
\end{equation*}
Since $-\log\tan\frac{r}{2}\geq\frac{1}{2}\log\frac{4-d}{d}$, we obtain
\begin{equation}\label{eq:RdDef}
	\begin{split}
		\frac{1}{4\pi}\text{Var}_{\tilde\mu}(f)&\geq
		1-\frac{d}{4}-\frac{2\log 2}{\log\frac{4-d}{d}}-\frac{\sqrt{3d}}{4\!-\!d}\left(1+\frac{32\pi}{\log\frac{4-d}{d}}\right)^{1/2}\!-\!\frac{3d}{(4\!-\!d)^2}\left(1\!+\!\frac{8\pi}{\log\frac{4-d}{d}}\right)\\
		&=:1-\frac{2\log 2}{\log\frac{4-d}{d}}-R(d).
	\end{split}
\end{equation}
\iffalse
It remains to make the universal coefficient explicit.  For $0<d\leq 1/25$ one has $\log\frac{4-d}{d}\geq\log99$ and consequently, 
\[
{\color{red}-\frac{d}{4}-\frac{\sqrt{3d}}{4-d}\left(1+\frac{32}{\log\frac{4-d}{d}}\right)^{1/2}-\frac{3d}{(4-d)^2}\left(1+\frac{8}{\log\frac{4-d}{d}}\right)
\ge -2.1\sqrt{d}-1.49d.}
\]
\fi
By variational principle, we have
\begin{equation}\label{lower_bounds_1/lambda_1}
	\begin{split}
		\frac{1}{\lambda_1(\mu)}\geq\frac{\text{Var}_{\tilde{\mu}}(f)}{\|df\|^2_{L^2}}&\geq\frac{1}{2}\log\frac{4-d}{d}\left(1-\frac{2\log 2}{\log\frac{4-d}{d}}-R(d)\right)\\
		&\geq\frac{1}{2}\log\frac{4-d}{d}-\log 2-\frac{1}{2}R(d)\log\frac{4-d}{d}  =:J(d).\\
	\end{split}
\end{equation}
This proves the proposition. 
\end{proof}

Given any $\lambda_1 \in (0, 2]$, one may solve \eqref{eq:capacity-exp} via {MATLAB} to get an upper bound of $\lambda_2$. Here are the numerical results which were used to plot the green curve near the horizontal dashed line $\lambda_2=4$ in Figure \ref{fig:joint-region-intro}.

\begin{table}[htbp]
	\begin{equation*}
    \begin{array}{cc|cc|cc}
    \lambda_1&\lambda_2\leq&\lambda_1&\lambda_2\leq&\lambda_1&\lambda_2\leq\\ \hline
		0.15&3.999998&0.80&3.982117&1.45&3.966751\\
		0.20&3.999959&0.85&3.980507&1.50&3.965955\\
		0.25&3.999734&0.90&3.978980&1.55&3.965197\\
		0.30&3.999141&0.95&3.977535&1.60&3.964475\\
		0.35&3.998111&1.00&3.976169&1.65&3.963787\\
		0.40&3.996702&1.05&3.974878&1.70&3.963130\\
		0.45&3.995023&1.10&3.973657&1.75&3.962503\\
		0.50&3.993180&1.15&3.972503&1.80&3.961903\\
		0.55&3.991262&1.20&3.971411&1.85&3.961330\\
		0.60&3.989331&1.25&3.970377&1.90&3.960780\\
		0.65&3.987430&1.30&3.969398&1.95&3.960253\\
		0.70&3.985584&1.35&3.968469&\\
		0.75&3.983811&1.40&3.967588&&\\\hline
    \end{array}
\end{equation*}
\caption{Numerical results from Proposition \ref{prop:finite-scale-capacity}.}
\label{Numerical result of improved nad}
\end{table}

In particular, when $\lambda_1=0.85$, Proposition~\ref{prop:finite-scale-capacity} gives numerically $\lambda_2\leq3.980507$. This is sharper than the estimate from \eqref{improvedNadir}, which gives $\lambda_2\leq3.989661$.

\begin{proof}[Proof of the inequality \eqref{improvedNadir}]
	For $0\leq d\leq\frac{1}{25}$, we have $\log\frac{4-d}{d}\geq\log 99$, then
	\begin{equation*}
		\left(1+\frac{32\pi}{\log\frac{4-d}{d}}\right)^{1/2}<4.79,\quad\left(1+\frac{8\pi}{\log\frac{4-d}{d}}\right)<6.47,\quad 4-d\geq 3.96.
	\end{equation*}
	Hence, the function $R(d)$ defined in \eqref{eq:RdDef} satisfies $R(d)\leq2.1 \sqrt{d}+1.49d$. Consequently
\iffalse	
{\color{blue}Set
\[
 Q(d)=(2.10\sqrt d+1.49d)\log\frac{4-d}{d}.
\]
For $0<d\le1/25$, using $\log((4-d)/d)\ge\log99$ and $4-d\ge3.96$ gives
\begin{equation*}
 Q'(d)\ge
 \frac{1}{\sqrt d}\left(1.05\log99-\frac{8.4}{3.96}\right)
 +\left(1.49\log99-\frac{5.96}{3.96}\right)>0.
\end{equation*}
Thus}
\fi
	\begin{equation*}
		R(d)\log\frac{4-d}{d}\leq\left(2.1\sqrt{\frac{1}{25}}+1.49\cdot\frac{1}{25}\right)\log\frac{4-\frac{1}{25}}{\frac{1}{25}}
        =\frac{1199}{2500}\log99<\frac{1199}{2500}\log100.
	\end{equation*}
	As a result
	\begin{equation*}
		d\exp\left\{\frac{2}{\lambda_1(\mu)}\right\}\geq\frac{4-d}{4}\left(\frac{4-d}{d}\right)^{-R(d)}\geq\frac{99}{100}100^{-\frac{1199}{2500}}>\frac{e}{25}.
	\end{equation*}
	If $d\geq\frac{1}{25}$, Hersch's inequality gives $\lambda_1\leq 2$, and therefore
	\begin{equation*}
		d\exp\left\{\frac{2}{\lambda_1(\mu)}\right\}\geq\frac{e}{25}.
	\end{equation*}
Equality in this last estimate would require simultaneously $d=1/25$ and $\lambda_1=2$, which is impossible. Hence the inequality in \eqref{improvedNadir} is strict. 
	This proves the desired inequality.
\end{proof}

\subsection{Petrides' explicit double-bubble metric}\label{subsec:petrides-double-bubble}

Petrides \cite{pet25} introduced the following family as a test family for the gap condition in the maximization of $\bar\lambda_1+t\bar\lambda_2$. His metric has area $8\pi$; we divide it by $2$ so that our normalization is $4\pi$. Set 
\begin{equation*}
	\beta_\varepsilon=\frac{1+\varepsilon^2}{1-\varepsilon^2} \;
	(0<\varepsilon<1), \quad \text{and}\quad \rho_\varepsilon^\pm(x_3)
	=\frac{\beta_\varepsilon^2-1}
	{(\beta_\varepsilon\pm x_3)^2}.
\end{equation*}
Let $\rho_\varepsilon
=\frac12\bigl(\rho_\varepsilon^+
+\rho_\varepsilon^-\bigr)$ and define 
\begin{equation*}
	g_\varepsilon=\rho_\varepsilon(x_3) g_{\rm rd}.
\end{equation*}
Each summand is the conformal factor of a round metric transported by a
M\"obius transformation.  Consequently
\begin{equation}\label{fixed_area}
 	\int_{\mathbb{S}^2}{ \rho_\varepsilon\,dv_{g_{\rm rd}}}=4\pi.
\end{equation}
As $\varepsilon\to 0$, the two terms concentrate at opposite poles, representing the two-bubble limit. When $\varepsilon\to 1$, $g_{\varepsilon}$ converges to the round metric.

According to \cite[{Theorem 0.4}]{pet25}, after the area
renormalization above,  
\begin{equation*}
	\lambda_1(g_\varepsilon)=\frac1{\log(1/\varepsilon)}+O\!\left(\frac1{\log^2(1/\varepsilon)}\right),
	\qquad
	\lambda_2(g_\varepsilon)=4-12\varepsilon^2+o(\varepsilon^2).
\end{equation*}
Therefore
\begin{equation}\label{eq:petrides-eliminate-eps}
	4-\lambda_2(g_\varepsilon)
	=\exp\!\left(-\frac{2+o(1)}{\lambda_1(g_\varepsilon)}\right).
\end{equation}
This completes the proof of Theorem \ref{main_stability_second}.

\iffalse
At the opposite endpoint one gets a useful perturbative check.  Let
$P_2(z)=(3z^2-1)/2$.  Expanding \eqref{eq:petrides-density-alpha} gives
\begin{equation}\label{eq:petrides-round-density-exp}
 \rho_\alpha=1+2\alpha^2P_2(z)+O(\alpha^4).
\end{equation}
First-order perturbation theory in the three-dimensional round first
eigenspace then yields
\begin{equation}\label{eq:petrides-round-spectrum-exp}
 \lambda_1(g_\alpha)
 =2-\frac85\alpha^2+O(\alpha^4),
 \qquad
 \lambda_2(g_\alpha)
 =2+\frac45\alpha^2+O(\alpha^4).
\end{equation}
Thus the double-bubble curve is tangent at $(2,2)$ to the line of slope
$-1/2$, exactly the tangent of Hersch's reciprocal-sum boundary.
\fi

\subsection{Monotonicity and numerical result}
Because $\rho_{\varepsilon}$ is rotationally symmetric, write
$u(x_3,\theta)=v(x_3)e^{im\theta}$. The eigenvalue equation for $g_\varepsilon$ is
\begin{equation}\label{reduced_ode}
 	-\frac d{dx_3}((1-x_3^2)v'(x_3))+\frac{m^2}{1-x_3^2}v(x_3)
 	=\lambda\rho_{\varepsilon}(x_3)v(x_3),\qquad -1<x_3<1,
\end{equation}
with the usual regularity condition at $x_3=\pm1$.  Hence an expansion in associated Legendre functions produces a symmetric generalized matrix eigenvalue problem.  The following values use the Legendre-Galerkin method, taking associated Legendre functions as base functions; they are included for orientation and for Figure~\ref{fig:joint-region-intro}.

\begin{table}[htbp]
	\centering 
	\begin{tabular}{c|cc@{\qquad}c|cc}
		$\varepsilon$&$\lambda_1$&$\lambda_2$&$\varepsilon$&$\lambda_1$&$\lambda_2$\\
		\hline
		$0.80$&$1.923696$&$2.039744$&$0.35$&$1.113667$&$2.801558$\\
		$0.70$&$1.817110$&$2.101145$&$0.30$&$0.992418$&$3.004651$\\
		$0.60$&$1.661246$&$2.205839$&$0.20$&$0.751997$&$3.475650$\\
		$0.50$&$1.462301$&$2.372995$&$0.15$&$0.633517$&$3.698263$\\
		$0.45$&$1.350685$&$2.488248$&$0.10$&$0.513863$&$3.868945$\\
		$0.40$&$1.233750$&$2.629891$&$0.05$&$0.384602$&$3.968856$\\ \hline 
	\end{tabular} 
	\vspace{.1cm} \caption{Eigenvalues in Petrides' double-bubble metric.}
	\label{Numerical result Petrides eigenvalues}
\end{table}

In fact, we can prove 

\begin{proposition}\label{monotonicity_of_petrides}
	For every $0<\varepsilon<1$, the first positive eigenvalue
	$\lambda_1(g_\varepsilon)$ is simple. % A corresponding eigenfunction 	can be chosen rotationally symmetric, odd with respect to the equator 	$\{x_3=0\}$, and strictly increasing as a function of 	$r=x_3\in(0,1)$. 
	Moreover, the function $\varepsilon\mapsto\lambda_1(g_\varepsilon)$ is strictly increasing on $(0,1)$.
\end{proposition}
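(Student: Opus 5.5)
Separation of variables is the plan. Since $\rho_\varepsilon$ depends only on $x_3$ and is even in $x_3$, the spectrum of $g_\varepsilon$ is the union over $m\ge0$ of the spectra of the Sturm--Liouville problems \eqref{reduced_ode}; for $m\ge1$ each eigenvalue has multiplicity two on the sphere (from $e^{\pm im\theta}$), while for $m=0$ the eigenvalues are simple (Sturm--Liouville with regular singular endpoints). The evenness of $\rho_\varepsilon$ further splits the $m=0$ problem into even and odd parts. I will show that $\lambda_1(g_\varepsilon)$ is the lowest eigenvalue of the $m=0$ odd problem, denoted $\mu_0^{\rm odd}(\varepsilon)$, and that it lies strictly below the lowest $m=1$ eigenvalue $\mu_1(\varepsilon)$. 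Since the $m=0$ even problem has first nonzero eigenfunction with two sign changes (by Sturm oscillation it is the third $m=0$ eigenvalue), this gives simplicity.

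\textbf{Step 1: comparing $m=0$ odd with $m=1$.} Let $w$ be the first $m=1$ eigenfunction. Then $w\ge0$ on $(-1,1)$, $w$ is even by symmetry and uniqueness, and $w(x_3)\cos\theta$ is an eigenfunction on $\mathbb S^2$. Use the trial function $v(x_3)=\operatorname{sgn}(x_3)\cdot$ (suitable construction), or more cleanly, use a Hersch-type/nodal domain argument. The $m=1$ eigenfunction $w\cos\theta$ has nodal set a great circle through the poles. Its half $\{\cos\theta>0\}$ is a nodal domain on which $\mu_1$ is the Dirichlet eigenvalue. The odd $m=0$ eigenfunction has nodal set the equator, and $\mu_0^{\rm odd}$ is the first Dirichlet eigenvalue of the hemisphere $\{x_3>0\}$ with weight $\rho_\varepsilon$. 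So I need
\[
\lambda^D(\{x_3>0\},\rho_\varepsilon)<\lambda^D(\{x_1>0\},\rho_\varepsilon).
\]
I plan to prove this by transplanting the Dirichlet eigenfunction of $\{x_1>0\}$ under the rotation exchanging $x_1$ and $x_3$. The Dirichlet energy is unchanged, so I must show that the mass increases: $\int \phi^2\rho_\varepsilon(x_1)\,dv<\int\phi^2\rho_\varepsilon(x_3)\,dv$ fails in general, so a direct rearrangement is needed. As an alternative I will use the test function $u=\psi(x_3)$ with $\psi$ odd, increasing, and chosen as $\operatorname{sgn}$-like capacity profiles from Section~\ref{proof of second eigenvalue}. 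This is the step I expect to be the main obstacle; what I will try first is a first-variation/continuity argument. At $\varepsilon\to1$ (round limit, perturbation as in \S\ref{subsec:qs-sharpness} with density $1+c(1-\varepsilon)P_2(x_3)$) the splitting matrix makes the $x_3$ mode strictly lowest. If $\mu_0^{\rm odd}(\varepsilon)=\mu_1(\varepsilon)$ at some $\varepsilon$, I aim to derive a contradiction by showing that at such a crossing $\mu_1$ would have a multiplicity-three eigenspace invariant under $O(2)\times\mathbb Z_2$ and spanned by the $x_3$-odd function $v(x_3)$ together with $w(x_3)e^{\pm i\theta}$. I would then exploit the Hersch balancing and the matrix $B$ of \eqref{eq:qs-Bmatrix}: $B$ is diagonal with $B_{33}>B_{11}=B_{22}$, since $\int x_3^2\rho_\varepsilon>\int x_1^2\rho_\varepsilon$ because $\rho_\varepsilon$ concentrates at the poles.

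\textbf{Step 2: monotonicity.} Given simplicity, $\lambda_1(\varepsilon)$ is real-analytic in $\varepsilon$, with derivative
\[
\lambda_1'(\varepsilon)=-\lambda_1\int v^2\,\partial_\varepsilon\rho_\varepsilon\,dv_{g_{\rm rd}}\Big/\int v^2\rho_\varepsilon\,dv_{g_{\rm rd}}.
\]
Since $v$ is odd and increasing in $x_3$ (Sturm: the first odd eigenfunction has no zeros on $(0,1)$, and $(1-x_3^2)v'$ is monotone by the equation), $v^2$ is an even function increasing in $|x_3|$. I compute $\partial_\varepsilon\rho_\varepsilon$, noting that $\beta$ decreases in $\varepsilon$ and that total mass is fixed by \eqref{fixed_area}, so $\int\partial_\varepsilon\rho_\varepsilon=0$. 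I expect $\partial_\varepsilon\rho_\varepsilon$ to change sign exactly once in $|x_3|$, being positive near the equator and negative near the poles, because increasing $\varepsilon$ spreads mass from the poles. A Chebyshev-type single-crossing lemma then gives $\int v^2\partial_\varepsilon\rho_\varepsilon<0$, hence $\lambda_1'>0$. Verifying the single sign change of $\partial_\beta\bigl[(\beta^2-1)((\beta+z)^{-2}+(\beta-z)^{-2})\bigr]$ in $z^2$ is a routine rational computation.
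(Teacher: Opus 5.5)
\textbf{Step 1 has a genuine gap; Step 2 is essentially the paper's argument.} Step 2 uses the first variation, a single sign change of the density derivative as a function of $x_3^2$, and the fact that $v^2$ increases in $|x_3|$, exactly as the paper does. There is one harmless slip: $\beta_\varepsilon=(1+\varepsilon^2)/(1-\varepsilon^2)$ \emph{increases} with $\varepsilon$. Your qualitative description (increasing $\varepsilon$ moves mass away from the poles) is still correct.

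To get simplicity you need a \emph{lower} bound on the lowest $m\ge1$ eigenvalue $\mu_1(\varepsilon)$ that beats an upper bound on $\mu_0^{\rm odd}(\varepsilon)$. None of your routes supplies one. Trial functions $\psi(x_3)$ only bound $\mu_0^{\rm odd}$ from above, and you note yourself that the rotation transplant fails. The crossing argument also does not close. At a hypothetical crossing the common value $\lambda$ satisfies $\lambda\le 1/B_{33}<2$. A triple eigenvalue $\lambda_1=\lambda_2=\lambda_3=\lambda<2$ is consistent with Hersch's inequality, with $B_{ii}\le 1/\lambda_i$, and with $B_{33}>B_{11}=B_{22}$, so nothing in the matrix information rules it out. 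As it stands, neither simplicity nor the differentiability of $\lambda_1(\varepsilon)$ used in Step 2 is established.

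The missing idea is the structure $\rho_\varepsilon=\frac12(\rho_\varepsilon^++\rho_\varepsilon^-)$.
\begin{itemize}
\item Each $\rho_\varepsilon^\pm g_{\rm rd}$ is the pull-back of the round metric by a M\"obius map, so $\lambda_1(\rho_\varepsilon^\pm g_{\rm rd})=2$.
\item A mode $u=v(x_3)e^{im\theta}$ with $m\ge1$ has zero angular average. Since $\rho_\varepsilon^\pm$ is rotationally symmetric, $\int u\,\rho_\varepsilon^\pm\,dv_{g_{\rm rd}}=0$ for both signs, hence $\int|du|^2\,dv_{g_{\rm rd}}\ge 2\int u^2\rho_\varepsilon^\pm\,dv_{g_{\rm rd}}$.
\item Averaging the two inequalities shows every $m\ge1$ mode has Rayleigh quotient at least $2$, for all $m\ge1$ at once.
\item You already have the other half: $\rho_\varepsilon$ is increasing in $x_3^2$, so Chebyshev gives $\int x_3^2\rho_\varepsilon\,dv_{g_{\rm rd}}>4\pi/3$. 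Since $x_3$ has $\rho_\varepsilon$-mean zero, this gives $\lambda_1(g_\varepsilon)<2$.
\end{itemize}
Hence $\lambda_1$ is attained only in the $m=0$ sector. There, Sturm--Liouville simplicity and oscillation give the simple, odd, monotone eigenfunction with one zero that Step 2 needs. This is how the paper proceeds.
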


\begin{proof}
  	We first show the first eigenvalue is solved by taking $m=0$ in \eqref{reduced_ode}. Equivalently, it is sufficient to prove that the first eigenfunctions are rotational symmetric.  Suppose the contrary that $m\geq 1$, then $u$ have zero angular
	average. Since $\rho_\varepsilon^\pm$ depend only on
	$r=x^3$, one has
	\[
	\int_{\mathbb{S}^2}u\rho_\varepsilon^{\pm}dv_{g_{\rm rd}}=0.
	\]
	Applying the variational principle to metrics $\rho_\varepsilon^{\pm}g_{\rm rd}$ and using the fact that $\lambda_1(\rho_\varepsilon^{\pm}g_{\rm rd})=\lambda_1(g_{\rm rd})=2$,
	\[
	\int_{\mathbb{S}^2}|du|_{g_{\rm rd}}^2dv_{g_{\rm rd}}
	\ge
	2\int_{\mathbb{S}^2}u^2\rho_\varepsilon^\pm dv_{\rm rd}.
	\]
	Adding the two inequalities yields
	\begin{equation*}
		\int_{\mathbb{S}^2}|du|_{g_{\rm rd}}^2dv_{g_{\rm rd}}
	\ge
	2\int_{\mathbb{S}^2}u^2\rho_\varepsilon dv_{g_{\rm rd}}.
	\end{equation*}
	This implies $\lambda_1(g_{\varepsilon})\geq 2$. On the other hand, if we put $t=\beta_\varepsilon^{-2}\in(0,1)$, then 
	\[
	\omega_t(r):=\rho_\varepsilon(r)
	=(1-t)\frac{1+tr^2}{(1-tr^2)^2}
	\]
    has the same monotonicity with the function $r^2$, since 
	\[
	\frac{\partial\omega_t}{\partial(r^2)}
	=(1-t)t\,\frac{3+tr^2}{(1-tr^2)^3}>0.
	\]
	Thus the strict Chebyshev inequality gives
	\[
	\int_{\mathbb{S}^2}r^2\omega_tdv_{\rm rd}
	> \frac 1{4\pi} \int_{\mathbb{S}^2}r^2 dv_{\rm rd}\int_{\mathbb{S}^2}\omega_tdv_{\rm rd} =
	\frac 1{4\pi} \cdot \frac{8\pi}{3}\cdot 4\pi
	=\frac{4\pi}{3}.
	\]
	Together with the fact that the radial function $r=x_3$ has
	$\rho_\varepsilon$-mean zero, we obtain
	\[
	\lambda_1(g_\varepsilon)
	\leq
	\frac{\displaystyle \int_{\mathbb{S}^2} |dr|^2dv_{\rm rd}}
	{\displaystyle \int_{\mathbb{S}^2} r^2\omega_tdv_{\rm rd}}
	<2.
	\]
	This contradiction shows that the first eigenfunction is rotationally symmetric.
	
	Consequently, the first eigenvalue is the first positive eigenvalue of the
	Sturm-Liouville problem
	\begin{equation}\label{reduced_ode_1}
		-((1-r^2)v'(r))'=\lambda_1(t)\omega_t(r)v(r),\qquad -1<r<1,
	\end{equation}
	and this eigenvalue is simple. Moreover, by $\mathbb{Z}_2$-symmetric trick, we know the eigenfunction $u$ of $\lambda_1$ is even or odd along $r$-direction. At the meantime, Courant's nodal domain theorem gives that $u$ has exactly two nodal domain on $\mathbb{S}^2$. So $v$ has only one zero at $r=0$, and it must be odd because the integral of $u$ over $dv_{g_{\varepsilon}}$ is $0$. Now we can suppose $v(r)>0$ for $0<r<1$. From \eqref{reduced_ode_1} we know $(1-r^2)v'(r)$ is strictly decreasing on $(0,1)$. It follows $(1-r^2)v'(r)>0$ and thus $v'(r)>0$ for $0<r<1$. 
	
	To prove the monotonicity of $\varepsilon\mapsto\lambda_1(g_{\varepsilon})$, we set $\lambda_1(t)=\lambda_1(g_{\varepsilon})$. It remains to prove $\lambda_1'(t)<0$. We  normalize $v=v_t$ such that $\|v\|_{L^2(\omega_tg_{\rm rd})}=1$.
	Use the first variation formula for Laplace eigenvalues, 
	\begin{equation}\label{first_variation}
		\lambda_1'(t)=-\lambda_1(t)
	\int_{\mathbb{S}^2}(\partial_t\omega_t)v_t^2dv_{g_{\rm rd}}.
	\end{equation}
	where by definition
	\[
	\partial_t\omega_t(r)
	=
	\frac{tr^4+3(1-t)r^2-1}{(1-tr^2)^3}.
	\tag{5.16}
	\]
	The numerator, regarded as a function of $r^2$, is strictly
	increasing, is negative at $r=0$, and equals $2(1-t)>0$ at $r=1$.
	Thus there is a unique $r_t\in(0,1)$ such that
	$\partial_t\omega_t(r_t)=0$. Since the total area is independent of $t$, we know $\int_{\mathbb{S}^2}\partial_t\omega_tdv_{g_{\rm rd}}=0$ and thus
	\[
	\int_{\mathbb{S}^2}\partial_t\omega_tv_t^2dv_{g_{\rm rd}}
	=
	\int_{\mathbb{S}^2}
	\partial_t\omega_t
	(v_t^2-v_t(r_t)^2)dv_{g_{\rm rd}}.
	\]
	By $v_t(r),v_t'(r)>0$ on $(0,1)$, we know $v_t^2$ is strictly increasing as a function of $|r|$. For $|r|<r_t$ both factors in the last integrand are negative, whereas for $|r|>r_t$ they are both positive. Hence
	\[
	\int_{\mathbb{S}^2}\partial_t\omega_t\,v_t^2\,dv_{g_{\rm rd}}>0.
	\]
	Equation \eqref{first_variation} therefore gives $\lambda_1'(t)<0$. This completes the proof.
\end{proof} 

\section{The diagonal family and double-bubble filling}
\label{sec:joint-range}

In this section we construct explicit families of admissible measures so that the joint first-two-eigenvalue fills  the region $\Omega$ in Figure \ref{fig:joint-region-intro}. 

\subsection{The diagonal lies in the spectral range}

We first construct admissible measures whose joint first-two-eigenvalue fills  the diagonal. 
For a center $p\in\mathbb S^2$ we use the logarithmic cutoff
\begin{equation}\label{cutoffu}
 u^{p}_{\epsilon,r}(x)=
 \begin{cases}
 1,&0\le d(x,p)\le\epsilon,\\
 \displaystyle\frac{\log(d(x,p)/r)}{\log(\epsilon/r)},&\epsilon<d(x,p)\le r,\\
 0,&d(x,p)>r,
 \end{cases}
\end{equation}
where $0<\epsilon<r$ and $d$ is the round distance.
\begin{proposition}\label{measures of multiplicity 2}
For every $\lambda\in(0,2]$ there exists a continuous conformal semimetric
\[
 h=\rho\,g_{\rm rd}, \qquad \rho\in C^0(\mathbb{S}^2),\quad \rho\ge0,
\]
whose zero set consists of at most two points, such that the area measure
${dv_h=\rho\,dv_{g_{\rm rd}}}$ is admissible, and 
\begin{equation*}
 dv_h(\mathbb{S}^2)=4\pi,
 \qquad
 \lambda_1(dv_h)=\lambda_2(dv_h)=\lambda.
\end{equation*}
\end{proposition}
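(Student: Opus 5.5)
We will construct a one-parameter family of $S_3$-invariant (indeed $D_3$-invariant) semimetrics and combine a symmetry argument with an intermediate value argument.

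\textbf{Symmetry setup.} Fix three points $p_1,p_2,p_3$ equally spaced on the equator $\{x_3=0\}$. Let $G\cong S_3$ (in fact $D_3$) be the group of isometries of $g_{\rm rd}$ generated by the rotation of angle $2\pi/3$ about the $x_3$-axis and the reflection fixing $p_1$ and swapping $p_2,p_3$. For $0<\epsilon<r$ small, consider the $G$-invariant continuous density
\[
 \rho_{\epsilon}=c_\epsilon\Bigl(1-\sum_{j=1}^3 u^{p_j}_{\epsilon,r}\Bigr)+\text{(a $G$-invariant positive bump)},
\]
or, more cleanly, a density that is positive and of order one on three disjoint caps around the $p_j$ and very small, but continuous and nonnegative, on the complement. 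Any density that becomes almost entirely concentrated on three caps connected by thin necks works. The normalizing constant is chosen so that the total mass equals $4\pi$. Zeros are allowed at most at the two poles $\pm e_3$, which are the two $G$-fixed points; this is where the ``at most two points'' clause comes from. Admissibility holds because $\rho\in C^0$ is bounded.

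\textbf{Multiplicity two.} Since $\rho$ is $G$-invariant, each eigenspace of $\lambda_1(dv_h)$ is a representation of $S_3$. I will show that the first eigenspace is neither the trivial representation nor the sign representation, so it must contain the $2$-dimensional standard representation, which forces $\lambda_1=\lambda_2$. Using the logarithmic cutoffs $u^{p_j}_{\epsilon,r}$, the standard subspace $\{\sum a_j u^{p_j}:\sum a_j=0\}$ gives two test functions with Rayleigh quotient of order $1/\log(r/\epsilon)\to0$. Hence $\lambda_1=\lambda_2\to0$ in the three-bubble limit.

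To exclude an invariant eigenfunction or a sign-type eigenfunction at the bottom, I argue as follows. A sign-type function is odd under the reflections, so it vanishes on the three great-circle arcs and has at least six nodal domains. A $G$-invariant function orthogonal to constants has to carry mass across the caps, and its energy stays bounded below by a capacity estimate. Equivalently, I will compare $\lambda_1$ with the first eigenvalue of the restriction of the problem to the $G$-invariant and sign-isotypic subspaces, and show that this restricted eigenvalue stays bounded below while $\lambda_1\to0$. I expect this step to be the main obstacle. It needs a uniform lower bound for the invariant/sign part of the spectrum along the whole family, not just in the degenerate limit.

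\textbf{Filling $(0,2]$.} To reach every value of $\lambda$, I will use a continuous path of $G$-invariant densities $\rho_s$, $s\in[0,1]$, with $\rho_1\equiv1$ (round, $\lambda_1=\lambda_2=\lambda_3=2$) and $\rho_s$ degenerating to the three-bubble configuration as $s\to0$. Eigenvalues depend continuously on $\rho$ in $C^0$. So once the isotypic separation above holds along the whole path, $s\mapsto\lambda_1(\rho_s)=\lambda_2(\rho_s)$ is continuous, equals $2$ at $s=1$, and tends to $0$ as $s\to0$. The intermediate value theorem then gives every $\lambda\in(0,2]$.

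The separation is unproblematic near the round metric. There the first eigenspace is $\mathrm{span}\{x_1,x_2,x_3\}$, which decomposes as standard $\oplus$ sign, with $x_3$ carrying the sign representation. By Chebyshev-type reasoning as in Proposition~\ref{monotonicity_of_petrides}, concentrating mass toward the equator pushes the $x_3$-eigenvalue above the equatorial pair. This is done by choosing the interpolation $\rho_s=s+(1-s)\rho_0$ with $\rho_0$ equatorially concentrated. Uniformity along the whole path is exactly the hard step identified above.
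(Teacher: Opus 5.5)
There is a genuine gap, and it is the step you flag yourself as the main obstacle.

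Applying the intermediate value theorem to $s\mapsto\lambda_1(\rho_s)$ only produces measures with $\lambda_1=\lambda$. The content of the proposition is that at that same parameter $\lambda_1$ is not simple. So you need, at \emph{every} parameter of the path, that the $\lambda_1$-eigenspace is not a one-dimensional trivial or sign representation.

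Your sign-type argument is essentially fine. Such a function is odd across the three mirror great circles, so it has at least six nodal domains, and Courant's theorem for the weighted problem keeps it away from $\lambda_1$.

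The trivial type is the real difficulty, and symmetry alone cannot exclude it. The Petrides densities of Section~\ref{subsec:petrides-double-bubble} are invariant under your group $G$. Yet by Proposition~\ref{monotonicity_of_petrides} their $\lambda_1$ is simple, with a rotationally symmetric, hence $G$-invariant, eigenfunction. A ``capacity'' lower bound for the invariant part valid only in the three-bubble limit, plus first-order perturbation near $s=1$, leaves the middle of the path uncontrolled. In any case a mere lower bound is not what is needed: you need a strict comparison with the bottom of the standard-isotypic spectrum.

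There is also a slip. With the reflection in the meridian plane through $p_1$ (which fixes the poles, as you intend), $x_3$ is $G$-invariant. So near the round metric the first eigenspace splits as standard $\oplus$ trivial, not standard $\oplus$ sign. The branch you must push up is precisely the dangerous trivial one.

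The paper supplies the missing idea by making the fundamental domain rigid and comparing with Hersch's bound, rather than estimating the two branches separately.
\begin{itemize}
\item On the lune $L_{2\pi/3}$ it uses the pull-back of $g_{\rm rd}$ under a conformal map onto the round lune $L_t$ ($2\pi/3\le t\le\pi$) or the round cap $Z_{t/2}$ ($\pi\le t<2\pi$), and extends by the $S_3$-symmetry.
\item A trivial-type (resp. sign-type) eigenfunction then restricts to a Neumann (resp. Dirichlet) eigenfunction of a round lune or cap. Its spectrum is explicit (Gromes) or controlled by the Langford--Laugesen monotonicity and domain monotonicity.
\item This yields $\lambda\operatorname{Area}\ge 8\pi$ for such eigenvalues, strictly unless $t=2\pi/3$ (the round case, where multiplicity is automatic).
\item Hersch gives $\lambda_1\operatorname{Area}\le 8\pi$, so $\lambda_1$ cannot be simple for any $t$.
\end{itemize}

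As $t\to2\pi$ this family concentrates its mass at three equally spaced equatorial points. That is exactly the three-bubble degeneration you envisage. Your global strategy (symmetry, continuity, intermediate values) is the right one. What you lack is a family whose isotypic spectra can actually be bounded, together with the Hersch comparison that turns those bounds into non-simplicity of $\lambda_1$.
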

\begin{proof}	
	We use  geodesic polar coordinates on $\mathbb{S}^2$ and let
	\begin{equation*}
		L_\alpha
		=\bigl\{(r,\theta)\in\mathbb{S}^2:
		0\leq r\leq\pi,\ 0\leq\theta\leq\alpha\bigr\}
	\end{equation*}
	denote the spherical lune. By \cite{Gromes}, the Dirichlet eigenvalues of $L_\alpha$ are
	\begin{equation*}
		\left(\frac{m\pi}{\alpha}+j\right)
		\left(\frac{m\pi}{\alpha}+j+1\right),
		\qquad m\in\mathbb{N}^*, \ j\in\mathbb{N};
	\end{equation*}
	while the Neumann eigenvalues are given by the same formula with $m\in\mathbb{N}$ instead.  The sphere $\mathbb{S}^2=L_{2\pi}$ is the union of three congruent copies of $L_{2\pi/3}$. We shall first construct a metric on $L_{2\pi/3}$ and then extend it to the entire sphere by rotations.
	
	Our construction uses the following two families of conformal maps.
	\begin{enumerate}
		\item For $0<\alpha\leq\beta<2\pi$, let
		\begin{equation*}
			\varphi_{\alpha,\beta}:L_\alpha\to L_\beta
		\end{equation*}
		be the conformal map between the corresponding spherical lunes. It is constructed by first applying stereographic projection, which maps $L_\alpha$ onto a planar sector of angle $\alpha$, then applying the power map $z\mapsto z^{\beta/\alpha}$, and finally applying the inverse stereographic projection.
		
		\item Let $H=Z_{\pi/2}$ be the hemisphere centered at 
		$p=\left(\frac{\pi}{2},\frac{\pi}{2}\right)$. 
		For $\pi/2\leq t<\pi$, let $\psi_t$ be the M\"obius transformation mapping $H$ conformally onto the spherical cap $Z_t$ centered at $p$.
	\end{enumerate}	 
	
	Combining these two families, for $2\pi/3\leq t<2\pi$ we define $\Phi_t:L_{2\pi/3}\to\mathbb{S}^2$ by
	\begin{equation*} 
		\Phi_t(r,\theta)=
		\begin{cases}
			\varphi_{2\pi/3,t}(r,\theta),
			&   \frac{2\pi}{3}\leq t\leq\pi,\\ 
			\bigl(\psi_{t/2}\circ\varphi_{2\pi/3,\pi}\bigr)(r,\theta),
			&   \pi\leq t<2\pi.
		\end{cases}
	\end{equation*}
	Each map $\Phi_t$ is conformal, and we set $g_t=\Phi_t^*g_{\rm rd}$
	on $L_{2\pi/3}$.
	The reflection
	\begin{equation*}
		R(r,\theta)=\left(r,\frac{2\pi}{3}-\theta\right)
	\end{equation*}
	is an isometry of $(L_{2\pi/3},g_t)$. Consequently, the three rotated copies of this metric, obtained by rotations through $2\pi/3$ and $4\pi/3$, agree along their common boundary meridians and define a metric on $\mathbb{S}^2$. We continue to denote the resulting metric by $g_t$.  
    We point out that the metric $g_t$ has two singularities, at $r=0$ and $r=\pi$. More precisely,
	\begin{equation*}
		{g_t=\rho_tg_{\rm rd},}
	\end{equation*}
	where $\rho_t$ is continuous on $\mathbb{S}^2$, smooth away from the two poles,
and $\rho_t(0,\theta)=\rho_t(\pi,\theta)=0$ unless $t=2\pi/3$, when
$g_t=g_{\rm rd}$.  (For $2\pi/3<t<\pi$, the power map generally gives only
H\"older regularity at the poles, so no Lipschitz regularity is needed here.)
Since $\rho_t$ is bounded, Rellich compactness for
$W^{1,2}(\mathbb{S}^2,g_{\rm rd})\hookrightarrow L^2(dv_{g_{\rm rd}})$ implies compactness
into $L^2(\rho_tdv_{g_{\rm rd}})$; hence $dv_{g_t}$ is admissible.

	Now we prove that up to area normalization, the metrics $g_t$ have all the required properties. Since $dv_{g_t}=\rho_t\,dv_{g_{\rm rd}}$ is an admissible measure in the conformal class $[g_{\rm rd}]$, we interpret
	\begin{equation*}
		\lambda_k(\mathbb{S}^2,g_t)
		:=\lambda_k(dv_{g_t}).
	\end{equation*}
	It remains to establish the following three assertions:
	\begin{enumerate}
		\item $4\pi\leq\text{Area}(\mathbb{S}^2,g_t)\leq12\pi$;
		\item $\lambda_1(\mathbb{S}^2,g_t)=\lambda_2(\mathbb{S}^2,g_t)$;
		\item $\lambda_1(\mathbb{S}^2,g_{2\pi/3})=2$ and $\lim_{t\to2\pi}\lambda_1(\mathbb{S}^2,g_t)=0$.
	\end{enumerate}

    The first assertion follows immediately from the construction.  To prove the second assertion, we use the $S_3$-symmetry of $(\mathbb{S}^2,g_t)$. The key observation is the following.
    
    \begin{lemma}\label{simple eigenvalue}
        If $\lambda_k(\mathbb{S}^2,g_t)$ is simple for some $t$, then $k\geq 3$.
    \end{lemma}
    \begin{proof}
    	Let $\phi$ be a real eigenfunction associated with the simple eigenvalue $\lambda_k(\mathbb{S}^2,g_t)$. The symmetry group $S_3$ is generated by the reflection $R$ and the rotation $T$ through the angle $2\pi/3$. Since the eigenspace spanned by $\phi$ is one-dimensional, it carries a one-dimensional real representation of $S_3$. By the classification of irreducible representations of $S_3$, there are therefore only two possibilities:
    	\begin{enumerate}
    		\item $R^*\phi=\phi$ and $T^*\phi=\phi$;
    		\item $R^*\phi=-\phi$ and $T^*\phi=\phi$.
    	\end{enumerate}
    	
    	In the first case, $\phi$ is invariant under all reflections in the boundary meridians of the three lunes. Its restriction to $L_{2\pi/3}$ is therefore a Neumann eigenfunction of $(L_{2\pi/3},g_t)$. If $2\pi/3\leq t\leq\pi$, then, for some $(m,j)\in\mathbb{N}^2\setminus\{(0,0)\}$,
    	\begin{align*}
    		\lambda_k(\mathbb{S}^2,g_t)
    		\text{Area}(\mathbb{S}^2,g_t) \!=\!6t\left(\frac{m\pi}{t}\!+\!j\right)
    		\left(\frac{m\pi}{t}\!+\!j\!+\!1\right) 
    		\!\geq\!
    		\min\left\{
    		6\pi\left(\frac{\pi}{t}\!+\!1\right), 12t
    		\right\} \!\geq \! 8\pi.
    	\end{align*}
    	If $\pi\leq t<2\pi$, the monotonicity result for the first nonzero Neumann eigenvalue of a spherical cap \cite[Theorem 2]{LangfordLaugesen} gives
    	\begin{align*}
    		\lambda_k(\mathbb{S}^2,g_t)
    		\text{Area}(\mathbb{S}^2,g_t) \geq
    		3\lambda_1^N(Z_{t/2},g_{\rm rd})
    	\operatorname{Area}(Z_{t/2},g_{\rm rd})\geq12\pi.
    	\end{align*}

    	In the second case, every reflection acts on $\phi$ by multiplication by $-1$. Thus the restriction of $\phi$ to $L_{2\pi/3}$ satisfies Dirichlet boundary conditions.  For $2\pi/3\leq t\leq\pi$, this means  that   there exists $(m,j)\in\mathbb{N}^*\times \mathbb{N}$  such that
    	\begin{align*}
    		\lambda_k(\mathbb{S}^2,g_t)
    		\text{Area}(\mathbb{S}^2,g_t) =6t\left(\frac{m\pi}{t}+j\right)
    		\left(\frac{m\pi}{t}+j+1\right) \geq 6\pi\left(\frac{\pi}{t}+1\right) \geq 12\pi.
    	\end{align*}
    	For $\pi\leq t<2\pi$, we observe that the fact   $R^*\phi=-\phi$ implies $\phi$ vanishes on the middle meridian of the lune. In particular,  $\phi$ cannot be the first Dirichlet eigenfunction of $(L_{\frac{2\pi}{3}},g_t)$. This fact, together with the domain monotonicity of the Dirichlet eigenvalues, gives 
    	\begin{align*}
    		\lambda_k(\mathbb{S}^2\!,g_t)
    		\text{Area}(\mathbb{S}^2,g_t) \!\geq\!
    		3\lambda_2^D(Z_{t/2},g_{\rm rd}) \!
    		\operatorname{Area}(Z_{t/2},g_{\rm rd}) \!\geq\!
    		6\pi 
    		\lambda_2^D(Z_{t/2},g_{\rm rd}) \!\geq\! 12\pi.
    	\end{align*}

    	We thus proved in both cases that
    	\begin{equation*}
    		\lambda_k(\mathbb{S}^2,g_t)
    		\text{Area}(\mathbb{S}^2,g_t)\geq8\pi.
    	\end{equation*}
    	Equality can occur only when $t=2\pi/3$, in which case $g_t=g$ is the round metric. Since every positive eigenvalue of the round sphere has multiplicity greater than one, we conclude 
    	\begin{equation*}
    		\lambda_k(\mathbb{S}^2,g_t)
    		\text{Area}(\mathbb{S}^2,g_t)>8\pi.
    	\end{equation*}
    Hersch's scale-invariant inequality
    \[
      \lambda_1(\mathbb{S}^2,g_t)\,\operatorname{Area}(\mathbb{S}^2,g_t)\le8\pi
    \]
    rules out $k=1$.  Hence the first positive eigenvalue is not simple and
    \(\lambda_1(\mathbb{S}^2,g_t)=\lambda_2(\mathbb{S}^2,g_t)\).  It follows in turn that a
    simple positive eigenvalue must have index $k\ge3$.
    \end{proof}

    The lemma proves the second assertion. We now prove the third. Since $g_{2\pi/3}=g_{\rm rd}$,  
    \begin{equation*}
    	\lambda_1(\mathbb{S}^2,g_{2\pi/3})=2.
    \end{equation*}
    It remains to establish the limit as $t\to2\pi$. Let $p$ be the center of the cap $Z_{t/2}$. Since
    $d(-p,\partial Z_{t/2})=\pi-t/2$, the cutoff $u^{-p}_{\pi-t/2,1}$ equals $1$ on $\partial Z_{t/2}$. Hence, for $2\pi-2\leq t<2\pi$, the function
    \begin{equation*}
    	u_t(x)=
    	\begin{cases}
    		1,&x\notin L_{2\pi/3},\\
    		\bigl(\Phi_t^*u^{-p}_{\pi-t/2,1}\bigr)(x),&x\in L_{2\pi/3},
    	\end{cases}
    \end{equation*}
    is continuous across the boundary meridians and belongs to $W^{1,2}(\mathbb S^2)$.  Let
    \begin{equation*}
    	c_t
    	=\frac{1}{\text{Area}(\mathbb{S}^2,g_t)}
    	\int_{\mathbb{S}^2}u_t\,dv_{g_t}.
    \end{equation*}
    Then  
    \begin{align*}
    	\int_{\mathbb{S}^2}|du_t|_{g_t}^2\,dv_{g_t}
    	=\int_{L_{2\pi/3}}|du_t|_{g_t}^2\,dv_{g_t} =\int_{Z_{t/2}}|du^{-p}_{\pi-t/2,1}|_{g_{\rm rd}}^2\,dv_{g_{\rm rd}} \leq\frac{-2\pi}{\log (\pi-t/2)}
    \end{align*}
    and  
    \begin{align*}
    	\int_{\mathbb{S}^2}(u_t\!-\!c_t)^2 dv_{g_t} \!
    	 \geq
    	2\text{Area}(L_{2\pi/3},g_t)(1\!-\!c_t)^2
    	\!+\!2\pi c_t^2  \!\geq\!
    	\frac{2}{
    		\frac{1}{\text{Area}(L_{2\pi/3},g_t)}
    		\!+\!\frac{1}{\pi}}
    	\!\geq\!\frac{8\pi}{7}.
    \end{align*}
    Therefore, by the variational characterization of the first nonzero eigenvalue,
    \begin{align*}
    	\lambda_1(\mathbb{S}^2,g_t)
    	 \leq
    	\frac{\int_{\mathbb{S}^2}|du_t|_{g_t}^2\,dv_{g_t}
    	}{\int_{\mathbb{S}^2}(u_t-c_t)^2\,dv_{g_t}
    	} \leq
    	\frac{7}{4\log\!\left(1/(\pi-t/2)\right)}
    	\to 0
    	\qquad\text{as }t\to2\pi.
    \end{align*}

    We finally normalize the area. Set $\widetilde g_t
    =\frac{4\pi}{\text{Area}(\mathbb{S}^2,g_t)}\,g_t$. Then
    \begin{equation*}
    	\text{Area}(\mathbb{S}^2,\widetilde g_t)=4\pi
    \end{equation*}
    and, for every $k$,
    \begin{equation*}
    	\lambda_k(\mathbb{S}^2,\widetilde g_t)
    	=\frac{\text{Area}(\mathbb{S}^2,g_t)}{4\pi}
    	\lambda_k(\mathbb{S}^2,g_t).
    \end{equation*}
    Thus, for every $2\pi/3\leq t<2\pi$,
    \begin{equation*}
    	\lambda_1(\mathbb{S}^2,\widetilde g_t)
    	=\lambda_2(\mathbb{S}^2,\widetilde g_t),
    \end{equation*}
    while
    \begin{equation*}
    	\lambda_1(\mathbb{S}^2,\widetilde g_{2\pi/3})
    	=\lambda_2(\mathbb{S}^2,\widetilde g_{2\pi/3})=2
    \end{equation*}
    and
    \begin{equation*}
    	\lim_{t\to2\pi}\lambda_1(\mathbb{S}^2,\widetilde g_t)
    	=\lim_{t\to2\pi}\lambda_2(\mathbb{S}^2,\widetilde g_t)=0.
    \end{equation*}
    The conclusion follows from the continuity of the map $
    t\longmapsto\lambda_k(\mathbb{S}^2,\widetilde g_t)$.  	We refer to \cite[Section~6]{knpp} for the continuity of eigenvalues associated with measures whose densities belong to $L^p$, $p>1$. 
\end{proof}

\subsection{Convex interpolation and a degree argument}

We now join the double-bubble family of Section \ref{subsec:petrides-double-bubble} to the diagonal
family of Proposition~\ref{measures of multiplicity 2}.  For
$u\in[0,1)$, extend the Petrides family continuously by setting $g_1:=g_{\rm rd}$ and let
\begin{equation*}
	\nu_u^-:=dv_{g_{1-u}}.
\end{equation*}
Thus $\nu_0^-=dv_{g_{\text{rd}}}$ and, as $u\to 1^-$, $(\lambda_1(\nu_u^-),\lambda_2(\nu_u^-))\to(0,4)$. Choose the continuous diagonal family produced in the preceding proof and 
parameterize it by the same interval $u\in[0,1)$, with
$\nu_0^+=dv_{g_{\text{rd}}}$ and $\lambda_1(\nu_u^+)=\lambda_2(\nu_u^+)\to 0,u\to 1^-$.

For $(\sigma,u)\in[0,1]\times[0,1)$ set
\begin{equation}\label{eq:convex-family}
 	\nu_{\sigma,u}:=(1-\sigma)\nu_u^-+\sigma\nu_u^+.
\end{equation}
Every measure in \eqref{eq:convex-family} has mass $4\pi$ and is admissible. The following lemma is needed.
\iffalse
Indeed, if a sequence is bounded in $W^{1,2}$, compactness of the two endpoint
embeddings gives, after taking a common subsequence, convergence in both
$L^2(\nu_u^-)$ and $L^2(\nu_u^+)$; the identity
\[
 \|f\|_{L^2(\nu_{\sigma,u})}^2
 =(1-\sigma)\|f\|_{L^2(\nu_u^-)}^2
 +\sigma\|f\|_{L^2(\nu_u^+)}^2
\]
then gives convergence in $L^2(\nu_{\sigma,u})$.
\fi

\begin{lemma}\label{lem:joint-uniform}
	We have the inequality
	\begin{equation}\label{eq:uniform-endpoint-bound}
		\sup_{0\le\sigma\le1}\lambda_1(\nu_{\sigma,u})\leq 2\max\{\lambda_1(\nu_u^-),\lambda_1(\nu_u^+)\}.
	\end{equation}
	In particular, the left-hand side tends to zero as $u\to 1^-$.
\end{lemma}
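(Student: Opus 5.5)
We need $\lambda_1$ of the convex combination $\nu=(1-\sigma)\nu^-+\sigma\nu^+$ to be at most twice the larger endpoint first eigenvalue. The plan is a two-dimensional min--max argument, using the test functions of the two endpoints together.

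\emph{Step 1: an upper bound from a 2-dimensional space.} By \eqref{def_of_eigenvalue} with $k=1$, for any two-dimensional space $V$ of functions that stays two-dimensional in $L^2(\nu)$,
\[
\lambda_1(\nu)\le\sup_{0\ne f\in V}\frac{\int|df|^2\,dv_{g_{\rm rd}}}{\int f^2\,d\nu}.
\]

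\emph{Step 2: the test space.} Let $\varphi^\pm$ be first eigenfunctions of $\nu_u^\pm$, with $\int\varphi^\pm\,d\nu_u^\pm=0$. By approximation we may assume they are smooth. Take
\[
V=\operatorname{span}\{1,\varphi^-\}\quad\text{if }\sigma\le\tfrac12,\qquad V=\operatorname{span}\{1,\varphi^+\}\quad\text{if }\sigma\ge\tfrac12.
\]
Consider the case $\sigma\le\frac12$; the other is symmetric. For $f=a+b\varphi^-$, the energy is $b^2\int|d\varphi^-|^2=b^2\lambda_1(\nu^-)\int(\varphi^-)^2\,d\nu^-$.

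For the mass, drop the nonnegative $\nu^+$ part: $\int f^2\,d\nu\ge(1-\sigma)\int f^2\,d\nu^-$. Since $\varphi^-$ has $\nu^-$-mean zero, $\int f^2\,d\nu^-=4\pi a^2+b^2\int(\varphi^-)^2\,d\nu^-$, which is at least $b^2\int(\varphi^-)^2\,d\nu^-$.

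\emph{Step 3: the bound.} For $b\neq0$ the Rayleigh quotient is therefore at most $\lambda_1(\nu^-)/(1-\sigma)\le2\lambda_1(\nu^-)$. For $b=0$ it vanishes. Nondegeneracy of $V$ in $L^2(\nu)$ follows from the same lower bound, since $(1-\sigma)>0$ and $\nu^-$ already separates $1$ and $\varphi^-$. The symmetric case uses $\sigma\ge\frac12$ and $\nu^+$.

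Taking the supremum over $\sigma$ gives \eqref{eq:uniform-endpoint-bound}. The limit statement follows because $\lambda_1(\nu_u^\pm)\to0$ as $u\to1^-$: for $\nu_u^-$ this holds by the Petrides asymptotics of Section~\ref{subsec:petrides-double-bubble}, and for $\nu_u^+$ by Proposition~\ref{measures of multiplicity 2}.

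\emph{Main obstacle.} The only delicate point is the admissibility/smoothness technicality: eigenfunctions lie in $W^{1,2}$ rather than $C^\infty$. I would handle it by density, approximating $\varphi^\pm$ by smooth functions, which perturbs the quotients by an arbitrarily small amount thanks to the compact embedding into $L^2(\nu^\pm)$.
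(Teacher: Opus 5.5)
Your argument is correct and is essentially the paper's proof. The supremum of the Rayleigh quotient over $\operatorname{span}\{1,\varphi^-\}$ is exactly $\int|d\varphi^-|^2\,dv_{g_{\rm rd}}/\operatorname{Var}_{\nu}(\varphi^-)$, and your step of dropping the $\nu_u^+$ term and using $1-\sigma\ge\frac12$ is precisely the paper's bound $\operatorname{Var}_{\nu_{\sigma,u}}\ge(1-\sigma)\operatorname{Var}_{\nu_u^-}$.

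On smoothness: you can avoid eigenfunctions entirely by applying your bound to every smooth $\nu_u^-$-mean-zero test function and taking the infimum. The paper passes over this technicality silently.
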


\begin{proof}
For a mass-$4\pi$ measure $\nu$ define
\begin{equation*}
	\operatorname{Var}_\nu(v):=\inf_{c\in\mathbb{R}}\int_{\mathbb{S}^2}(v-c)^2\,d\nu.
\end{equation*}
The first eigenvalue can be written as
\begin{equation}\label{eq:lambda-one-variance}
	\lambda_1(\nu)
	=\inf_{\operatorname{Var}_\nu(v)>0}
	\frac{\int_{\mathbb{S}^2}|dv|_{g_{\text{rd}}}^2\,dv_{g_{\text{rd}}}}
		{\operatorname{Var}_\nu(v)}.
\end{equation}
For every $v$,
\begin{equation*}
	\begin{split}
		\operatorname{Var}_{\nu_{\sigma,u}}(v)
 		&=\inf_c\left[(1-\sigma)\int(v-c)^2d\nu_u^-
        +\sigma\int(v-c)^2d\nu_u^+\right]\\
		&\geq\max\{(1-\sigma)\operatorname{Var}_{\nu_u^-}(v),
            \sigma\operatorname{Var}_{\nu_u^+}(v)\}.
	\end{split}
\end{equation*}
Let $\phi_u^-$ be a first eigenfunction normalized by
$\operatorname{Var}_{\nu_u^-}(\phi_u^-)=1$.  If $0\le\sigma\le1/2$, then
\[
 \operatorname{Var}_{\nu_{\sigma,u}}(\phi_u^-)
 \ge1-\sigma\ge\frac12,
\]
so \eqref{eq:lambda-one-variance} gives
$\lambda_1(\nu_{\sigma,u})\le2\lambda_1(\nu_u^-)$.  For
$1/2\le\sigma\le1$ the same argument with a first eigenfunction of $\nu_u^+$
gives $\lambda_1(\nu_{\sigma,u})\le2\lambda_1(\nu_u^+)$.  This proves
\eqref{eq:uniform-endpoint-bound}.
\end{proof}

Consider the continuous spectral map
\begin{equation*}
 \Lambda(\sigma,u)
 :=\bigl(\lambda_1(\nu_{\sigma,u}),\lambda_2(\nu_{\sigma,u})\bigr).
\end{equation*}
Continuity follows from the standard continuity theorem for eigenvalues of
admissible measures along these density families; see \cite[Section~6]{knpp}.
At $u=0$ both endpoint measures are round, so
$\Lambda(\sigma,0)=(2,2)$ for $0\le\sigma\le1$. The side $\sigma=0$ is the
Petrides family, whereas the side $\sigma=1$ is the diagonal family.

Let $\Omega$ be the domain bounded by
\begin{enumerate}
 \item the Petrides path $u\mapsto\Lambda(0,u)$ from $(2,2)$ to its limit $(0,4)$;
 \item the vertical segment from $(0,4)$ to $(0,0)$;
 \item the diagonal path $u\mapsto\Lambda(1,u)$ traversed in reverse, from $(0,0)$ to $(2,2)$.
\end{enumerate}
Indeed, by Proposition \ref{monotonicity_of_petrides} we know (1)-(3) forms a Jordan curve. Figure~\ref{fig:joint-region-intro} is a numerical
rendering of that situation.

\begin{proposition}\label{imageunderred}
Every point of $\Omega$ belongs to the joint spectral range $\mathscr R$.
Boundary points lying on the Petrides or diagonal arcs are realized by the
corresponding endpoint families.
\end{proposition}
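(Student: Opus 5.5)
The plan is a topological degree (winding number) argument for the continuous map $\Lambda:[0,1]\times[0,1)\to\mathbb R^2$. Fix a point $q\in\Omega$ (open interior). We will show that $q$ lies in the image of $\Lambda$ by exhibiting a closed loop in the parameter square whose image winds once around $q$. Since the square is contractible, a point not in the image would force the winding number to be zero, which is a contradiction.

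\textbf{Truncation.} Fix $u_0<1$ close to $1$ and work on the rectangle $Q_{u_0}=[0,1]\times[0,u_0]$. The image of $\partial Q_{u_0}$ splits into four pieces.
\begin{itemize}
\item The bottom side $u=0$ collapses to the single point $(2,2)$.
\item The side $\sigma=0$ traces the Petrides arc from $(2,2)$ to $\Lambda(0,u_0)$.
\item The side $\sigma=1$ traces the diagonal arc from $(2,2)$ to $\Lambda(1,u_0)$.
\item The top side $u=u_0$ is a path from $\Lambda(0,u_0)$ to $\Lambda(1,u_0)$.
\end{itemize}
By Lemma~\ref{lem:joint-uniform}, every point of the top path has $\lambda_1\le \delta(u_0):=2\max\{\lambda_1(\nu_{u_0}^-),\lambda_1(\nu_{u_0}^+)\}$, and $\delta(u_0)\to0$ as $u_0\to1$. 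The top path therefore lies in the thin strip $\{0<\lambda_1\le\delta\}$.

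\textbf{Comparison with $\partial\Omega$.} We compare the loop $\Lambda(\partial Q_{u_0})$ with the Jordan curve $\partial\Omega$. Choose $u_0$ so close to $1$ that two things hold. First, $\delta(u_0)<\operatorname{dist}(q,\partial\Omega)$ and $\delta(u_0)<q_1$, so that $q$ stays away from the strip. Second, $\Lambda(0,u_0)$ and $\Lambda(1,u_0)$ are close to $(0,4)$ and $(0,0)$ respectively. We then homotope the loop to $\partial\Omega$ without crossing $q$, using a straight-line homotopy inside the strip. The top path and the vertical segment $\{0\}\times[0,4]$ both lie in the half-plane $\{\lambda_1\le\delta\}$, which does not contain $q$. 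Because that half-plane is convex, the straight-line homotopy between the two paths, reparametrized to agree at their endpoints, stays inside it. To handle the endpoints, close up the short gaps: one from $\Lambda(0,u_0)$ to $(0,4)$ along the Petrides arc tail, and one from $(0,0)$ to $\Lambda(1,u_0)$ along the diagonal tail. Both gaps also lie in the strip once $u_0$ is near $1$; for the diagonal tail this holds since $\lambda_1=\lambda_2$ there.

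\textbf{Conclusion.} After this homotopy, $\Lambda(\partial Q_{u_0})$ has the same winding number about $q$ as $\partial\Omega$. That number is $\pm1$ by the Jordan curve theorem, since $\partial\Omega$ is a Jordan curve by Proposition~\ref{monotonicity_of_petrides} and $q$ lies in its interior. If $q\notin\Lambda(Q_{u_0})$, then $\Lambda|_{\partial Q_{u_0}}$ extends to a map $Q_{u_0}\to\mathbb R^2\setminus\{q\}$, so its winding number is $0$, a contradiction. Hence $q=\Lambda(\sigma,u)$ for some parameters $(\sigma,u)$, and $\nu_{\sigma,u}$ is an admissible measure of mass $4\pi$. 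The boundary statement is immediate: points on the Petrides or diagonal arcs are the images $\Lambda(0,u)$ or $\Lambda(1,u)$ by construction.

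\textbf{Main obstacle.} The delicate step is the bookkeeping near the degenerate corners $(0,4)$ and $(0,0)$, where the family is only defined on the half-open interval $u<1$. One must check that the arc tails beyond $u_0$, and the straight-line homotopy, stay inside $\{\lambda_1\le\delta\}$. This requires $\lambda_1\to0$ along both endpoint families, which is given for the Petrides family by \cite{pet25} and for the diagonal family by Proposition~\ref{measures of multiplicity 2}. Together with the uniform control of Lemma~\ref{lem:joint-uniform}, this is enough. I would phrase the final step as a winding-number computation for the closed curve obtained by concatenating the three pieces described above.
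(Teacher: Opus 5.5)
Your proposal is correct and follows essentially the same route as the paper. You truncate the parameter square at $u$ close to $1$, use Lemma~\ref{lem:joint-uniform} to push the image of the top side into a thin strip $\{\lambda_1<\varepsilon\}$, and run a degree argument against the Jordan curve $\partial\Omega$ from Proposition~\ref{monotonicity_of_petrides}. The only difference is packaging: you compute a winding number about each fixed $q\in\Omega$ by a straight-line homotopy in a convex half-plane, whereas the paper composes $\Lambda$ with a retraction onto $\overline{\Omega}_\varepsilon$ and then lets $\varepsilon\to0$. A possibly non-monotone parametrization of the diagonal side is harmless, since it can be homotoped rel endpoints within the diagonal segment, which avoids $q$.
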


\begin{proof}
By Lemma~\ref{lem:joint-uniform}, there exists $\delta>0$ sufficiently small, such that
\begin{equation}\label{eq:top-left-strip}
 \lambda_1(\nu_{\sigma,u})<\varepsilon
 \qquad\text{for every }(\sigma,u)\in[0,1]\times[1-\delta,1).
\end{equation}
Consider $\Lambda$ on the rectangle
$\Gamma_\delta=[0,1]\times[0,1-\delta]$. Its bottom side is collapsed to $(2,2)$; its left and right sides are the truncated Petrides and diagonal paths; and by \eqref{eq:top-left-strip} its top side lies in the half-plane
$\{x<\varepsilon\}$. 

Let $j_{\varepsilon}:[0,4]^2\to\Omega_{\varepsilon}$ be a strong deformation retraction, which fixed $\Omega_{\epsilon}$ and maps $[0,4]^2\setminus\Omega_{\epsilon}$ onto $\partial\Omega_{\varepsilon}$. Set $F_{\varepsilon}=j_{\varepsilon}\circ\Lambda$, which maps $\Gamma_{\delta}$ to $\overline{\Omega}_{\varepsilon}$ with non-zero degree on the boundary. Hence, $\Omega_{\varepsilon}$ belongs to the image of $F_{\varepsilon}$, and thus $\Omega_{\varepsilon}\subset\mathscr{R}$. Finally, let $\varepsilon\to 0$ we can conclude the result.
\end{proof} 

\begin{figure}[htbp] 
	\begin{minipage}{0.3\linewidth}
		\centering
		\begin{tikzpicture}[scale=1.5]
			% axis
			\draw[->] (0,0) -- (1.3,0) node[right] {$\sigma$};
			\draw[->] (0,0) -- (0,1.3) node[above] {$u$};
			
			\fill[green!12]
			plot (0,0) -- (1,0) -- (1,0.95) -- (0,0.95)
			-- cycle;
			\node at (0.5,0.5) {$\Gamma_{\delta}$};
			
			\draw[thick] (0,0.93) -- (1,0.93);
			\draw[thick, dashed] (-0.5,0.93) -- (0,0.93);
			\draw[thick, dashed] (1,0.93) -- (1.5,0.93);
			\node[below right] at (1,0.93) {\footnotesize $u=1-\delta$};
			
			\draw[thick,yellow] (0,0) -- (1,0);
			\draw[thick,red] (0,0) -- (0,1);
			\draw[thick,blue] (1,0) -- (1,1);
			\draw[dashed,thick] (0,1) -- (1,1);
			
			\foreach \x in {1} {\draw (\x,0.05) -- (\x,-0.05) node[below] {\x};}
			\foreach \y in {1} {\draw (0.05,\y) -- (-0.05,\y) node[above left] {\y};}
			\node[below left] at (0,0) {0};
		\end{tikzpicture}
	\end{minipage}
	$\xrightarrow{\ \ F_{\varepsilon}\ \ }$
	\begin{minipage}{0.5\linewidth}
		\centering
		\begin{tikzpicture}[scale=1]
			% axis
			\draw[->] (0,0) -- (2.8,0) node[right] {$x$};
			\draw[dashed] (0,0) -- (0,4);
			\draw[->] (0,4) -- (0,4.2) node[above] {$y$};
			\node[below left] at (0,0) {0};
			
			% the region
			\fill[green!12]
			plot[domain=0.1:2] (\x, \x) --                          
			% \lambda_1=\lambda_2       
			plot[domain=2:0.1](\x, {-0.16155*\x^4+1.021216*\x^3-1.896216*\x^2+4}) -- cycle;
			\node at (1,2) {$\Omega_{\epsilon}$};
			
			\draw[thick] (0.1,0.1) -- (0.1,3.97);
			\draw[thick, dashed] (0.1,-0.3) -- (0.1,0.1);
			\draw[thick, dashed] (0.1,3.97) -- (0.1,4.3);
			\node[below right] at (0.1,-0.3) {\footnotesize $x=\epsilon$};
			
			% \lambda_1=\lambda_2
			\draw[thick, blue] (0,0) -- (2,2);
			\node[gray,above right] at (1.3,1) {\footnotesize $x=y$};

			% standard deformation
			\draw[thick, red, samples=80, domain=0:2] 
			plot(\x, {-0.16155*\x^4+1.021216*\x^3-1.896216*\x^2+4});
			
			% notations
			\foreach \x in {2} {\draw (\x,0.05) -- (\x,-0.05) node[below] {\x};}
			\foreach \y in {2,4} {\draw (0.05,\y) -- (-0.05,\y) node[left] {\y};}
			
			% (2,2) standard metric
			\fill[yellow] (2,2) circle (1.2pt);
			\node[right] at (2,2) {\footnotesize standard metric};
		\end{tikzpicture}
	\end{minipage}
	{\caption{An illustration of the degree argument.}}\label{fig_3}	 
\end{figure}

\section{Refined relation for the first two eigenvalues}
\label{sec:first-two-refined}

According to Hersch's inequality \eqref{Hersch}, the first two eigenvalues satisfy the  inequality $\frac 1{\lambda_1}+\frac 2{\lambda_2} \ge \frac 32$. In what follows we prove a stronger  relation between $\lambda_1$ and $\lambda_2$. As a result, we are able to exclude the region shaded by pink lines in Figure \ref{fig:joint-region-intro} from the image of $\mathscr{R}$.

\begin{theorem}\label{thm:uniform-directional-bound}
Let $\mu$ be an admissible measure on $(\mathbb S^2,[g_{\rm rd}])$ with
$\mu(\mathbb S^2)=4\pi$. %For $a>0$, set
%\[ I_a=[0,\min\{a,3/2\}).\]
Then
\begin{equation}\label{eq:uniform-directional-bound}
 \lambda_2(\mu)^{-1}\ge
 \inf_{t\in [0,\lambda_1(\mu)^{-1})}
 \Theta_{\lambda_1^{-1}(\mu)}(t),
\end{equation}
where $\Theta_a$ is defined in \eqref{theta}. Numerically, when
$\lambda_1(\mu)=1$, the explicit one-dimensional bound gives
$\lambda_2(\mu)\lesssim3.260142$.
\end{theorem}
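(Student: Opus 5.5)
We use the matrix method of Section~\ref{sec:prelim-sphere} with $\mu$ balanced by Lemma~\ref{lem:hersch-normalization}, and $B$ diagonal with $B_{11}\ge B_{22}\ge B_{33}$. Write $a=\lambda_1(\mu)^{-1}$ and $A=B-\frac12I=\operatorname{diag}(a_1,a_2,a_3)$ with $\sum a_i=0$ and $a_i\ge-\frac12$. Instead of summing all three reciprocal eigenvalues as in Proposition~\ref{Prop:Hlowerbdbytrace}, we only use the top two. The two-dimensional trace principle (Ky Fan) gives
\[
\lambda_1^{-1}+\lambda_2^{-1}\ge\sum_{i=1}^2 (K_\mu v_i,v_i)_D
\]
for any Dirichlet-orthonormal pair $v_1,v_2$. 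As trial pair we take the normalizations of $v_i=e_i+tU_i(A)$, $i=1,2$, as in the proof of Proposition~\ref{Prop:Hlowerbdbytrace}. These functions are mutually Dirichlet-orthogonal, since $A$ is diagonal.

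\textbf{Lower bound for each Rayleigh term.} For each $i$ we use the Cauchy--Schwarz estimate
\[
(K_\mu v_i,v_i)_D\ge \frac{(K_\mu e_i, v_i)_D^2}{(K_\mu e_i,e_i)_D}.
\]
Here $(K_\mu e_i,e_i)_D=\frac12+a_i$, and $(K_\mu e_i,U_i(A))_D=(Ce_i,U_i(A))_D$. By the identity in \eqref{(AH)_2} specialized to diagonal $A$, this cross term is computable in terms of $A$; summing over all $i$ gives $\frac12\operatorname{tr}A^2$. The difficulty is that for a partial sum over $i=1,2$ the individual terms $(Ce_i,U_i(A))_D$ are not determined by $A$ alone. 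I would therefore use a one-parameter family $H$, interpolating between $A$ and $\operatorname{diag}(\frac23,-\frac13,-\frac13)$ as in Section~\ref{sec:sharp-first-remainder}, so that the $i=3$ contribution is controlled. That contribution is bounded using $\|Ce_3\|_D^2\le\lambda_3^{-2}-B_{33}^2$ together with \eqref{eq:qs-A-small}. This yields a lower bound on $\lambda_1^{-1}+\lambda_2^{-1}$ of the form $\frac12+a_1+\frac12+a_2+Q_t(a_1,a_2)$, where $Q_t$ is an explicit rational function. Combined with $a_1\le a-\frac12$, coming from \eqref{eq:qs-interlace}, and the Schur-convexity monotonicity of Lemma~\ref{lem:Psi-Schur}, this reduces to a bound on $\lambda_2^{-1}$ in terms of $a$ and a free parameter.

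\textbf{One-dimensional reduction.} After eliminating $a_2$ and $a_3$ by monotonicity (worst case $a_1=a-\frac12$, with the remaining mass arranged extremally under $a_i\ge-\frac12$), the bound becomes a function of the remaining free variable. It is natural to parametrize this variable as $t\in[0,a)$, which is why the infimum in \eqref{eq:uniform-directional-bound} runs over that range. The resulting explicit expression is $\Theta_a(t)$, defined in \eqref{theta}, and taking the worst $t$ gives the theorem. The numerical value $\lambda_2\lesssim3.260142$ at $\lambda_1=1$ then follows by minimizing $\Theta_1$ on $[0,1)$.

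The main obstacle is the first step: controlling the partial-sum cross terms $(Ce_i,U_i(A))_D$ for $i=1,2$ separately, rather than only their full trace. I would first try to bound the discarded $i=3$ term by $\|Ce_3\|_D\,\|U_3(A)\|_D$ using the refined per-index estimate \eqref{(AH)_1}, and accept whatever loss this introduces into $\Theta_a$.
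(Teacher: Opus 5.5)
\textbf{Verdict: genuine gap.} The gap sits exactly at the step you flag as the main obstacle, and the surrounding route does not lead to $\Theta_a$.

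\textbf{Why the proposed route fails.} The Ky Fan principle for the top two eigenvalues only bounds $\lambda_1^{-1}+\lambda_2^{-1}$. Subtracting the prescribed $a=\lambda_1^{-1}$ leaves $\lambda_2^{-1}\ge B_{11}+B_{22}-a+G$. Here the gain $G\ge0$ from $tU_i(A)$ depends on the partial cross terms $(Ce_i,U_i(A))_D$, $i=1,2$, which you cannot bound from below. Without $G$, this is weaker than plain interlacing $\lambda_2^{-1}\ge B_{22}$ from \eqref{eq:qs-interlace}, because $B_{11}\le a$ and $a-B_{11}$ need not be small.

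Your tools for the $i=3$ term do not repair this.
\begin{itemize}
\item The per-index bound $\|Ce_3\|_D^2\le\lambda_3^{-2}-B_{33}^2$ is not justified; only the summed form \eqref{eq:qs-A-small} is proved. In a basis diagonalizing $B$, $e_3$ may have a nonzero component along the first eigenfunction, and then $\|K_\mu e_3\|_D$ sees $\lambda_1^{-1}$.
\item Both that bound and \eqref{eq:qs-A-small} bring in $\lambda_3$ or $\mathcal H(\mu)$, which are not controlled by $(\lambda_1,\lambda_2)$.
\item The elimination step is misdirected. The leading part $1+a_1+a_2-a$ of your bound increases with $a_1$, so $a_1\le a-\frac12$ cannot be used to set $a_1=a-\frac12$ as a worst case.
\item Lemma~\ref{lem:Psi-Schur} concerns the deficit bound $\Psi$, not this quantity.
\item The infimum over $t$ in \eqref{eq:uniform-directional-bound} shows that $t$ is an unknown of the configuration. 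A trial parameter would instead be optimized, giving a supremum. So the identification of your outcome with $\Theta_a$ is asserted, not derived.
\end{itemize}

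\textbf{The missing idea.} Use no trial functions at all. Instead, bound the off-diagonal block from above using the spectral structure of $K_\mu$, in a basis adapted to the first eigenfunction rather than one diagonalizing $B$.

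If $\lambda_2>2$, then $\lambda_1\le2<\lambda_2$. Take $\phi$ a first eigenfunction and $\Pi$ the projection onto $\mathbb R\phi$. Then $K_\mu\le\lambda_2^{-1}I+(\lambda_1^{-1}-\lambda_2^{-1})\Pi$ and $K_\mu^2\le\lambda_2^{-1}K_\mu+\lambda_1^{-1}(\lambda_1^{-1}-\lambda_2^{-1})\Pi$.

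Compress to $E$ in a basis with $e_1\in\mathbb R P\phi$. Here $P\phi\ne0$, since otherwise $\operatorname{tr}B\le3/\lambda_2<\frac32$. Using $PK_\mu^2P=B^2+C^*C$ gives:
\begin{itemize}
\item $B_{11}\le\lambda_1^{-1}$ and $B_{22}+B_{33}\le2\lambda_2^{-1}$;
\item $(C^*C)_{11}\le B_{11}(\lambda_1^{-1}-B_{11})$;
\item $(C^*C)_{jj}\le B_{jj}(\lambda_2^{-1}-B_{jj})$ for $j=2,3$.
\end{itemize}
No $\lambda_3$ appears anywhere.

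Insert these into the per-index estimate \eqref{(AH)_1} with the fixed $H=\operatorname{diag}(\frac23,-\frac13,-\frac13)$. For this $H$, $\langle A,H\rangle_{\rm HS}=B_{11}-\frac12$, $\|U_1(H)\|_D^2=\frac27$ and $\|U_j(H)\|_D^2=\frac4{21}$. Merge the $j=2,3$ terms by concavity of $x\mapsto\sqrt{x(c-x)}$; the result is \eqref{B_2}.

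Solve \eqref{B_2} for $\lambda_2^{-1}$ with $t=B_{11}$ and $B_{22}+B_{33}=\frac32-t$. This gives exactly $\lambda_2^{-1}\ge\Theta_{\lambda_1^{-1}}(B_{11})$, and the infimum absorbs the unknown $B_{11}$. The case $\lambda_2\le2$ is immediate from $\Theta_a(\frac12)=\frac12$.
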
 

Throughout this section $\mu$ is Hersch-balanced, has mass $4\pi$, and the
notation of Section \ref{subsec:matrix-prelim} is in force. A trace-only argument discards the direction selected by the first eigenfunction.  Retaining that direction gives a one-parameter inequality that can be solved for $1/\lambda_2$ for every prescribed value of
$\lambda_1$. We first need 
\begin{lemma} \label{lem:mx-loewner}
Assume $\lambda_1(\mu)<\lambda_2(\mu)$.  Let $\phi$ be a
Dirichlet-normalized first eigenfunction of $K_\mu$.  
Then
\begin{equation}\label{eq:mx-Bloewner}
	\frac{1}{\lambda_1(\mu)}P\phi \otimes P\phi
	\leq B
	\leq \frac{1}{\lambda_2(\mu)}I+\left(\frac{1}{\lambda_1(\mu)}-\frac{1}{\lambda_2(\mu)}\right) P\phi\otimes P\phi,
\end{equation}
and
\begin{equation}\label{eq:mx-Cloewner}
	C^*C
	\leq \frac{1}{\lambda_2(\mu)}B+\frac{1}{\lambda_1(\mu)}\left(\frac{1}{\lambda_1(\mu)}-\frac{1}{\lambda_2(\mu)}\right) P\phi\otimes P\phi-B^2.
\end{equation}
\end{lemma}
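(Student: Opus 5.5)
Both Loewner inequalities come from the spectral decomposition of $K_\mu$ on $\mathscr H_\mu$, compressed to $E$ by $P$. Write
\[
K_\mu=\lambda_1^{-1}\,\phi\otimes\phi+K',
\]
where $\phi\otimes\phi$ is the Dirichlet-orthogonal rank-one projection onto the first eigenfunction. Because $\lambda_1<\lambda_2$, the first eigenspace is spanned by $\phi$. The operator $K'$ is $K_\mu$ restricted to $\phi^\perp$ (and zero on $\phi$), so
\[
0\le K'\le \lambda_2^{-1}(I-\phi\otimes\phi).
\]

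\textbf{Inequality \eqref{eq:mx-Bloewner}.} Compress the decomposition by $P$:
\[
B=PK_\mu P|_E=\lambda_1^{-1}\,P\phi\otimes P\phi+PK'P.
\]
The term $PK'P$ is positive semidefinite, which gives the lower bound. For the upper bound, compress $K'\le\lambda_2^{-1}(I-\phi\otimes\phi)$:
\[
PK'P\le \lambda_2^{-1}\bigl(I_E-P\phi\otimes P\phi\bigr).
\]
Adding $\lambda_1^{-1}P\phi\otimes P\phi$ gives exactly the right-hand side of \eqref{eq:mx-Bloewner}. Here $P\phi\otimes P\phi$ means the map $v\mapsto (v,P\phi)_D\,P\phi$ on $E$.

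\textbf{Inequality \eqref{eq:mx-Cloewner}.} Use the identity noted before Proposition~\ref{cor:mx-A-C}:
\[
C^*C=PK_\mu^2P-B^2.
\]
It therefore suffices to bound $PK_\mu^2P$. Since $\phi$ is an eigenvector, the two pieces of the decomposition commute and $K'\phi=0$, so
\[
K_\mu^2=\lambda_1^{-2}\,\phi\otimes\phi+(K')^2.
\]
On the range of $K'$ all eigenvalues are at most $\lambda_2^{-1}$, so $(K')^2\le\lambda_2^{-1}K'$. Since $K'=K_\mu-\lambda_1^{-1}\phi\otimes\phi$, this gives
\[
K_\mu^2\le \lambda_2^{-1}K_\mu+\bigl(\lambda_1^{-2}-\lambda_1^{-1}\lambda_2^{-1}\bigr)\phi\otimes\phi
=\lambda_2^{-1}K_\mu+\lambda_1^{-1}\bigl(\lambda_1^{-1}-\lambda_2^{-1}\bigr)\phi\otimes\phi .
\]
Compressing by $P$ yields
\[
PK_\mu^2P\le \lambda_2^{-1}B+\lambda_1^{-1}\bigl(\lambda_1^{-1}-\lambda_2^{-1}\bigr)P\phi\otimes P\phi,
\]
and subtracting $B^2$ gives \eqref{eq:mx-Cloewner}.

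\textbf{Main obstacle.} No step is hard; the points needing care are the following.
\begin{itemize}
\item The Loewner inequality for $(K')^2$ requires $K'$ to be positive and compact with spectrum in $[0,\lambda_2^{-1}]$. This holds because the eigenvalues of $K_\mu$ are $\lambda_j^{-1}$ with an orthonormal eigenbasis $\phi_j$.
\item Compression by $P$ preserves Loewner order, since $\langle PXPv,v\rangle=\langle X Pv,Pv\rangle$.
\item The operator order on $\mathscr H_\mu$ transfers to matrices on $E$ because $e_1,e_2,e_3$ is a Dirichlet-orthonormal basis of $E$.
\item The hypothesis $\lambda_1<\lambda_2$ is used only to guarantee that $\phi$ spans the whole top eigenspace.
\end{itemize}
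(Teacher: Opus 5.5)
Your proof is correct and follows essentially the same route as the paper's: you split off the rank-one spectral projection onto $\phi$, use $0\le K'\le\lambda_2^{-1}(I-\phi\otimes\phi)$ and $(K')^2\le\lambda_2^{-1}K'$, and then compress to $E$ using $P(\phi\otimes\phi)P=P\phi\otimes P\phi$ and $PK_\mu^2P=B^2+C^*C$. This is exactly the paper's argument with $L=K'$ and $\Pi=\phi\otimes\phi$.
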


\begin{proof}
Let $\Pi$ be the
orthogonal projection onto $\mathbb{R}\phi$, and let $L= K_\mu- r_1\Pi$. By the spectral theorem,
\[ 
 L\Pi=\Pi L=0,
 \qquad 0\le L\le r_2(I-\Pi).
\]
Compressing the inequalities
$r_1\Pi\le K_\mu\le r_2I+(r_1-r_2)\Pi$ to $E$ gives
\eqref{eq:mx-Bloewner}, because $P\Pi P= P\phi \otimes P\phi$.  Moreover,
\[
 K_\mu^2=r_1^2\Pi+L^2
 \le r_1^2\Pi+r_2L
 =r_2K_\mu+r_1(r_1-r_2)\Pi.
\]
Compress this inequality to $E$ and use
$PK_\mu^2P=B^2+C^*C$ to obtain \eqref{eq:mx-Cloewner}.
\end{proof}

\begin{proposition}\label{B_1}
Assume $\lambda_2(\mu)>2$. Then $P\phi\ne0$. Use the notation of Lemma~\ref{lem:mx-loewner} and rotate the Dirichlet-orthonormal basis of $E$ so that $e_1\in\mathbb{R}P\phi$.  Then 
\begin{equation}\label{eq:mx-p-range}
 	\frac32-\frac2{\lambda_2(\mu)}\leq B_{11}\leq\frac1{\lambda_1(\mu)},\quad \frac{B_{22}+B_{33}}{2}\leq\frac{1}{\lambda_2(\mu)}.
\end{equation}
Moreover, one has 
\begin{equation}\label{B_2}
	B_{11}-\frac{1}{2}\leq2\sqrt{\frac{2}{7}B_{11}\left(\frac{1}{\lambda_1}-B_{11}\right)}
		+\frac{8}{\sqrt{21}}
		\sqrt{
			\frac{B_{22}+B_{33}}{2}
			\left(\frac{1}{\lambda_2}-\frac{B_{22}+B_{33}}{2}\right)
		}
\end{equation}
\end{proposition}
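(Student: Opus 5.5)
We will use the Loewner bounds of Lemma~\ref{lem:mx-loewner} together with the directional estimate \eqref{(AH)_1} from the proof of Proposition~\ref{cor:mx-A-C}, applied with the fixed test matrix $H=\operatorname{diag}(\frac23,-\frac13,-\frac13)$ rather than $H=A$. Throughout we write $r_i=\lambda_i(\mu)^{-1}$ and $p=\|P\phi\|_D\le 1$. In the rotated basis, $P\phi\otimes P\phi=p^2\,e_1\otimes e_1$. We do not assume $B$ is diagonal in this basis; only diagonal entries of the various matrices will be used.

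\emph{Preliminaries.} First, $\lambda_1<\lambda_2$ holds, because $\lambda_1\le 2<\lambda_2$ by Hersch's inequality, so Lemma~\ref{lem:mx-loewner} applies. If $P\phi=0$, then \eqref{eq:mx-Bloewner} gives $B\le r_2 I$. Taking traces yields $\frac32\le 3r_2<\frac32$, a contradiction; hence $P\phi\neq0$.

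\emph{The range inequalities \eqref{eq:mx-p-range}.} Evaluating the upper bound in \eqref{eq:mx-Bloewner} on $e_2,e_3$ gives $B_{22},B_{33}\le r_2$, and hence $\frac{B_{22}+B_{33}}2\le r_2$. Evaluating it on $e_1$ gives $B_{11}\le r_2+(r_1-r_2)p^2\le r_1$. Then $\operatorname{tr}B=\frac32$ gives $B_{11}\ge\frac32-2r_2$. We also record the consequence of the lower bound in \eqref{eq:mx-Bloewner} on $e_1$, namely $r_1p^2\le B_{11}$; it is needed below.

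\emph{Inequality \eqref{B_2}.} For $H=\operatorname{diag}(\frac23,-\frac13,-\frac13)$ we have $\langle A,H\rangle_{\rm HS}=A_{11}=B_{11}-\frac12$, since $A$ is trace-free. We also have $\|H\|_{\rm HS}^2=\frac23$, $(H^2)_{11}=\frac49$ and $(H^2)_{22}=(H^2)_{33}=\frac19$. The weights in \eqref{(AH)_1} are therefore $\frac27$ for $i=1$ and $\frac4{21}$ for $i=2,3$. Hence
\[
B_{11}-\tfrac12\le 2\sqrt{\tfrac27 (C^*C)_{11}}+\tfrac{4}{\sqrt{21}}\Bigl(\sqrt{(C^*C)_{22}}+\sqrt{(C^*C)_{33}}\Bigr)
\le 2\sqrt{\tfrac27 (C^*C)_{11}}+\tfrac{8}{\sqrt{21}}\sqrt{\tfrac{(C^*C)_{22}+(C^*C)_{33}}{2}} .
\]
It remains to bound these diagonal entries using \eqref{eq:mx-Cloewner} and the elementary inequality $(B^2)_{ii}=\sum_j B_{ij}^2\ge B_{ii}^2$. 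For $i=1$ we get
\[
(C^*C)_{11}\le r_2B_{11}+r_1(r_1-r_2)p^2-B_{11}^2\le r_2B_{11}+(r_1-r_2)B_{11}-B_{11}^2=B_{11}(r_1-B_{11}),
\]
using $r_1p^2\le B_{11}$ and $r_1\ge r_2$. For $i=2,3$ the rank-one term vanishes, so $(C^*C)_{ii}\le r_2B_{ii}-B_{ii}^2$. Averaging and applying convexity of $x\mapsto x^2$ with $m=\frac{B_{22}+B_{33}}2$ gives
\[
\tfrac12\bigl((C^*C)_{22}+(C^*C)_{33}\bigr)\le r_2m-m^2 .
\]
Substituting these bounds yields \eqref{B_2}.

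The only delicate point is the bound on $(C^*C)_{11}$. There the positive term $r_1(r_1-r_2)p^2$ must be absorbed, and this works precisely because the lower Loewner bound controls $p^2$ by $\lambda_1 B_{11}$. All remaining steps are bookkeeping of diagonal entries, so no diagonalization of $B$ in the $P\phi$-adapted basis is required.
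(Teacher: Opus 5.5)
Your argument is correct and is essentially the paper's proof: the Loewner compressions of Lemma~\ref{lem:mx-loewner} give $P\phi\neq0$ and \eqref{eq:mx-p-range}. Then \eqref{(AH)_1} with $H=\operatorname{diag}(\frac23,-\frac13,-\frac13)$, the absorption of the rank-one term via $\lambda_1^{-1}\|P\phi\|_D^2\le B_{11}$, and a concavity/averaging step on the indices $2,3$ give \eqref{B_2}. You usefully make explicit that $\lambda_1\le 2<\lambda_2$, so the lemma applies. The paper in turn makes explicit a point you use tacitly: \eqref{(AH)_1} never used diagonality of $B$ and is rotation-covariant, so it holds in the $P\phi$-adapted basis.
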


\begin{proof} 
First, $P\phi\ne0$. Indeed, if $P\phi=0$, the upper bound in
\eqref{eq:mx-Bloewner} gives $B\le\lambda_2^{-1}I$, and hence
$\operatorname{tr}B\le3/\lambda_2<3/2$, contradicting
$\operatorname{tr}B=3/2$. We may therefore rotate the basis as stated.

The upper bound in \eqref{eq:mx-Bloewner} gives
$B_{22},B_{33}\le1/\lambda_2(\mu)$. Since $\operatorname{tr}B=3/2$,
\[
 B_{11}\ge\frac32-\frac2{\lambda_2(\mu)},
 \qquad
 \frac{B_{22}+B_{33}}2\le\frac1{\lambda_2(\mu)}.
\]
Also $B\le\lambda_1^{-1}I$, so $B_{11}\le1/\lambda_1(\mu)$. This proves
\eqref{eq:mx-p-range}.

Let
\[
 R:=\frac1{\lambda_2}B
 +\frac1{\lambda_1}\left(\frac1{\lambda_1}-\frac1{\lambda_2}\right)
 P\phi\otimes P\phi-B^2.
\]
By \eqref{eq:mx-Cloewner}, $C^*C\le R$. Since
$B_{11}\ge\|P\phi\|_D^2/\lambda_1$ and $(B^2)_{11}\ge B_{11}^2$,
\begin{equation}\label{eq:mx-R11}
 R_{11}\le B_{11}\left(\frac1{\lambda_1}-B_{11}\right).
\end{equation}
For $j=2,3$,
\begin{equation}\label{eq:mx-Rperp}
 R_{jj}\le B_{jj}\left(\frac1{\lambda_2}-B_{jj}\right).
\end{equation}

Take $H=\operatorname{diag}(2/3,-1/3,-1/3)$. Then
$\langle A,H\rangle_{\rm HS}=B_{11}-1/2$, and the componentwise estimate
\eqref{(AH)_1} gives (Estimate \eqref{(AH)_1} is rotationally covariant and hence remains valid after the present change of orthonormal coordinates)
\begin{equation*}
\begin{split}
 B_{11}-\frac12
 &\le 2\sqrt{\frac27(C^*C)_{11}}
 +\frac 4{\sqrt{21}} \left(\sqrt{(C^*C)_{22}}+\sqrt{(C^*C)_{33}}\right)\\
 &\le 2\sqrt{\frac27B_{11}\left(\frac1{\lambda_1}-B_{11}\right)}\\
 &\quad+\frac 4{\sqrt{21}}
 \left[\sqrt{B_{22}\left(\frac1{\lambda_2}-B_{22}\right)}
 +\sqrt{B_{33}\left(\frac1{\lambda_2}-B_{33}\right)}\right]\\
 &\le 2\sqrt{\frac27B_{11}\left(\frac1{\lambda_1}-B_{11}\right)}
 +\frac 8{\sqrt{21}}
 \sqrt{\frac{B_{22}+B_{33}}2
 \left(\frac1{\lambda_2}-\frac{B_{22}+B_{33}}2\right)},
\end{split}
\end{equation*}
which is \eqref{B_2}. 
\end{proof}

\begin{proof}[Proof of Theorem \ref{thm:uniform-directional-bound}]
First assume $\lambda_2(\mu)>2$. From \eqref{B_2}, we have
\begin{equation*}
 \frac1{\lambda_2}\ge
 \frac{B_{22}+B_{33}}2+
 \frac{21}{32(B_{22}+B_{33})}
 \left(B_{11}-\frac12-
 2\sqrt{\frac27B_{11}(\frac 1{\lambda_1}-B_{11})}\right)_+^2.
\end{equation*}
Let $t=B_{11}$, one has
$t\in [0,\frac 1{\lambda_1})$. Define, for any $a \ge \frac{1}{2}$ and  $t\in [0,a)$,
\begin{equation}\label{theta}
 \Theta_a(t)=
 \frac{21}{32(\frac32-t)}
 \left(t-\frac12-2\sqrt{\frac27t(a-t)}\right)_+^2
 +\frac{\frac32-t}{2}.
\end{equation}
Since $B_{22}+B_{33}=\frac 32 - B_{11}\geq 0$, the preceding inequality gives
$\lambda_2(\mu)^{-1}\ge\Theta_{\lambda_1(\mu)^{-1}}(B_{11})$, and taking the infimum over
$t \in [0, \lambda_1(\mu)^{-1})$ proves the theorem.

If $\lambda_2(\mu)\le2$, then $\lambda_2(\mu)^{-1}\ge\frac{1}{2}$. Hersch's bound $\lambda_1\le2$ gives
$\lambda_1(\mu)^{-1}\ge\frac{1}{2}$, hence 
\[ \inf_{[0,\lambda_1(\mu)^{-1})}\Theta_{\lambda_1(\mu)^{-1}}(t)\leq\Theta_{\lambda_1(\mu)^{-1}}(\frac{1}{2})=\frac{1}{2}\leq\lambda_2(\mu)^{-1}.\]
Thus the claimed inequality is proved.
\end{proof}

Here are some numerical values of the explicit bound. For $0<\lambda_1\le2$, define $\Xi(\lambda_1)=\inf_{t\in[0,\lambda_1^{-1})}\Theta_{\lambda_1^{-1}}(t)$.

\begin{table}[htbp]
	\begin{equation*}
		\begin{array}{cc|cc|cc}
			\lambda_1&\Xi(\lambda_1)^{-1}
			&\lambda_1&\Xi(\lambda_1)^{-1}
			&\lambda_1&\Xi(\lambda_1)^{-1}\\ \hline
			0.85&3.973881&1.25&2.668871&1.65&2.218477\\
			0.90&3.681223&1.30&2.591283&1.70&2.180762\\
			0.95&3.449119&1.35&2.521836&1.75&2.145608\\
			1.00&3.260142&1.40&2.459220&1.80&2.112743\\
			1.05&3.102973&1.45&2.402395&1.85&2.081935\\
			1.10&2.969945&1.50&2.350527&1.90&2.052986\\
			1.15&2.855675&1.55&2.302939&1.95&2.025724\\
			1.20&2.756277&1.60&2.259075&2.00&2.000000\\
			\hline
		\end{array}
	\end{equation*}
	\caption{Numerical results of Theorem \ref{thm:uniform-directional-bound}.}
\end{table}
Numerically, if $\lambda_1=0.85$, the bound gives $\lambda_2\lesssim3.973881$; for $\lambda_1=0.86$ it gives $\lambda_2\lesssim3.909361$. These values, when compared with Table~\ref{Numerical result of improved nad}, give the approximate location of the intersection of the green curve and the red curve in Figure \ref{fig:joint-region-intro}. %The numerical crossover with the horizontal value $4$ occurs near $\lambda_1\approx0.8460963$.

\section*{Declaration on the Use of Artificial Intelligence}

During the preparation of this manuscript, the authors used ChatGPT for language polishing and grammar checking. ChatGPT was  used to assist in generating the three numerical tables, which were independently verified by the authors via MATLAB.  ChatGPT was also used to assist with the heavy polynomial computations in Lemma 3.2, Lemma 3.4 and Lemma 4.3. The authors independently verified all computations, reviewed and edited all AI-assisted content, and take full responsibility for the content of the manuscript.

%\bibliographystyle{alpha}
%\bibliography{ref}

%\iffalse 

%\fi
\end{document}